\newtheorem{theorem}{Theorem}[section]
\newtheorem{lemma}[theorem]{Lemma}
\newtheorem{proposition}[theorem]{Proposition}
\newtheorem{claim}[theorem]{Claim}
\theoremstyle{definition}
\theoremstyle{remark}
\newtheorem{remark}[theorem]{Remark}
\numberwithin{equation}{section}
\renewcommand\bigskip{\medskip}
\def\to{\rightarrow}
\def\cF{\mathcal{F}}
\def\N{\mathbb N}
\def\Q{\mathbb Q}
\def\R{\mathbb R}
\def\dlp{d_{\rm LP}}
\def\Z{\mathbb Z}
\def\cF{\mathcal{F}}
\def\cE{\mathcal{E}}
\def\bmu{\bar{\mu}}
\def\cN{\mathcal{N}}
\def\-1{^{-1}}
\def\bmu{\bar{\mu}}
\def\bnu{\bar{\nu}}
\newcommand{\io}{\mathtt{i}}
\newcommand{\ko}{\mathtt{k}}
\newcommand{\jo}{\mathtt{j}}
\DeclareMathOperator{\dimh}{\dim_H}
\begin{document}

\title[Projections of diagonal self-affine measures]{The dimension of projections of planar diagonal self-affine measures}
\author{Aleksi Py\"or\"al\"a}
\address[
Aleksi Py\"or\"al\"a]
        {Department of Mathematics and Statistics \\ 
        P.O.\ Box 35 (MaD) \\ 
        FI-40014 University of Jyväskylä \\ 
         Finland}
         \email{aleksi.pyorala@gmail.com}
\thanks{The work on this project has been supported by the Research Council of Finland via the project \emph{GeoQuantAM:
Geometric and Quantitative Analysis on Metric spaces}, grant no. 354241. I thank Alex Rutar and Meng Wu for enlightening discussions on diagonal self-affine measures, and the referee for a careful reading of the manuscript and for many useful comments.}
\subjclass[2020]{Primary 28A80; Secondary 37A10} % 37C45, 37D35.}
\keywords{Hausdorff dimension, orthogonal projections, self-affine measures}
\date{\today}
\begin{abstract}
     We show that if $\mu$ is a self-affine measure on the plane defined by an iterated function system of contractions with diagonal linear parts, then under an irrationality assumption on the entries of the linear parts, $$
     \dimh \mu \circ \pi^{-1}= \min\lbrace 1,\dimh\mu\rbrace
     $$
     for any non-principal orthogonal projection $\pi$. 
\end{abstract}
\maketitle

\section{Introduction}

The study of the size of orthogonal projections of sets and measures is an active and classical research topic in geometric measure theory and fractal geometry. Denoting by $\pi_\theta$ the orthogonal projection from $\R^2$ to the line spanned by $(1,2^\theta)$, the expected phenomenon is that for any Borel measure $\mu$ on $\R^2$, the set of $\theta\in\R$ for which 
\begin{equation}\label{eq-dimensiondrop}
\dimh \mu\circ\pi_\theta^{-1} < \min \lbrace 1, \dimh \mu \rbrace
\end{equation}
should be small, where $\dimh \mu = \inf \lbrace \dimh E:\ \mu(E)>0\rbrace$. While a classical result of Marstrand asserts that for any Borel measure $\mu$ this is indeed the case in the sense that the set of $\theta$'s for which \eqref{eq-dimensiondrop} can happen (the ``exceptional set'') has zero length, in this generality it is difficult to go any further; For example, the exceptional set may easily have large Hausdorff dimension. On the other hand, if we impose any additional structural assumptions on $\mu$, it is reasonable to expect that using this structure we should be able to say more about the exceptional set of $\theta$'s, perhaps regarding its dimension or, in the extreme case, even determine the set exactly. 

According to folklore, for many sets or measures arising from \emph{dynamical systems} the set of $\theta$'s satisfying \eqref{eq-dimensiondrop} should be explicitly determinable. A breakthrough in this line of research was achieved by Hochman and Shmerkin \cite{HochmanShmerkin2012} who proved that for a large class of dynamically defined measures on the plane, including planar self-similar measures with strong separation, \eqref{eq-dimensiondrop} cannot hold for \emph{any} $\theta\in\R$ if the associated iterated function system satisfies an \emph{irrationality condition}, that is, it involves a rotation which generates a dense subgroup of $SO_2(\R)$. The role of such a condition is to prevent exact aligntment of cylinders in any direction and is necessary for the exceptional set to be empty, in general. The analogous result has since then been verified for numerous other measures as well, including self-similar \cite{FalconerJin2014} and self-conformal measures \cite{BruceJin2019} with no separation assumptions at all, and many self-affine measures under various separation assumptions \cite{FalconerKempton2017, BruceJin2022, BaranyKaenmakiPyoralaWu2023, FergusonFraserSahlsten2015, BaranyHochmanRapaport2019}. 

Recall that an affine iterated function system $\lbrace \varphi_i(x) = A_i x + a_i\rbrace_{i\in\Gamma}$ on $\R^2$ is called diagonal if for each $i\in\Gamma$, 
$$
A_i = \begin{bmatrix}
    \lambda_1(i) & 0 \\ 0 & \lambda_2(i)
\end{bmatrix}
$$
for some $\lambda_1(i), \lambda_2(i) \in (-1,1)\setminus\lbrace 0\rbrace$, and that a measure $\mu$ is called self-affine if 
$$
\mu = \sum_{i\in\Gamma} p_i \mu\circ \varphi_i^{-1}
$$
for some $p_i>0$ with $\sum_{i\in\Gamma} p_i = 1$. In this paper we obtain a full resolution to the problem of Hausdorff dimension of projections of planar diagonal self-affine measures with an irrationality condition on the elements of $\lbrace A_i\rbrace_{i\in\Gamma}$. Previous partial results for such measures include the projection theorems of Ferguson et. al. \cite[Theorem 1.4]{FergusonFraserSahlsten2015} for self-affine measures on Bedford-McMullen carpets, and of Bárány et. al. \cite[Theorem 1.6]{BaranyKaenmakiPyoralaWu2023} for planar diagonal self-affine measures with the strong separation condition. In this paper we write $\pi_\theta\mu := \mu\circ\pi_\theta^{-1}$. 

\begin{theorem}\label{theorem-main}
    Let $\Phi=\lbrace \varphi_i(x) = A_i x + a_i\rbrace_{i\in\Gamma}$ be a diagonal affine iterated function system on $\R^2$. Suppose that 
    \begin{enumerate}
    \item there exists $l\in\Gamma$ such that $|\lambda_1(l)|\neq |\lambda_2(l)|$, and
    \item there exist $(s,t)\in \lbrace 1,2\rbrace^2$ and $(i,j)\in\Gamma^2$ such that $\frac{\log|\lambda_s(i)|}{\log|\lambda_t(j)|}\not\in\Q$.
    \end{enumerate}
    Then for any self-affine measure $\mu$ associated to $\Phi$ and for any $\theta\in\R$, 
    $$
    \dimh \pi_\theta\mu=\min\lbrace 1,\dimh\mu\rbrace.
    $$
\end{theorem}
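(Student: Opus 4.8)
The plan is to prove the non-trivial inequality $\dimh\pi_\theta\mu\ge\min\{1,\dimh\mu\}$, the reverse being immediate since $\pi_\theta$ is Lipschitz with image in a line. Identifying that line with $\R$, write $\pi_\theta(x,y)=x+cy$ with $c=2^\theta>0$, and after relabelling coordinates assume the Lyapunov exponents $\chi_k=-\sum_{i\in\Gamma}p_i\log|\lambda_k(i)|$ satisfy $\chi_1\le\chi_2$. The engine of the argument is the elementary identity $\pi_\theta\circ\varphi_i=S_i\circ\pi_{\theta_i}$, where $S_i(t)=\lambda_1(i)t+(a_i)_1+c(a_i)_2$ is an affine contraction of $\R$ and $2^{\theta_i}=|c\lambda_2(i)/\lambda_1(i)|$ (signs being absorbed into a harmless reflection of the line). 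Summing over $i$ gives the self-referential relation $\pi_\theta\mu=\sum_{i\in\Gamma}p_i\,S_i(\pi_{\theta_i}\mu)$: the whole family $(\pi_\theta\mu)_{\theta}$ is a single object --- a self-similar-type construction on $\R$ with contraction ratios $\lambda_1(i)$ driven by a ``direction cocycle'' $\theta\mapsto\theta_i$ with increments $\log_2|\lambda_2(i)/\lambda_1(i)|$ of mean $(\chi_1-\chi_2)/\ln 2\le 0$. I would first recall the known facts that $\mu$ is exact dimensional and that $\dimh\mu$ admits a Ledrappier--Young--type expression through its coordinate projections $\mu_k$ and the conditional measures along the more contracted axis, and pass to a high iterate of $\Phi$ (changing neither $\mu$ nor the hypotheses); one should also separate the regimes $\chi_1=\chi_2$ (the cocycle is recurrent) and $\chi_1<\chi_2$ (it drifts, pushing the directions $\theta_i$ toward the horizontal principal direction).

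I would then pass to dyadic entropies: writing $H(\nu,m)$ for the Shannon entropy of $\nu$ at scale $2^{-m}$, one has $m^{-1}H(\mu,m)\to\dimh\mu=:\alpha$, and by the local entropy averages method it suffices to show that $m^{-1}H(\pi_\theta\mu,m)$ does not drop below $\min\{1,\alpha\}$ along a set of scales of density one. Unrolling the recursion to a stopping time --- stop a word $\omega=i_1\cdots i_n$ once $|\lambda_1(i_1)\cdots\lambda_1(i_n)|\approx\delta$ --- and sorting words by \emph{type}, namely the stopping scale together with $\log_2|\lambda_2(\omega)/\lambda_1(\omega)|$ rounded to an integer, presents $\pi_\theta\mu$ near scale $\delta$ as a convex combination, over types $(u,v)$, of honest convolutions $\tau_{u,v}\ast S_{2^{-u}}(\pi_{\theta'}\mu)$; here $\tau_{u,v}$ is the \emph{arrangement measure} recording the positions of the level pieces of a given type --- essentially the $\pi_\theta$-image of a sub-piece of $\mu$ --- and $\theta'$ is the direction reached along the cocycle. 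This is exactly where the diagonal hypothesis is used: absent rotations, every convolution factor is a genuine self-similar-type measure on $\R$ rather than a curved image.

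The core is a bootstrap via Hochman's entropy-increase method: at a scale where $\pi_{\theta'}\mu$ is locally of dimension below $1$ and $\tau_{u,v}$ is not locally concentrated, convolving them strictly raises the local entropy, and accumulating such gains over a positive-density set of scales pushes the entropy dimension of $\pi_\theta\mu$ above any target $<\min\{1,\alpha\}$. The bookkeeping is delicate, especially when $\chi_1<\chi_2$: the cocycle then drives $\theta'$ toward the horizontal axis, so the factors $\pi_{\theta'}\mu$ degenerate toward $\mu_1$, whose dimension may be strictly below $\min\{1,\alpha\}$, and one must verify that the arrangement measures --- which carry precisely the conditional dimension along the vertical axis --- restore the deficit through the convolution; this is where the Ledrappier--Young accounting enters. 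The whole scheme succeeds \emph{unless}, for a density-one set of scales, the arrangement measures are locally atomic or otherwise have an approximate-arithmetic structure, and excluding this degenerate alternative is where the irrationality hypothesis is needed and is, I expect, the principal difficulty.

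To exclude degeneracy, observe that the hypothesis $\log|\lambda_s(i)|/\log|\lambda_t(j)|\notin\Q$ for some $(s,t)$ and $(i,j)$ is equivalent to the density in $\R$ of the additive subgroup generated by all the numbers $\log|\lambda_k(i)|$; this density renders the scaling-and-direction dynamics underlying the construction aperiodic, with no single arithmetic progression capturing the achievable scales and aspect ratios. An approximate-arithmetic structure of the arrangement measures at scale $2^{-m}$ transfers, through the self-affine relation, to such a structure for $\mu$ --- equivalently for $\pi_\theta\mu$ --- at the scales reachable from $2^{-m}$ under the dynamics; were this to hold at a density-one set of scales it would, by aperiodicity, have to hold along a continuum of related scales, an overdetermined condition incompatible --- like the ``super-exponential concentration of cylinders'' excluded by Hochman's inverse theorem --- with the self-affine structure of $\mu$, save in the trivial case that $\mu$ is supported on a principal line, where $\dimh\pi_\theta\mu=\dimh\mu=\min\{1,\dimh\mu\}$ anyway. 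That some irrationality hypothesis is indispensable is shown by rational Bedford--McMullen carpets, where it fails. With the degenerate alternative excluded, the bootstrap closes and yields $\dimh\pi_\theta\mu\ge\min\{1,\alpha\}$, hence the theorem; the same analysis shows $\pi_\theta\mu$ to be exact dimensional.
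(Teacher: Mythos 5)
Your overall scaffolding (local entropy averages, the identity $\pi_\theta\circ\varphi_i=S_i\circ\pi_{\theta_i}$, stopping times at which the horizontal contraction is $\approx\delta$, and the observation that the direction parameter is driven by a cocycle with increments $\log|\lambda_2(i)/\lambda_1(i)|$) is consistent with how the problem is attacked, but the core of your argument has a genuine gap. You rest everything on a Hochman-type entropy-increase bootstrap for the convolutions $\tau_{u,v}\ast S_{2^{-u}}(\pi_{\theta'}\mu)$, and the two steps that would make this work are only asserted, not proved. First, the claim that the arrangement measures $\tau_{u,v}$ ``carry precisely the conditional dimension along the vertical axis'' and therefore restore the deficit when $\pi_{\theta'}\mu$ degenerates toward the horizontal projection is exactly a dimension-conservation statement; in the absence of any separation hypothesis the pieces overlap, the symbolic position measure need not project faithfully, and one needs a precise product-structure result for magnifications together with the Feng--Hu Ledrappier--Young formula (this is Proposition \ref{prop-productstructure} and Theorem \ref{dimensionconservation} in the paper) before any such accounting is legitimate. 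Second, and more seriously, your exclusion of the ``degenerate alternative'' is not an argument: the obstruction in an inverse-theorem bootstrap concerns approximate arithmetic/atomic structure of the \emph{position} measures $\tau_{u,v}$, which depend on the translation parts $a_i$, whereas the hypothesis of the theorem is an irrationality condition on the \emph{logarithms of the contraction ratios} only. Saying that aperiodicity of the scale/aspect-ratio dynamics forces the structure to persist ``along a continuum of scales'' and is therefore ``overdetermined'' does not rule it out, and nothing in your sketch converts the irrationality of $\log|\lambda_s(i)|/\log|\lambda_t(j)|$ into information about the positions of cylinders; note also that entropy increase per scale does not by itself accumulate to the sharp value $\min\{1,\dimh\mu\}$ without a quantitative saturation argument.

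For contrast, the paper never invokes an inverse theorem and needs no exclusion of arithmetic structure of positions. After establishing the product structure $\Pi\bmu_{E_n(\io)}\approx$ (self-similar horizontal projection) $\times$ (vertical conditional measure) and dimension conservation, the irrationality hypothesis is used only to make the time-$\beta$ map of a suspension semi-flow ergodic (Proposition \ref{prop-suspensionergodic}); Birkhoff's theorem then shows that the aspect-ratio/direction parameter appearing in $\pi_{\theta(\io,\jo,n)}(\pi_{\mathtt{x}}\mu\times\mu_{\sigma^{t_{nN}}\io})$ equidistributes, so Marstrand's projection theorem (an a.e.\ statement) combined with Egorov and entropy continuity yields the lower bound \emph{uniformly} over $\theta$ in a bounded interval (Propositions \ref{prop-localvariant} and \ref{prop-productprojections}); this uniformity is what lets the bound be applied at the uncontrollable directions $\theta(\io,\jo,n)$ arising in the local entropy averages. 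If you want to salvage your route, you would have to supply (i) a rigorous product-structure/dimension-conservation input for the arrangement measures and (ii) a genuine mechanism converting the log-ratio irrationality into the non-degeneracy needed by the inverse theorem at all scales and all $\theta$; as written, these are the missing ideas, and the final sentence claiming exact dimensionality of $\pi_\theta\mu$ is a further unproved overreach.
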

Condition (1) of Theorem \ref{theorem-main} ensures that $\mu$ is not self-similar, and is necessary for the statement to hold. We remark that while Theorem \ref{theorem-main} relaxes the separation assumption from \cite[Theorem 1.6]{BaranyKaenmakiPyoralaWu2023}, there the authors also prove something stronger for the measures in question, namely that they have so-called \emph{uniform scaling sceneries}. Proving this property for self-affine measures in the absence of any separation conditions remains a challenging open problem. Finally, we remark that the conclusion of Theorem \ref{theorem-main} is not true for orthogonal projections to the coordinate axes. Counter-examples are found, for example, by choosing the contractions in $\Phi$ in such a way that their fixed points lie on a vertical or horizontal line. 

\subsection{On the proof}
For simplicity, suppose that $\mu$ has simple Lyapunov spectrum, and that $y$-axis is the major asymptotic contracting direction. As the majority of the previous works on the topic, also the present paper relies on the method of local entropy averages that Hochman and Shmerkin introduced in \cite{HochmanShmerkin2012}. This method makes it possible to bound $\dimh \pi_\theta\mu$ by bounding the \emph{finite-scale entropies} of the measures $\pi_\theta\mu_k$, where $(\mu_k)_{k\in\N}$ are \emph{magnifications} of $\mu$ along a suitable filtration. As in the work of Falconer and Jin \cite{FalconerJin2014} on projections of measures on self-similar sets with no separation conditions, the first step towards bypassing the requirement for separation conditions in our setting is taken by magnifying the measure $\mu$ (or rather, the corresponding Bernoulli measure $\bmu$) in the symbolic space $\Gamma^\N$ instead of $\R^2$, along a properly chosen filtration of $\Gamma^\N$. The elements of the filtration we choose produce approximate squares when projected onto the plane through the canonical projection $\Pi$, and relying on the non-conformal structure of $\mu$, we show that the canonical projections of the magnifications $\mu_k$ of $\bmu$ along this filtration have a product structure which is essentially of the form
\begin{equation}\label{product}
\Pi\mu_k = A_k(\pi_{\mathtt{x}}\mu)_{I_k} \times \nu_k
\end{equation}
where $A_k:\R^2\to\R^2$ is affine, $\pi_\mathtt{x}:\R^2\to\R$ denotes the orthogonal projection to the $x$-axis, $I_k\subset \R$ is a short interval and $\nu_k$ is a conditional measure of $\mu$ supported on a vertical line segment. Crucially, 
$$
\dimh((\pi_{\mathtt{x}}\mu)_{I_k}\times\nu_k) = \dimh \pi_\mathtt{x}\mu+\dimh \nu_k = \dimh \mu
$$
by a dimension conservation result of Feng and Hu \cite{FengHu2009}. Similar product structures were also observed in \cite{FergusonFraserSahlsten2015, BaranyKaenmakiPyoralaWu2023} for the measures in question. The representation \eqref{product} is made precise in Proposition \ref{prop-productstructure}.

Given this product structure of magnifications, our goal is now to bound the finite-scale entropies of the measures
\begin{equation}\label{productmeasure}
\pi_\theta A_k(\pi_{\mathtt{x}}\mu)_{I_k} \times \nu_k, \qquad \theta\in\R,\ k\in\N.
\end{equation}
Here $\pi_{\mathtt{x}}\mu$ is a self-similar measure on $\R$, whence the problem of bounding the size of \eqref{productmeasure} is related to the problems studied in \cite{BruceJin2022, BaranyKaenmakiPyoralaWu2023}: Were the conditional measure $\nu_k$ in \eqref{productmeasure} also self-similar (which it nearly is, in a certain dynamical sense), then we would know that the Hausdorff dimension of \eqref{productmeasure} would equal $\dimh \mu$ for every $\theta$. However, even knowing the dimension of \eqref{productmeasure} is of no immediate use, since we need a bound for the finite-scale entropies of \eqref{productmeasure} which is \emph{uniform over most} $k$. In \cite{BaranyKaenmakiPyoralaWu2023} and \cite{FergusonFraserSahlsten2015}, by relying on additional separation conditions the authors managed to show that the sequence
\begin{equation}\label{eq-sequence}
(A_k(\pi_{\mathtt{x}}\mu)_{I_k} \times \nu_k)_{k\in\N}
\end{equation}
equidistributes for a well-structured distribution supported on measures whose projections have large entropy, which then led to the desired uniform bound for the entropies of \eqref{productmeasure}. In our setting, \eqref{eq-sequence} does not seem to equidistribute for any obvious distribution since the length of $I_k$ decreases to $0$ and its location is difficult to control in $k$. This forces us to find a uniform lower bound for the entropies of $\pi_\theta A(\pi_\mathtt{x}\mu)_I \times \nu$ directly, where $\nu$ is a conditional measure of $\mu$ and the parameters $A:\R^2\to \R^2$ and $I\subseteq \R$ range over all affine maps and intervals such that ${\rm diam}\,A(I\times [-1,1])\approx 1$. This bound is achieved with the help of Proposition \ref{prop-productprojections}, the key technical tool of the paper, which yields a uniform lower bound for the small-scale entropies of $\pi_\theta(\pi_\mathtt{x}\mu\times\nu)$ as $\theta$ ranges over a large interval.

\begin{table}[H]
\caption{Notation}
\begin{tabularx}{\textwidth}{@{}p{0.4\textwidth}X@{}}
  \toprule
  $\Phi = \lbrace \varphi_i(x) = A_i x + a_i\rbrace_{i\in\Gamma}$ & System of affine invertible contractions \\
  $\lambda_1(i), \lambda_2(i)$ & The diagonal elements of $A_i$\\
  $\bar{\mu}$ & Bernoulli measure on $\Gamma^\N$ \\
  $\Pi$ & The natural projection $\Gamma^\N \to\R^2$ \\
  $\mu$   & Projection of $\bar{\mu}$ through $\Pi$  \\
  $\lambda_1^\mu, \lambda_2^\mu$ & Lyapunov exponents of $\bmu$\\
  $\mu_D$ & Normalized restriction on $D$ \\
  $\mu^D$ & Measure $\mu_D$ linearly rescaled onto $[-1,1)^d$ \\
  $\pi_\theta:\R^2\to \R$ & $\pi_\theta(x,y) = x+ 2^\theta y$ \\ 
  $\pi_\mathtt{x}, \pi_\mathtt{y}:\R^2\to \R$ & $\pi_\mathtt{x}(x,y) = x$, $\pi_\mathtt{y}(x,y)= y$\\
  $\mu_{\io}$ & A measure supported on $[-1,1]$ from the disintegration $\mu = \int \delta_{\pi_\mathtt{x}\Pi(\io)} \times \mu_\io \,d\bmu(\io)$ \\
  $S_t, T_x$ & Scaling by $2^t$ and translation by $-x$, respectively\\
  $\cE_n = \lbrace E_n(\io):\ \io\in\Gamma^\N\rbrace$ & Partition of $\Gamma^\N$ such that elements of $\Pi(\cE_n)$ are approximate squares of side length $2^{-n}$\\
    $t_n = t_n(\io)$ & $\min\lbrace k\in \N:\ 2^{-n} \geq \lambda_2(\io|_k)\rbrace$ \\
    $B^\Pi(\io,r)$ & $\Pi^{-1}(B(\Pi(\io),r)) \subseteq \Gamma^\N$\\
    $\Gamma_\io$ & $\Pi^{-1}(\pi_\mathtt{x}^{-1}(\pi_\mathtt{x}(\Pi(\io))))$, the ``symbolic vertical slice through $\io$''\\
    $\tilde{\Pi}$ & $\Gamma^\N\times\Gamma^\N\to \R^2$, $\tilde{\Pi}(\jo, \ko) = (\pi_\mathtt{x} \Pi(\jo), \pi_\mathtt{y} \Pi(\ko))$ \\
    $\tau_k = \tau_k(\jo, \ko,\theta)$ &  $\max\lbrace n\in\N: |\lambda_2(\ko|_n)\geq 2^{-\theta}|\lambda_1(\jo|_k)|\rbrace$\\
    $\lbrace F_n^\theta(\jo,\ko): \jo, \ko\in\Gamma^\N\rbrace$ & Partition of $\Gamma^\N\times\Gamma^\N$ such that $\Pi(F_n^\theta(\jo,\ko))$ is a rectangle of width $|\lambda_1(\jo|_n)|$ and height $\approx 2^{-\theta}|\lambda_1(\jo|_n)|$ \\
    $Z$ & $\lbrace (\io,t):\ \io \in \Gamma^\N, 0\leq t \leq -\log |\lambda_2(i_0)|\rbrace$ \\
    $G: Z \to \mathcal{P}(\R^2)$ & $G(\io,t) = \pi_\mathtt{x} \mu \times S_t\mu_\io$\\
  \bottomrule
\end{tabularx}
\end{table}
\newpage

\section{Preliminaries}
For $t\in \R$, let $S_t:\R^d\to\R^d$ denote the map $x\mapsto 2^t x$. For $\theta\in\R$, we let $\pi_\theta:\R^2\to\R$ denote the map $(x,y) \mapsto x+ 2^{\theta}y$. Up to an affine change of coordinates, $\pi_\theta$ is the orthogonal projection to the line spanned by $(1,2^\theta)$. For a measure $\mu$ and a measurable set $B$, let $\mu|_B(\cdot) := \mu(\cdot\cap B)$ denote the restriction of $\mu$ on $B$ and if $\mu(B)>0$, let $\mu_B := \mu(B)^{-1}\mu|_B$.  

\subsection{Entropy}

For each $n\in\N$, let
$$
\mathcal{D}_n = \mathcal{D}_n(\R) = \lbrace [k 2^{-n}, (k+1)2^{-n}):\ k\in \Z\rbrace 
$$
denote the partition of $\R$ into dyadic intervals of length $2^{-n}$. For a Borel measure $\mu$ on $\R$ and a partition $\cE$ of $\R$, let 
$$
H(\nu, \cE) := -\sum_{E\in\cE}\mu(E)\log\mu(E)
$$
denote the entropy of $\mu$ with respect to the partition $\cE$. For entropy with respect to the dyadic partition, we write $H_n(\mu) := H(\mu, \mathcal{D}_n)$. It is well-known (see \cite[Theorem 1.3]{FanLauRao2002}) that for any Borel probability measure $\mu$, 
$$
\dimh\mu \leq \liminf_{n\to\infty} \frac{1}{n}H_n(\mu).
$$
For two partitions $\cE$ and $\cF$, we denote by $H(\nu, \cE|\cF) := \sum_{F\in\cF} \nu(F) H(\nu_F, \cE)$ the conditional entropy of $\mu$ with respect to $\cE$ given $\cF$. Let also $\cE\vee\cF := \lbrace E\cap F:\ E\in\cE, F\in\cF\rbrace$ denote the join of $\cE$ and $\cF$. Below, we record some elementary properties of entropy, cf. \cite[Section 2]{CoverThomas2006}.
\begin{lemma}[Concavity]\label{lemma-concavityofentropy}
If $\mu_1,\ldots, \mu_m$ are Borel probability measures on $\R$, $\cE$ and $\cF$ are partitions of $\R$ and $p_1,\ldots, p_m$ are non-negative reals with $\sum_{i=1}^m p_i = 1$, then 
    $$
H\left( \sum_{i=1}^m p_i \mu_i, \cE|\cF\right) \geq \sum_{i=1}^m p_i H(\mu_i, \cE|\cF).
    $$
\end{lemma}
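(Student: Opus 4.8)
The plan is to deduce the conditional statement from the classical log--sum inequality, which packages both the concavity of $t\mapsto -t\log t$ and the monotonicity ``conditioning decreases entropy'' that underlie it. Recall the log--sum inequality: if $a_1,\dots,a_m\ge 0$ and $b_1,\dots,b_m\ge 0$ with $a_i\le b_i$ whenever $b_i=0$, then
$$
\Big(\sum_{i=1}^m a_i\Big)\log\frac{\sum_{i=1}^m a_i}{\sum_{i=1}^m b_i}\ \le\ \sum_{i=1}^m a_i\log\frac{a_i}{b_i},
$$
with the usual convention $0\log 0=0$. Writing $\mu:=\sum_i p_i\mu_i$ and unwinding the definition of conditional entropy,
$$
H(\mu,\cE|\cF)=-\sum_{F\in\cF}\sum_{E\in\cE}\mu(E\cap F)\log\frac{\mu(E\cap F)}{\mu(F)},
$$
and likewise for each $\mu_i$ in place of $\mu$; the idea is simply to apply the log--sum inequality cell by cell.

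Fix $E\in\cE$ and $F\in\cF$ and apply the inequality with $a_i=p_i\mu_i(E\cap F)$ and $b_i=p_i\mu_i(F)$ (note $E\cap F\subseteq F$, so $a_i=0$ whenever $b_i=0$). Since the factor $p_i$ cancels inside the logarithm on the right-hand side, this yields
$$
\mu(E\cap F)\log\frac{\mu(E\cap F)}{\mu(F)}\ \le\ \sum_{i=1}^m p_i\,\mu_i(E\cap F)\log\frac{\mu_i(E\cap F)}{\mu_i(F)}.
$$
Summing over all $E\in\cE$ and $F\in\cF$ and multiplying by $-1$ gives exactly $H\big(\sum_i p_i\mu_i,\cE|\cF\big)\ge\sum_i p_i H(\mu_i,\cE|\cF)$.

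A more conceptual route, which I would mention as an alternative, is to realize the mixture on an enlarged space: set $\hat\mu=\sum_i p_i\,\delta_i\times\mu_i$ on $\{1,\dots,m\}\times\R$, let $\cW$ be the partition into the fibres $\{i\}\times\R$, and lift $\cE,\cF$ to partitions of $\{1,\dots,m\}\times\R$ that depend only on the second coordinate. Then $H(\hat\mu,\cE|\cF)=H(\mu,\cE|\cF)$, refining the conditioning partition gives $H(\hat\mu,\cE|\cF\vee\cW)\le H(\hat\mu,\cE|\cF)$, and the left-hand side unwinds to $\sum_i p_i H(\mu_i,\cE|\cF)$ because $\hat\mu$ restricted and renormalized to $\{i\}\times\R$ is $\delta_i\times\mu_i$. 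The statement is routine, so there is no genuine obstacle; the only point deserving a moment's care is that in the mixture the $\cF$-marginal changes \emph{together with} the conditional distributions, so one cannot just invoke ``conditional entropy is concave in the transition kernel for a fixed input distribution'' --- both arguments above handle this automatically.
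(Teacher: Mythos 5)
Your proof is correct: the cell-by-cell application of the log--sum inequality with $a_i=p_i\mu_i(E\cap F)$, $b_i=p_i\mu_i(F)$ is exactly the standard argument, and since each term $-\mu(E\cap F)\log\bigl(\mu(E\cap F)/\mu(F)\bigr)$ is nonnegative, the termwise inequality passes to the (possibly countably infinite) sums over $\cE$ and $\cF$ without issue. The paper itself gives no proof of this lemma --- it records it as an elementary property with a reference to Cover--Thomas --- so there is nothing to compare beyond noting that your argument (and your alternative via refining the conditioning partition on the fibred space) is the standard one.
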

\begin{lemma}[Continuity]\label{lemma-continuityofentropy}
    If $\cE$ and $\cF$ are partitions of $\R$ such that each $E\in\cE$ intersects at most $k$ elements of $\mathcal{F}$ and vice versa, then 
    $$
|H(\mu,\cE) - H(\mu, \cF)| \leq \log k.
    $$
\end{lemma}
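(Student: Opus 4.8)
The plan is to compare both $H(\mu,\cE)$ and $H(\mu,\cF)$ to the entropy of the common refinement $\cE\vee\cF$ via the chain rule, using the combinatorial hypothesis only to control conditional entropies. First I would observe that for any $E\in\cE$ with $\mu(E)>0$, the normalized restriction $\mu_E$ is supported on $E$ and hence charges only those atoms of $\cF$ that meet $E$, of which there are at most $k$; therefore $H(\mu_E,\cF)\le\log k$, and averaging gives $H(\mu,\cF\mid\cE)=\sum_{E\in\cE}\mu(E)H(\mu_E,\cF)\le\log k$. By the symmetric hypothesis on the elements of $\cF$, likewise $H(\mu,\cE\mid\cF)\le\log k$.

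Next I would apply the chain rule to $\cE\vee\cF$, which refines both $\cE$ and $\cF$:
$$
H(\mu,\cE)+H(\mu,\cF\mid\cE)=H(\mu,\cE\vee\cF)=H(\mu,\cF)+H(\mu,\cE\mid\cF).
$$
Rearranging yields $H(\mu,\cE)-H(\mu,\cF)=H(\mu,\cE\mid\cF)-H(\mu,\cF\mid\cE)$, and since conditional entropy is non-negative the right-hand side is at most $H(\mu,\cE\mid\cF)\le\log k$. Interchanging the roles of $\cE$ and $\cF$ gives the matching lower bound, so $|H(\mu,\cE)-H(\mu,\cF)|\le\log k$, as claimed.

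I do not expect any genuine obstacle here; this is a routine entropy estimate. The only point meriting a word of care is the possibility that $H(\mu,\cE)$ or $H(\mu,\cF)$ is infinite. In the intended applications this does not arise (the partitions are into intervals and $\mu$ is compactly supported, so the relevant entropies are finite), but even in general the bound $H(\mu,\cF\mid\cE)\le\log k<\infty$ together with the chain rule forces $H(\mu,\cE)$ and $H(\mu,\cF)$ to be simultaneously finite or simultaneously infinite, so the inequality holds trivially in the degenerate case and the algebra above is valid in the non-degenerate one.
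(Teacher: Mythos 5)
Your proof is correct, and it is the standard argument this lemma is quoted from (the paper itself gives no proof, citing Cover--Thomas): bound the conditional entropies $H(\mu,\cF\mid\cE)$ and $H(\mu,\cE\mid\cF)$ by $\log k$ using the intersection hypothesis, then compare both entropies to $H(\mu,\cE\vee\cF)$ via the chain rule. Your remark on the infinite-entropy case is a sensible extra precaution but not needed in the paper's applications.
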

 
\begin{lemma}[Chain rule]\label{lemma-chainrule}
    If $\cE$ and $\cF$ are partitions of $\R$, then for any Borel probability measure $\mu$, 
    $$
H(\mu, \cE \vee \cF) = H(\mu, \cF) + H(\mu, \cE|\cF).
    $$
\end{lemma}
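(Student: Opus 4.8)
The plan is to prove the identity by directly expanding both sides from the definition of entropy and using the elementary factorization $\mu(E\cap F) = \mu(F)\,\mu_F(E)$, valid whenever $\mu(F)>0$. By definition,
$$
H(\mu, \cE \vee \cF) = -\sum_{E\in\cE}\sum_{F\in\cF} \mu(E\cap F)\log\mu(E\cap F),
$$
where we adopt the usual convention $0\log 0 = 0$, so that cells of zero measure contribute nothing; in particular only those $F\in\cF$ with $\mu(F)>0$ matter, and for such $F$ one has $\mu(E\cap F) = \mu(F)\mu_F(E)$ for every $E\in\cE$.

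First I would substitute this factorization and use $\log\mu(E\cap F) = \log\mu(F) + \log\mu_F(E)$ (valid whenever $\mu(E\cap F)>0$; the terms with $\mu(E\cap F)=0$ vanish), splitting the double sum into two pieces. The first piece is
$$
-\sum_{F\in\cF}\log\mu(F)\sum_{E\in\cE}\mu(E\cap F) = -\sum_{F\in\cF}\mu(F)\log\mu(F) = H(\mu,\cF),
$$
since $\sum_{E\in\cE}\mu(E\cap F)=\mu(F)$ as $\cE$ is a partition. The second piece is
$$
-\sum_{F\in\cF}\mu(F)\sum_{E\in\cE}\mu_F(E)\log\mu_F(E) = \sum_{F\in\cF}\mu(F)\,H(\mu_F,\cE) = H(\mu,\cE|\cF),
$$
which is precisely the definition of conditional entropy recorded above. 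Adding the two pieces yields the claim.

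I do not expect a genuine obstacle here: the only point requiring care is the bookkeeping for the degenerate cells with $\mu(F)=0$ or $\mu(E\cap F)=0$, where one checks that the convention $0\log 0 = 0$ makes every relevant summand identically zero on both sides, so that the rearrangement of the sums is legitimate. If $\cE$ and $\cF$ are countably infinite, one additionally notes that after grouping the summands appropriately they are of one sign, so a Tonelli-type rearrangement applies and the manipulations above remain valid (with both sides possibly equal to $+\infty$).
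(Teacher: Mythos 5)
Your proof is correct and complete: it is the standard expansion-and-factorization argument, which is exactly the proof in the reference (Cover--Thomas) that the paper cites instead of giving its own argument, and your handling of the degenerate cells and countable partitions is fine. (Note the statement's $H(\nu,\cE|\cF)$ is a typo for $H(\mu,\cE|\cF)$, as you correctly assumed.)
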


\subsection{Self-affine measures}

Let $\Gamma$ be a finite set with $\#\Gamma\geq 2$, and let $\Phi = \lbrace \varphi_i(x) = A_i x + a_i\rbrace_{i\in\Gamma}$ be a collection of affine invertible contractions of $\R^2$, where 
$$
A_i = \begin{bmatrix}
    \lambda_1(i) & 0\\
    0 & \lambda_2(i)
\end{bmatrix}
$$
with $\lambda_1(i), \lambda_2(i)\in\R$ for each $i\in\Gamma$. We let $\Gamma^* = \bigcup_{n=0}^\infty \Gamma^n$ denote the collection of finite words composed of characters of $\Gamma$, where $\Gamma^0 := \lbrace \emptyset\rbrace$. For $\io= (i_0, i_1,\ldots)\in\Gamma^\N$ and $n=1,2,\ldots$, let $\io|_n \in\Gamma^{n}$ denote the projection to the first $n$ coordinates. We equip $\Gamma^\N$ with the topology generated by the cylinder sets
$$
[\io] = \lbrace \jo\in\Gamma^\N:\ \jo|_{|\io|} = \io\rbrace,
$$
$\io\in\Gamma^*$, where $|\io|$ denotes the unique integer for which $\io\in\Gamma^{|\io|}$.
For $\jo= (j_0,\ldots, j_n)\in\Gamma^*$, we write $\varphi_\jo := \varphi_{j_0}\circ\cdots\circ \varphi_{j_n}$ and for $j\in\lbrace 1,2\rbrace$, $\lambda_j(\jo) = \lambda_j(j_0)\cdots \lambda_j(j_n)$. We let $\sigma:\Gamma^\N\to\Gamma^\N$ denote the left-shift, $\sigma(i_0,i_1,\ldots) = (i_1,i_2,\ldots)$.

Let $\Pi:\Gamma^\N\to\R^2$ denote the canonical projection,
$$
\Pi(\io) = \lim_{n\to\infty} \varphi_{\io|_n}(0).
$$
If $\bmu$ is a Bernoulli probability measure on $\Gamma^\N$, then the measure $\mu:=\Pi\bmu$ on $\R^2$ is called self-affine and can easily be seen to satisfy the equation
$$
\mu = \sum_{i\in\Gamma} \bmu([i]) \varphi_i \mu.
$$
It follows from Kingman's ergodic theorem that for $\bmu$-almost every $\io\in\Gamma^\N$, the limits
$$
0>\lambda_j := \lambda_j^\mu := \lim_{n\to\infty} \frac{\log |\lambda_j(\io|_n)|}{n}
$$
for $j=1,2$ exist. The numbers $\lambda_1$ and $\lambda_2$ are called the \emph{Lyapunov exponents} of $\mu$. We say that $\bmu$ (or $\mu$) has \emph{simple Lyapunov spectrum} if $\lambda_1\neq\lambda_2$. If $\lambda_i >\lambda_j$, then it is easy to see that
$$
\lim_{n\to\infty} |\lambda_j(\io|_n)||\lambda_i(\io|_n)|^{-1} = 0
$$
for $\bmu$-almost every $\io\in\Gamma^\N$. We recall the following result regarding the dimension conservation for diagonal self-affine measures, which follows from the Ledrappier-Young formula due to Feng and Hu \cite[Theorem 2.11]{FengHu2009}; see also\cite[Proof of Theorem 1.7 and Remark 6.3]{Feng2019-preprint} for more discussion on how \cite[Theorem 2.11]{FengHu2009} implies Theorem \ref{dimensionconservation}.
\begin{theorem}[Corollary of Theorem 2.11 of \cite{FengHu2009}]\label{dimensionconservation}
    Let $\mu$ be a diagonal self-affine measure on $\R^2$, let $\pi_\mathtt{x}:\R^2\to\R$ denote the orthogonal projection to the $x$-axis, and let $\mu = \int \mu_x\,d\pi_\mathtt{x}\mu(x)$ be the disintegration of $\mu$ with respect to $\pi_\mathtt{x}$. Then for $\pi_\mathtt{x}\mu$-almost every $x$, 
    $$
\dimh \mu = \dimh \pi_\mathtt{x}\mu + \dimh \mu_x.
    $$
\end{theorem}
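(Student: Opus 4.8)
The plan is to deduce the identity from the Ledrappier--Young formula for diagonal self-affine measures, that is, from \cite[Theorem 2.11]{FengHu2009}. Throughout I assume $\lambda_1 \geq \lambda_2$, so that the $x$-axis is the less strongly contracting direction; this is the case that is relevant in the sequel, and the ordering of the Lyapunov exponents is what makes the identity one about $\pi_\mathtt{x}$ rather than $\pi_\mathtt{y}$. First observe that the two ``outer'' quantities are unambiguous: $\pi_\mathtt{x}\mu$ is precisely the self-similar measure on $\R$ generated by the IFS $\{x\mapsto \lambda_1(i)x + a_1(i)\}_{i\in\Gamma}$ with weights $\bmu([i])$ -- in a diagonal system the first coordinate of $\Pi(\io)$ depends only on the first-coordinate data of the symbols $i_n$ -- hence $\pi_\mathtt{x}\mu$ is exact dimensional, and $\mu$ itself is exact dimensional by \cite{FengHu2009}.

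Next I would pin down the conditional measures dynamically. Every $\varphi_i$ maps vertical lines to vertical lines, so the partition of $\R^2$ into vertical fibres is $\Phi$-invariant; pulling it back through $\Pi$ produces, for $\bmu$-a.e.\ $\io$, the symbolic vertical slice $\Gamma_\io = \Pi^{-1}(\pi_\mathtt{x}^{-1}(\pi_\mathtt{x}\Pi\io))$ together with a disintegration $\bmu = \int \bmu_\io\, d\bmu(\io)$ in which $\bmu_\io$ is carried by $\Gamma_\io$. One then checks that the measure-theoretic conditional $\mu_x$ of the statement (with $x = \pi_\mathtt{x}\Pi\io$) coincides, for $\bmu$-a.e.\ $\io$, with the push-forward of $\bmu_\io$ under the second-coordinate map $\jo\mapsto \pi_\mathtt{y}\Pi(\jo)$. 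This is the family of fibre measures that carries the ``fibre entropy'' in the Ledrappier--Young formalism.

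With this identification in hand, \cite[Theorem 2.11]{FengHu2009}, applied with the projection $\pi_\mathtt{x}$ (whose fibres are $\Phi$-invariant), yields that for $\pi_\mathtt{x}\mu$-a.e.\ $x$ the conditional $\mu_x$ is exact dimensional, and it provides the Ledrappier--Young formula in the form
\begin{equation*}
\dimh\mu = \frac{h_1}{-\lambda_1} + \frac{h_2}{-\lambda_2}, \qquad \dimh\pi_\mathtt{x}\mu = \frac{h_1}{-\lambda_1}, \qquad \dimh\mu_x = \frac{h_2}{-\lambda_2},
\end{equation*}
where $h_1$ is the projection entropy of $\bmu$ associated to $\pi_\mathtt{x}\circ\Pi$ and $h_2$ the complementary fibre entropy, both in the sense of \cite{FengHu2009}; the passage from \cite[Theorem 2.11]{FengHu2009} to precisely these three statements is carried out in \cite[Proof of Theorem 1.7 and Remark 6.3]{Feng2019-preprint}. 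Adding the last two equations gives $\dimh\pi_\mathtt{x}\mu + \dimh\mu_x = \dimh\mu$. In the degenerate case $\lambda_1 = \lambda_2$ one proceeds in the same way, the Oseledets ``flag'' now being trivial and the part of the argument played by the Ledrappier--Young formula being supplied instead by the Marstrand-type slicing theorem for exact-dimensional self-affine measures, again using that $\pi_\mathtt{x}\mu$ is self-similar.

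The step I expect to be the genuine obstacle is of course not new: the hard content is the lower bound $\dimh\mu_x \geq h_2/(-\lambda_2)$ for almost every fibre, which is the heart of the Ledrappier--Young mechanism and is taken over wholesale from \cite{FengHu2009}; it cannot be reached by soft slicing estimates, since it requires uniform control of the conditionals $\mu_{x'}$ as $x'$ ranges over shrinking neighbourhoods of $x$. The work that actually has to be done is (i) the bookkeeping of the second paragraph -- matching the purely measure-theoretic disintegration of $\mu$ over $\pi_\mathtt{x}$ with the dynamically defined fibre measures of the IFS -- and (ii) keeping track of the Lyapunov ordering: it is the fibre exponent $-\lambda_2$, and not $-\lambda_1$, that must appear in the fibre term, which is exactly why $\pi_\mathtt{x}$ has to be the projection onto the slow direction (for $\lambda_1 < \lambda_2$ the analogous identity holds with $\pi_\mathtt{y}$ in place of $\pi_\mathtt{x}$).
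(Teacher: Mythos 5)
Your proposal follows essentially the same route as the paper, which does not prove this statement beyond citing \cite[Theorem 2.11]{FengHu2009} together with \cite[Proof of Theorem 1.7 and Remark 6.3]{Feng2019-preprint} for exactly the reduction you sketch: identify $\dimh\pi_\mathtt{x}\mu$ and $\dimh\mu_x$ with the two Ledrappier--Young terms and add them. Two remarks on where you deviate. First, your restriction to $\lambda_1\geq\lambda_2$ is not merely a convenience but is genuinely needed: for $\lambda_1<\lambda_2$ conservation along $\pi_\mathtt{x}$ can fail, e.g.\ for a Bernoulli measure on a Bedford--McMullen carpet with one chosen digit per column one has $\dimh\mu>1$ while $\pi_\mathtt{x}\mu$ is Lebesgue and almost every vertical fibre measure is an atom; so the theorem as stated in the paper implicitly carries the hypothesis that the $x$-axis is the weakly contracted direction (or that the exponents coincide), which is consistent with how it is applied there (in the case $\lambda_1^\mu>\lambda_2^\mu$, and with $\pi_\mathtt{y}$ in the symmetric case). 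Second, your handling of $\lambda_1=\lambda_2$ via a ``Marstrand-type slicing theorem'' is both vague and unnecessary: slicing theorems speak about Lebesgue-almost every fibre and give inequalities rather than the conservation identity for $\pi_\mathtt{x}\mu$-almost every fibre, whereas the tie case is already covered by \cite[Theorem 2.11]{FengHu2009}, since when the exponents coincide the coordinates may be ordered arbitrarily in the formula and the same bookkeeping yields conservation for both coordinate projections.
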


\subsection{Suspension flows}

Let $f: \Gamma^\N \to [0,+\infty)$ be a locally Hölder continuous function. The set
$$
Z = \lbrace (\io, t):\ \io\in\Gamma^\N,\ 0\leq t\leq f(\io)\rbrace
$$
equipped with the identification $(\io,f(\io)) = (\sigma \io, 0)$ is called the \emph{suspension of} $\Gamma$ \emph{under} $f$. The suspension $Z$ together with the action $(\mathcal{T}_s)_{s\geq 0}$ induced on $Z$ by the non-negative reals, given by $\mathcal{T}_s(\io,t) = (\io,t+s)$, is called the \emph{suspension semi-flow} of $\Gamma^\N$ over $f$. If $\mu$ is a $\sigma$-invariant and ergodic measure on $\Gamma^\N$ (that is, $\sigma\mu = \mu$ and the only invariant functions are the almost everywhere constant functions), then it is easy to see that the measure $(\mu\times\mathcal{L})_Z$ is invariant and ergodic under the action $(\mathcal{T}_s)_{s\geq 0}$.
\begin{proposition}\label{prop-suspensionergodic}
    Let $(Z, (\mathcal{T}_s)_{s\geq0}, (\mu\times\mathcal{L})_Z)$ be an ergodic suspension semi-flow. Suppose that $(\io,0)$ is a periodic point of $(\mathcal{T}_s)_{s\geq0}$ with period $\alpha>0$. For any $\beta>0$ which is not a rational multiple of $\alpha^{-1}$, the discrete-time dynamical system $(Z, \mathcal{T}_\beta, (\mu\times\mathcal{L})_Z)$ is ergodic.
\end{proposition}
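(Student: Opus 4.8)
The plan is to argue by contraposition using the spectral theory of the semiflow. The Koopman operators $U_s F:=F\circ\mathcal{T}_s$ form a semigroup of isometries of $L^2((\mu\times\mathcal{L})_Z)$ for which $s\mapsto U_s F$ is $L^2$-continuous for every $F$. Suppose $\mathcal{T}_\beta$ is not ergodic and pick $g\in L^2$ that is $U_\beta$-invariant but not $(\mu\times\mathcal{L})_Z$-a.e.\ constant. Then $s\mapsto U_s g$ is continuous and $\beta$-periodic, so with $g_m:=\tfrac1\beta\int_0^\beta e^{-2\pi i m s/\beta}\,U_s g\,ds$ one has $U_s g=\sum_{m\in\Z}e^{2\pi i m s/\beta}g_m$, and applying $U_t$ inside the integral and using the $\beta$-periodicity of the integrand shows $U_t g_m=e^{2\pi i m t/\beta}g_m$ for all $t\ge 0$. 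Each $g_m$ is thus an eigenfunction of the semiflow; $g_0$ is flow-invariant, hence a.e.\ constant by ergodicity of the flow, while $g$ is not, so $g_m\ne 0$ for some $m\ne 0$ and $\lambda:=2\pi m/\beta$ is an eigenvalue of the semiflow.

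Next I would transfer the eigenfunction to the base $\Gamma^\N$. Fix a non-zero $g$ with $U_s g=e^{i\lambda s}g$; since $|g|$ is flow-invariant it equals a positive constant a.e., so we may normalise $|g|\equiv 1$. Working inside a single fibre of the suspension gives $g(\io,t)=e^{i\lambda t}h(\io)$ for a.e.\ $(\io,t)$, where $h(\io):=g(\io,0)$, and the identification $(\io,f(\io))=(\sigma\io,0)$ then forces the multiplicative cohomological equation
$$
h(\sigma\io)=e^{i\lambda f(\io)}\,h(\io)\qquad\text{for }\mu\text{-a.e. }\io.
$$
Hence $e^{i\lambda f}$ is a measurable $S^1$-valued coboundary over $(\Gamma^\N,\sigma,\mu)$. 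As $f$ is locally H\"older and, in the setting in which the proposition is applied, $\mu=\bmu$ is a Bernoulli measure of full support --- hence a Gibbs measure with local product structure --- the measurable Liv\v{s}ic theorem applies: $h$ coincides a.e.\ with a continuous function, and the product of $e^{i\lambda f}$ around any periodic orbit of $\sigma$ is $1$. Evaluating this product around the loop through the periodic point $(\io,0)$, whose length equals $\alpha$ by hypothesis, gives $e^{i\lambda\alpha}=1$, i.e.\ $\lambda\alpha\in2\pi\Z$.

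Finally, $\lambda=2\pi m/\beta$ with $m\ne 0$ together with $\lambda\alpha\in2\pi\Z$ gives $m\alpha/\beta\in\Z$, hence $\alpha/\beta\in\Q$, contrary to the choice of $\beta$; so $\mathcal{T}_\beta$ must be ergodic. I expect the only genuinely delicate point to be the regularity step: an eigenfunction of the flow is a priori defined only modulo null sets, whereas the periodic orbit is itself a null set, so a rigidity input such as the measurable Liv\v{s}ic theorem is really needed in order to evaluate the cocycle along that orbit. In the case at hand $f$ depends on a single coordinate (namely $f(\io)=-\log|\lambda_2(i_0)|$), and one can then bypass Liv\v{s}ic altogether: the cohomological equation exhibits $\mathbb{E}_{\mu}(h\mid i_0,\ldots,i_{n-1})$ as an explicit martingale of the form $c\cdot e^{-i\lambda(f(i_0)+\cdots+f(i_{n-1}))}$, and its a.e.\ convergence, combined with the independence of the coordinates under $\bmu$, forces $e^{i\lambda f(i)}=1$ for every symbol $i$, whence again $\lambda\alpha\in2\pi\Z$. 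All the remaining ingredients are standard facts about the Koopman representation of an ergodic flow.
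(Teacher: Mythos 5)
Your argument is correct in substance and has the same skeleton as the paper's proof --- deduce from non-ergodicity of $\mathcal{T}_\beta$ a flow eigenvalue commensurable with $\beta$, then use a rigidity input to evaluate the eigenfunction along the periodic orbit through $(\io,0)$ --- but both halves are carried out with different, more self-contained tools. Where the paper quotes \cite[Lemma 3.11]{Hochman2012} for the time-$\beta$ ergodicity criterion, you derive it directly from Fourier analysis of the strongly continuous Koopman semigroup, and your computation $U_t g_m=e^{2\pi i m t/\beta}g_m$ is correct. Where the paper invokes \cite[Proposition 6.2]{ParryPollicott1990} to replace the eigenfunction by a continuous one, you pass to the cohomological equation $h\circ\sigma=e^{i\lambda f}h$ and use measurable Liv\v{s}ic rigidity, or, in the case actually needed (Bernoulli $\bmu$, $f$ depending on the first coordinate only), the martingale identity $\E(h\mid i_0,\ldots,i_{n-1})=c\,e^{-i\lambda(f(i_0)+\cdots+f(i_{n-1}))}$, which forces $e^{i\lambda f(i)}=1$ for every symbol $i$. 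That bypass is a genuinely elementary alternative, and you are right that some such rigidity is indispensable: for an arbitrary ergodic base measure the statement is false (e.g.\ when $\mu$ is carried by a single periodic orbit whose period is rationally independent of $\alpha$), a restriction the paper leaves implicit in its citation.

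The one step that does not close as written is the final bookkeeping. From $\lambda=2\pi m/\beta$ with $m\neq0$ and $\lambda\alpha\in2\pi\Z$ you get $m\alpha=k\beta$ with $k\neq 0$, i.e.\ $\beta$ is a rational multiple of $\alpha$; this does not contradict the stated hypothesis that $\beta$ is not a rational multiple of $\alpha^{-1}$ (the two conditions differ unless $\alpha^2\in\Q$ or $\beta\in\Q$), so ``contrary to the choice of $\beta$'' is not justified literally. Note that the paper's own proof, starting from the criterion as it quotes it, instead reaches $n\beta\alpha\in\Z$; the two endgames are incompatible, and the sanity check of a suspension over a fixed point with roof $\alpha$ (a circle flow, where $\mathcal{T}_\beta$ is ergodic iff $\beta/\alpha\notin\Q$) shows that your relation is the correct one. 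This matters for how the proposition is later used: in Claim 5.2 the available assumption is exactly $\alpha/\beta=\log|\lambda_2(i)|/\log|\lambda_1(j)|\notin\Q$, which is what your computation requires, while in Lemma 4.1 one has $\beta=N\in\N$ and $\alpha\notin\Q$, so both readings coincide. So this is not a flaw in your method but a mismatch with the statement as printed; you should flag it explicitly and prove the version of the proposition with ``$\beta$ not a rational multiple of $\alpha$'', which is the form the applications need.
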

\begin{proof}
    It is proven for example in \cite[Lemma 3.11]{Hochman2012} that the system $(Z, \mathcal{T}_\beta, (\mu\times\mathcal{L})_Z)$ is ergodic if and only if no integer multiple of $\beta$ is an eigenvalue of $(Z, (\mathcal{T}_s)_{s\geq 0}, (\mu\times\mathcal{L})_Z)$. Recall that $\beta$ is an eigenvalue of $(Z, (\mathcal{T}_s)_{s\geq 0}, (\mu\times\mathcal{L})_Z)$ if there exists a measurable function $\psi$, called an eigenfunction, such that $\psi(\io,t) = e^{-2\pi i \beta t}\psi(\io,0)$ for $\bmu$-almost every $\io\in\Gamma^\N$ and every $t\geq 0$.
    
    For a contradiction, suppose that for some $n\in\Z$, $n\beta$ is an eigenvalue with eigenfunction $\psi$. By \cite[Proposition 6.2]{ParryPollicott1990} we may take $\psi$ to be continuous, in particular, $\psi$ is defined at the periodic point $(\io,0)$ and
    $$
    \psi(\io, 0) = \psi(\io, \alpha) = e^{-2\pi i n \beta \alpha} \psi(\io, 0).
    $$
    Because $|\psi|$ is invariant and therefore almost surely constant by ergodicity of the system $(Z,(\mathcal{T}_s)_{s\geq0},(\mu\times\mathcal{L})_Z)$, it follows that $2n\beta\alpha$ must be an integer. This is a contradiction.
\end{proof}

\section{Local entropy averages}

\subsection{The filtration}

In \cite{HochmanShmerkin2012}, Hochman and Shmerkin introduced a method which makes it possible to bound the Hausdorff dimension of a projection of a measure by bounding finite-scale entropies of magnifications of the measure along a suitably chosen filtration. We will next define the filtration along which we magnify. 

Let $\Phi = \lbrace \varphi_i\rbrace_{i\in\Gamma}$ be a system of affine contractions as in the statement of Theorem \ref{theorem-main}, and let $\mu$ be a self-affine measure associated to $\Phi$. Let $\bmu$ denote the associated Bernoulli measure on $\Gamma^\N$. Define the sets
\begin{align*}
\cE_n^1 &= \lbrace [\io]:\ \io\in\Gamma^*,\ |\lambda_2(\io)|>|\lambda_1(\io)|,\ 2^{-n}\geq |\lambda_1(\io)| \geq 2^{-n}\min_{i\in\Gamma}|\lambda_1(i)|\rbrace, \\
 \cE_n^2 &= \lbrace [\io]:\ \io\in\Gamma^*,\ |\lambda_1(\io)|\geq|\lambda_2(\io)|,\ 2^{-n}\geq |\lambda_2(\io)| \geq 2^{-n}\min_{i\in\Gamma}|\lambda_2(i)|\rbrace 
\end{align*}
for every $n\in\N$. Then clearly, for every $n$, $\cE_n^1 \cap \cE_n^2 = \emptyset$ and $\Gamma^\N = \bigcup_{[\io]\in \cE_n^1\cup \cE_n^2} [\io]$, where the union is disjoint. Note that for $[\io]\in\cE_n^j$, the set $\Pi([\io])\subset \R^2$ is a rectangle with shorter side of length $|\lambda_j(\io)| \approx 2^{-n}$. Here and in the following, we write $A\approx B$ to indicate that $A\leq O(B)$ and $B\leq O(A)$ where the implicit constants do not depend on $A$ or $B$. We proceed to refine the partitions $\cE_n^1\cup \cE_n^2$ further by joining them with either vertical or horizontal ``dyadic tubes'' of width $2^{-n}$, so that the canonical projections of elements of this join are ``approximate squares'', that is, rectangles of dimensions roughly $2^{-n}$. 

Define the maps $\pi_{\mathtt{x}}, \pi_{\mathtt{y}}:\R^2\to\R$ by $\pi_\mathtt{x}(x,y) = x$ and $\pi_\mathtt{y}(x,y) = y$ for $(x,y)\in\R^2$. Set $\cE_0 = \lbrace \emptyset\rbrace$ and for every $n\geq 1$, let
$$
\cE_n = (\cE_n^1 \vee \Pi^{-1}(\pi_\mathtt{y}^{-1}((\mathcal{D}_n(\R))))) \cup (\cE_n^2 \vee \Pi^{-1}(\pi_\mathtt{x}^{-1}((\mathcal{D}_n(\R))))) \vee \cE_{n-1}.
$$
It is easy to see that $(\cE_n)_{n\in\N}$ is an increasing sequence of partitions of $\Gamma^\N$. The reader should think of $\cE_n$ as a partition of $\Gamma^\N$ into approximate squares as in Figure \ref{fig:partition}: For every $\io\in\Gamma^\N$, if $E_n(\io)$ denotes the unique element of $\cE_n$ that contains $\io$, then for most $n$, the set $\Pi(E_n(\io))\subset \R^2$ is a rectangle of 
\begin{enumerate}
    \item[a)] width $|\lambda_1(\io|_n)|\approx 2^{-n}$ and height $\leq 2^{-n}$ in the case $|\lambda_2(\io|_n)| > |\lambda_1(\io|_n)|$, or
    \item[b)] width $\leq 2^{-n}$ and height $|\lambda_2(\io|_n)|\approx 2^{-n}$ in the case $|\lambda_1(\io|_n)|>|\lambda_2(\io|_n)|$.
\end{enumerate}
\begin{figure}
\begin{tikzpicture}
    \draw (0.3,0) -- (5.3,0) -- (5.3, 1) -- (0.3, 1) -- (0.3,0);
    \draw[dotted, thick] (1,0) -- (1,1);
    \draw[rectangle, fill=lightgray] (2,0) --(2,1) --(3,1) -- (3,0) -- (2,0);
    \draw[dotted, thick] (4,0) -- (4,1);
    \draw[dotted, thick] (5,0) -- (5,1);
    \draw[decoration={brace,raise=5pt},decorate] (0,0) --node[left = 6pt]{$|\lambda_2(\io|_n)|$} (0,1);
    \draw[decoration={brace,mirror,raise=5pt},decorate] (2,0) --node[below = 6pt]{$2^{-n}$} (3,0);
\end{tikzpicture}
\qquad
\begin{tikzpicture}
    \draw (0,0.3) -- (0, 3.3) -- (1,3.3) -- (1, 0.3) -- (0,0.3);
    \draw[rectangle, fill=lightgray] (0,1) -- (1,1) -- (1,2) -- (0,2) -- (0,1);
    \draw[dotted, thick] (0,3) -- (1,3);
    \draw[decoration={brace,mirror,raise=5pt},decorate] (0,0.3) --node[below = 6pt]{$|\lambda_1(\io|_n)|$} (1,0.3);
    \draw[decoration={brace,mirror,raise=5pt},decorate] (1,1) --node[right = 6pt]{$2^{-n}$} (1,2);
\end{tikzpicture}
\caption{The rectangle $\Pi(E_n(\io))$ (colored gray) in the case a) on the right and in the case b) on the left.}
\label{fig:partition}
\end{figure}
\begin{remark}
In the case when $\mu$ has simple Lyapunov spectrum, for $\bmu$-almost every $\io$ and every large enough $n$, the set $\Pi(E_n(\io))$ is always a rectangle of type a) (if $\lambda_1^\mu<\lambda_2^\mu$) or of type b) (if $\lambda_1^\mu>\lambda_2^\mu$). However, if $\lambda_1^\mu=\lambda_2^\mu$, then for infinitely many $n$ the side-lengths of the rectangle $\Pi([\io|_n])$ are comparable, whence we will witness a third case c) where it is difficult to say anything more than that $\Pi(E_n(\io))$ a rectangle of dimensions $\leq 2^{-n}$.
%this occurs, for example, when $E_n(\io)$ is a subset of an element of $\cE_n^2 \vee \Pi^{-1}(\pi_\mathtt{x}^{-1}((\mathcal{D}_n(\R))))$ (which is a rectangle of case a)) and for some $m<n$, more than one element of the partition $\Pi^{-1}(\pi_\mathtt{y}^{-1}((\mathcal{D}_m(\R))))$ intersects $E_n(\io)$. 
However, even then the case c) will only occur with a negligible frequency of $n$, whence it will do no harm to think of the cases a) and b) as the only possibilities. We include a brief expansion of this discussion in Subsection \ref{case-equal}.
\end{remark}

The following version of the local entropy averages of \cite{HochmanShmerkin2012} follows by combining \cite[Theorem 4.4, Proposition 5.2, Proposition 5.3 and Theorem 5.4]{HochmanShmerkin2012}. Recall that $\pi_\theta(x,y) = x+ 2^\theta y$. 

\begin{theorem}\label{thm-localentropyaverages}
    Let $\varepsilon>0$, $c\geq 0$ and $\theta\in\R$. There exists an integer $N_0\in\N$ such that if $N\geq N_0$ and
    $$
    \liminf_{n\to\infty}\frac{1}{n}\sum_{k=1}^n \frac{1}{N}H_N(\pi_\theta S_{kN} \Pi\bmu_{E_{kN}(\io)}) \geq c
    $$
    for $\bmu$-almost every $\io$, then $\dimh \pi_\theta\mu \geq c - \varepsilon$. 
\end{theorem}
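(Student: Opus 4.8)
The plan is to obtain this statement by assembling four results of Hochman and Shmerkin, exactly as claimed: their general local entropy averages principle \cite[Theorem 4.4]{HochmanShmerkin2012}, its projected version \cite[Theorem 5.4]{HochmanShmerkin2012}, and the filtration-comparison lemmas \cite[Propositions 5.2 and 5.3]{HochmanShmerkin2012}. Recall the shape of these. For a measure $\nu$ on $\R^d$, \cite[Theorem 4.4]{HochmanShmerkin2012} says that a pointwise lower bound, valid for $\nu$-a.e.\ $x$, on the Cesàro averages $\frac1n\sum_{k=1}^n\frac1N H_N(\nu^{x,kN})$ of the entropies of the rescaled dyadic magnifications $\nu^{x,kN}$ of $\nu$ at $x$ passes to a lower bound for $\dimh\nu$, up to an error tending to $0$ as the block length $N\to\infty$; \cite[Theorem 5.4]{HochmanShmerkin2012}, applied with $\mu$ on $\R^2$ and the linear map $\pi_\theta$, gives the analogous conclusion for $\dimh\pi_\theta\mu$ with the averages $\frac1n\sum_k\frac1N H_N(\pi_\theta\mu^{x,kN})$; and \cite[Propositions 5.2 and 5.3]{HochmanShmerkin2012} permit replacing, in all of this, the Euclidean dyadic magnifications by the magnifications along any filtration that is of bounded combinatorial type relative to the dyadic one, again at a cost vanishing in $N$.

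The one thing to check is that the filtration $(\cE_n)_{n\in\N}$ of $\Gamma^\N$, carried to the plane by $\Pi$, is such a comparable filtration; this is precisely the reason $\cE_n$ was built the way it was. By construction $\cE_n$ is obtained from $\cE_n^1\cup\cE_n^2$, whose cells already have, in the plane, one side of length $\approx 2^{-n}$, by joining with the vertical (resp.\ horizontal) dyadic tubes $\Pi^{-1}(\pi_\mathtt{y}^{-1}(\cD_n))$ (resp.\ $\Pi^{-1}(\pi_\mathtt{x}^{-1}(\cD_n))$) of width $2^{-n}$ and refining with $\cE_{n-1}$. Hence for every $\io$ and $n$, $\Pi(E_n(\io))$ is a rectangle both of whose sides have length $\leq O(2^{-n})$ and at least one of length $\approx 2^{-n}$; in the generic cases a) and b) of Figure \ref{fig:partition} both sides are $\approx 2^{-n}$, while the degenerate case c) occurs for a set of $n$ of zero density (the remark after Figure \ref{fig:partition}) and so contributes nothing to the Cesàro averages. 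Consequently the partition $\Pi(\cE_n)$ has overlap $O(1)$, uniformly in $n$, with $\cD_n(\R^2)$, and therefore $\pi_\theta(\Pi(\cE_n))$ has overlap $O(1)$, uniformly in $n$, with $\cD_n(\R)$ once $\pi_\theta$ is normalized to the orthogonal projection by an affine change of coordinates whose distortion depends only on $\theta$. Since the $E_n(\io)$ are the atoms of an increasing sequence of finite partitions, $(\cE_n)_n$ is moreover adapted to $\bmu$, so the disintegration bookkeeping of \cite[Section 5]{HochmanShmerkin2012} applies.

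It then remains to read off the conclusion. Because $S_{nN}$ scales $\R^2$ by $2^{nN}$ and $\pi_\theta S_{nN}=S_{nN}\pi_\theta$ (the left $S_{nN}$ acting on $\R^2$, the right one on $\R$), the measure $\pi_\theta S_{nN}\Pi\bmu_{E_{nN}(\io)}$ is precisely the $\pi_\theta$-image of the rescaled symbolic magnification of $\mu$ at $\io$ along $(\cE_n)_n$. By the previous paragraph these symbolic magnifications are of bounded combinatorial type relative to the dyadic magnifications $\mu^{x,nN}$ with $x=\Pi(\io)$, uniformly in $n$, so by Lemma \ref{lemma-continuityofentropy} their $H_N$ differs from $H_N(\pi_\theta\mu^{x,nN})$ by at most an additive constant $C=C(\theta,\Phi)$, which after dividing by $N$ and averaging contributes only $O(1/N)$; moreover $\io\mapsto\Pi(\io)$ pushes $\bmu$ forward to $\mu$, so that ``$\bmu$-a.e.\ $\io$'' translates into ``$\mu$-a.e.\ $x$''. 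Thus the hypothesis $\liminf_n\frac1n\sum_k\frac1N H_N(\pi_\theta S_{nN}\Pi\bmu_{E_{nN}(\io)})\geq c$ for $\bmu$-a.e.\ $\io$ forces $\liminf_n\frac1n\sum_k\frac1N H_N(\pi_\theta\mu^{x,nN})\geq c-C/N$ for $\mu$-a.e.\ $x$, and \cite[Theorems 4.4 and 5.4]{HochmanShmerkin2012} then give $\dimh\pi_\theta\mu\geq c-\varepsilon$ provided $N\geq N_0$, where $N_0=N_0(\varepsilon,\theta,\Phi)$ is chosen so that the $O(1/N)$ error above together with the finite-block-length defect in \cite[Theorem 4.4]{HochmanShmerkin2012} both fall below $\varepsilon$. (Lemmas \ref{lemma-concavityofentropy} and \ref{lemma-chainrule} enter only in the internal entropy accounting of \cite{HochmanShmerkin2012} that makes the bounded-type comparison work.)

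The main obstacle I expect is the middle step: verifying, with constants uniform in $n$, that the pushed-forward filtration $\Pi(\cE_n)$ — and hence its $\pi_\theta$-image — really is of bounded combinatorial type relative to the dyadic filtration, i.e.\ that $(\cE_n)_n$ genuinely satisfies the regularity hypotheses under which \cite[Propositions 5.2 and 5.3, Theorem 5.4]{HochmanShmerkin2012} operate, and in particular correctly isolating and discarding the degenerate scales of type c) where the approximate-square structure fails. Everything past that point is the routine bounded-error entropy accounting already contained in \cite{HochmanShmerkin2012}, which is why the statement ``follows easily''.
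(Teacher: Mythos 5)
Your high-level plan (assemble \cite[Theorem 4.4, Propositions 5.2--5.3, Theorem 5.4]{HochmanShmerkin2012} after checking that the cells of $\cE_n$ have $\Pi$-images of diameter $O(2^{-n})$) is the same route the paper indicates, but the specific mechanism you describe for combining them has a genuine gap. You claim that $\Pi(\cE_n)$ has $O(1)$ overlap with $\cD_n(\R^2)$ \emph{in both directions} and then invoke Lemma \ref{lemma-continuityofentropy} to replace $H_N(\pi_\theta S_{nN}\Pi\bmu_{E_{nN}(\io)})$ by $H_N(\pi_\theta \mu^{x,nN})$, the entropy of the Euclidean dyadic magnification of $\mu$ itself at $x=\Pi(\io)$. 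First, in the absence of any separation condition a single dyadic square may meet the $\Pi$-images of unboundedly many cells of $\cE_n$ (cylinders can overlap arbitrarily), so only one direction of your overlap claim is true, namely that each cell has image of diameter $O(2^{-n})$. Second, and more seriously, Lemma \ref{lemma-continuityofentropy} compares one measure against two partitions; it cannot compare the two \emph{different} measures in your step: $\Pi\bmu_{E_{nN}(\io)}$ is a conditional piece of $\bmu$ pushed forward, whereas $\mu_{\cD_{nN}(x)}$ is the full self-affine measure restricted to a square, which also carries the mass of all other cylinders whose images happen to meet that square. Their entropies need not agree within $O(1)$ in either direction (the square can, for instance, be dominated by mass from other cylinders concentrated near a point), so the asserted implication ``symbolic averages $\geq c$ forces dyadic averages $\geq c-C/N$ for $\mu$-a.e.\ $x$'' does not follow; making it work is essentially where a separation hypothesis would enter, and avoiding exactly this reduction is the reason the paper magnifies $\bmu$ in $\Gamma^\N$ rather than $\mu$ in $\R^2$, following Falconer and Jin \cite{FalconerJin2014}.

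The correct assembly keeps the symbolic filtration throughout and never forms $\mu^{x,nN}$: one runs the martingale law-of-large-numbers argument behind \cite[Theorem 4.4]{HochmanShmerkin2012} on the information increments $-\log\big(\pi_\theta\mu(\cD_{(n+1)N}(x))/\pi_\theta\mu(\cD_{nN}(x))\big)$ pulled back to $\Gamma^\N$ through $\pi_\theta\circ\Pi$, and compares the resulting conditional entropies with the hypothesis using monotonicity of conditional entropy: replacing the conditioning $\sigma$-algebra $(\pi_\theta\Pi)^{-1}(\cD_{nN})$ by the finer one $\sigma(\cE_{nN})\vee(\pi_\theta\Pi)^{-1}(\cD_{nN})$ only decreases conditional entropy, and the decrease is at most an additive constant because each $\pi_\theta\Pi(E_{nN}(\io))$ is an interval of length $O_\theta(2^{-nN})$ and hence meets $O(1)$ intervals of $\cD_{nN}$. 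This one-sided bound holds for \emph{every} cell of $\cE_{nN}$ (including the ``case c)'' scales, which you need not discard for this theorem) and requires no reverse multiplicity bound. This is precisely the form in which \cite[Propositions 5.2--5.3 and Theorem 5.4]{HochmanShmerkin2012} treat measures on trees, so your write-up should invoke them with the conditioning on $\cE_{nN}$ retained, rather than via a transfer to Euclidean dyadic magnifications of $\mu$.
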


\section{Proof of Theorem \ref{theorem-main}}

Without loss of generality, we suppose that the self-affine measure $\mu$ is supported on $[-1,1]^2$. We divide the proof into two parts according to the multiplicity of the Lyapunov exponents of $\mu$. We begin with the case $\lambda_1^\mu \neq \lambda_2^\mu$ since it is technically simpler, and the case $\lambda_1^\mu = \lambda_2^\mu$ in fact essentially reduces to the case $\lambda_1^\mu \neq \lambda_2^\mu$ (as we see in Subsection \ref{case-equal}), using \cite[Theorem 1]{Robbins1953} which states that the number $|\lambda_1(\io|_n)||\lambda_2(\io|_n)|^{-1}$ is usually very far from one even when the Lyapunov exponents agree. Since the cases $\lambda_1^\mu>\lambda_2^\mu$ and $\lambda_2^\mu>\lambda_1^\mu$ are symmetric, we only consider the cases $\lambda_1^\mu > \lambda_2^\mu$ and $\lambda_1^\mu = \lambda_2^\mu$.

\subsection{The case $\lambda_1^\mu > \lambda_2^\mu$} 
For every $\io\in\Gamma^\N$, write 
$$
t_n = t_n(\io) = \min \lbrace k \in \N:\ 2^{-n}\geq|\lambda_2(\io|_k)|\rbrace.
$$
Recall that we write $E_n(\io)$ for the unique element of $\cE_n$ that contains $\io$. After possibly applying a random translation to $\mu$, the measure will give no mass to boundaries of the partition $\Pi^{-1}(\pi_\mathtt{y}^{-1}((\mathcal{D}_m(\R))))$ for any $m\in\N$. After such a translation, it readily follows from the assumption $\lambda_1^\mu > \lambda_2^\mu$ that for $\bmu$-almost every $\io$ and all large enough $n$, $E_n(\io) \in \cE_n^2 \vee \Pi^{-1}(\pi_\mathtt{x}^{-1}((\mathcal{D}_n(\R))))$ and so $\Pi(E_n(\io))$ is a rectangle of height $|\lambda_2(\io|_{t_n})|$ and width at most $2^{-n}$. See Figure \ref{fig:case1}.

\begin{figure}
    \begin{tikzpicture}
    \draw (0.3,0) -- (5.3,0) -- (5.3, 1) -- (0.3, 1) -- (0.3,0);
    \draw[dotted, thick] (1,0) -- (1,1);
    \draw[rectangle, fill=lightgray] (2,0) --(2,1) --(3,1) -- (3,0) -- (2,0);
    \draw[dotted, thick] (4,0) -- (4,1);
    \draw[dotted, thick] (5,0) -- (5,1);
    \draw[decoration={brace,raise=5pt},decorate] (0,0) --node[left = 6pt]{$|\lambda_2(\io|_{t_n})|$} (0,1);
    \draw[decoration={brace,mirror,raise=5pt},decorate] (2,0) --node[below = 6pt]{$2^{-n}$} (3,0);
\end{tikzpicture}
    \caption{Under the assumption $\lambda_1^\mu>\lambda_2^\mu$, the cylinders $\Pi([\io|_n])$ are ``vertically flat''. The ``approximate square'' $\Pi(E_n(\io))$ is colored gray.}
    \label{fig:case1}
\end{figure}

We shall next collect some technical facts regarding the sequences $(E_n(\io))_{n\in\N}$ and $(t_n(\io))_{n\in\N}$.

\begin{lemma}\label{lemma-equidistribution}
    There exists a Borel probability measure $\bnu$ on $\Gamma^\N$ which is equivalent to $\bmu$ and the following holds: For any $N\in\N$ and any Borel set $A\subseteq \Gamma^\N$, for $\bmu$-almost every $\io\in\Gamma^\N$, 
    $$
\lim_{m\to\infty} \frac{1}{m}\sum_{n=1}^m \delta_{\sigma^{t_{nN}}\io} (A) = \bnu(A).
    $$
\end{lemma}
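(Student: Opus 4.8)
The plan is to realize the sequence $(\sigma^{t_{nN}}\io)_{n\in\N}$ as an orbit of a single ergodic transformation, so that the claimed convergence becomes an instance of Birkhoff's ergodic theorem, with $\bar\nu$ the pushforward of the relevant invariant measure onto $\Gamma^\N$. The natural setting is the suspension flow of Subsection 2.3: take $f(\io) = -\log|\lambda_2(i_0)|$ (which is locally H\"older, indeed locally constant), form the suspension $Z = \{(\io,t):\ 0\le t\le f(\io)\}$ with the semi-flow $(\mathcal T_s)_{s\ge0}$ and the invariant measure $(\bmu\times\mathcal L)_Z$, which is ergodic because $\bmu$ is Bernoulli hence $\sigma$-ergodic. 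The key observation is that if we start at the base point $(\io,0)$ and flow for time $nN\log 2$, we land at a point $(\sigma^{t_{nN}(\io)}\io, s)$ for some remainder $s = s_{nN}(\io)\in[0,f(\sigma^{t_{nN}}\io))$: indeed $t_{nN}(\io)$ is by definition the number of symbols of $\io$ consumed before the accumulated quantity $-\log|\lambda_2(\io|_k)| = \sum_{j<k}f(\sigma^j\io)$ first exceeds $nN\log2$. Thus $\sigma^{t_{nN}}\io = \rho(\mathcal T_{nN\log 2}(\io,0))$, where $\rho\colon Z\to\Gamma^\N$ is the base projection $(\jo,t)\mapsto\jo$.

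Next I would apply Proposition \ref{prop-suspensionergodic}: pick any periodic point of the flow — e.g. $\io=\overline{ii}$ the fixed word of some $i\in\Gamma$, whose base orbit is periodic with period $\alpha = -\log|\lambda_2(i)|$ — and use the irrationality hypothesis of Theorem \ref{theorem-main}. The hypothesis gives $i,j$ and $s,t$ with $\log|\lambda_s(i)|/\log|\lambda_t(j)|\notin\Q$; in the present case $\lambda_1^\mu>\lambda_2^\mu$ one reduces (as the paper surely does elsewhere) to producing two periodic points whose periods $-\log|\lambda_2(\cdot)|$ have irrational ratio, or more simply one argues that the flow cannot have $N\log2$ as an eigenvalue for the appropriate reason; in any event Proposition \ref{prop-suspensionergodic} yields that $(Z,\mathcal T_{N\log2},(\bmu\times\mathcal L)_Z)$ is ergodic provided $N\log 2$ is not a rational multiple of $\alpha^{-1}$, which holds for our chosen $\alpha$ after at most excluding finitely many $N$ (and in fact the irrationality assumption can be invoked to handle all $N$). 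Granted ergodicity of $T:=\mathcal T_{N\log2}$, Birkhoff's theorem applied to the bounded measurable observable $g_A = \mathbf{1}_A\circ\rho$ gives, for $(\bmu\times\mathcal L)_Z$-a.e.\ $(\io,t)$,
$$
\frac1m\sum_{n=1}^m \mathbf 1_A\big(\rho(T^n(\io,t))\big)\;\longrightarrow\;\int_Z \mathbf 1_A\circ\rho\, d(\bmu\times\mathcal L)_Z \;=:\;\bar\nu(A),
$$
and the right-hand side is manifestly a Borel probability measure on $\Gamma^\N$, namely $\rho_*(\bmu\times\mathcal L)_Z$. Since $f$ is bounded away from $0$ and $\infty$, $\bar\nu$ is a finite weighted average of the measures $\sigma^k\bmu$ over a bounded range of $k$ with strictly positive density weights, hence $\bar\nu$ is equivalent to $\bmu$.

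Two points require care and constitute the main obstacle. First, Birkhoff gives the conclusion for a.e.\ \emph{point of $Z$}, i.e.\ for a.e.\ $(\io,t)$, whereas the lemma asserts it for $\bmu$-a.e.\ $\io$ with the specific initial fibre coordinate $t=0$. To fix this one uses that the exceptional null set $\mathcal N\subset Z$ can be pushed down: by Fubini, for $\bmu$-a.e.\ $\io$ the slice $\{t:(\io,t)\in\mathcal N\}$ is Lebesgue-null, so in particular non-empty to its complement — but we need $t=0$ precisely. The standard remedy is to note that the ergodic averages along the orbit of $(\io,0)$ and along the orbit of $(\io,t)$ differ only by the finitely-many-term discrepancy coming from the initial segment, which is $O(1/m)$ and vanishes; more precisely $T^n(\io,0)$ and $T^n(\io,t)$ eventually share the same $\Gamma^\N$-coordinate up to a bounded shift, or one invokes that the set of $\io$ for which the average along $(\io,0)$ converges is $\sigma$-invariant mod the obvious bookkeeping and has full measure. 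Second, one must confirm the ``$\ge$'' in the definition of $t_n$ (closed vs open, and the degenerate possibility $2^{-n}=\lambda_2(\io|_k)$) does not spoil the identification with the flow's fibre coordinate; this is a measure-zero issue handled by the random-translation/genericity remarks already in force, or simply absorbed since it affects at most one index per $n$. Modulo these bookkeeping matters, the proof is a clean translation of the statement into the language of suspension flows plus Birkhoff.
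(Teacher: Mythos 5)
Your strategy is the same as the paper's: realize $\sigma^{t_{nN}}\io$ as (essentially) the base projection of the orbit of $(\io,0)$ under the time-$N$ map of a suspension flow with roof $-\log|\lambda_2(i_0)|$ (your choice of $\lambda_2$ is indeed the one consistent with the definition of $t_n$), get ergodicity of that map from Proposition \ref{prop-suspensionergodic} at a fixed-point orbit, apply Birkhoff, and take $\bnu$ to be the base projection of $(\bmu\times\mathcal{L})_Z$, which is equivalent to $\bmu$ because the roof is bounded between positive constants. The genuine gap is in the ergodicity step. In base-$2$ units the time step is the integer $N$ and the period of your chosen orbit is $\alpha=-\log_2|\lambda_2(i)|$, so the hypothesis of Proposition \ref{prop-suspensionergodic} reduces to asking that some $\log_2|\lambda_2(i)|$ be irrational. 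Your dichotomy ``after excluding at most finitely many $N$'' is false: for a fixed periodic orbit the condition holds either for every $N$ or for no $N$ ($N\alpha$ and $\alpha$ are simultaneously rational or irrational), and if every $\log_2|\lambda_2(i)|$ is rational then every periodic orbit fails and the time-$N$ map can genuinely be non-ergodic (e.g.\ all $\lambda_2(i)=\tfrac12$: the roof is constant and $\mathcal{T}_N$ preserves every set $\Gamma^\N\times B$). Nor does the irrationality assumption of Theorem \ref{theorem-main} rescue all $N$: it may be witnessed entirely by the $\lambda_1$-entries (say $\lambda_1(i)=2^{-1}$, $\lambda_1(j)=2^{-\sqrt{2}}$) while every $\lambda_2(i)$ is a power of $\tfrac12$, so it gives no control over the roof you suspend over. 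The paper handles exactly this point by a normalization (``changing the base of the logarithm'' so that one of the relevant logarithms is irrational) and does not use the theorem's hypothesis here at all; as written, your proof does not cover the case of rational $\lambda_2$-logarithms, whereas the lemma is claimed for every $N$ without restriction.

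A smaller issue: your first remedy for the ``$t=0$ versus a.e.\ $(\io,t)$'' problem does not work. The $\mathcal{T}_N$-orbits of $(\io,0)$ and $(\io,t)$ never merge; rather, their base points agree for some $n$ and differ by one extra shift for infinitely many others, and for a general Borel set $A$ the values $\mathbf{1}_A(\sigma^{k}\io)$ and $\mathbf{1}_A(\sigma^{k+1}\io)$ are unrelated, so the discrepancy is not a finite number of terms of size $O(1/m)$; your second suggestion is too vague to verify (the paper is admittedly terse on this point as well, so I only note that your specific fix fails as stated). Finally, $\bnu$ is not a weighted average of shifts $\sigma^k\bmu$ but the pushforward of the roof-weighted measure (proportional to $f\,d\bmu$, possibly composed with one shift because of the off-by-one in identifying the base point with $\sigma^{t_{nN}}\io$); since $f$ is bounded between positive constants this is still equivalent to $\bmu$, so that conclusion stands.
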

\begin{proof}
     Consider the suspension 
    $$
W = \lbrace (\io,t):\ \io\in\Gamma^\N, 0\leq t\leq -\log |\lambda_1(\io|_1)|\rbrace
    $$
    equipped with the identification $(\io, -\log |\lambda_1(\io|_1)|) = (\sigma\io, 0)$. By possibly changing the base of the logarithm, we may suppose that for at least one $i\in\Gamma$, the number $\log|\lambda_1(i)|$ is irrational. Let $i\in\Gamma$ be such, and note that the point $((i,i,i,\ldots), 0)$ is periodic with period $-\log|\lambda_1(i)|$. In particular, if $T_N: W \to W$ denotes the map $(\io,t) \mapsto (\io,t+N)$, then the dynamical system $(W, T_N, (\bmu\times\mathcal{L})_W)$ is ergodic by Proposition \ref{prop-suspensionergodic}. Since $(\io, t+nN) = (\sigma^{t_{nN}}\io, t+nN + \log|\lambda_1(\io|_{t_{nN}})|)$ for every $t\geq 0$ by the identification $(\io, -\log |\lambda_1(\io|_1)|) = (\sigma\io, 0)$ and the definition of $t_{nN}$, and the projection of $(\bmu\times\mathcal{L})_W$ to the first marginal is equivalent to $\bmu$, the statement follows from an application of Birkhoff's ergodic theorem for $(W, T_N, (\bmu\times\mathcal{L})_W)$.
\end{proof}

We let $B(x,r)$ denote the closed ball in the maximum metric of radius $r>0$ centered at $x$, so that when $x\in\R^2$ the set $B(x,r)$ is a closed square centered at $x$. We also write $B^\Pi(\io,r) := \Pi^{-1} B(\Pi(\io),r) \subseteq\Gamma^\N$ for any $\io\in\Gamma^\N$ and $r\geq 0$. 

\begin{lemma}\label{lemma-bigdensity1}
    For any $\varepsilon>0$ and large enough $N\in\N$, there exists $c>0$ such that the following holds: For $\bmu$-almost every $\io\in\Gamma^\N$, there exists a set $\cN_\varepsilon\subseteq \N$ such that $\liminf_{m\to\infty} m^{-1}\#(\cN_\varepsilon\cap[0,m]) \geq1-\varepsilon$ and for every $n\in\cN_\varepsilon$,
    $$
    \frac{\bmu(E_{nN}(\io))}{\bmu(B^{\Pi}(\io, 2^{-(n-1)N}) \cap [\io|_{t_{(n-1)N}}])} \geq c.
    $$
\end{lemma}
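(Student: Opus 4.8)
The plan is to reduce the lemma to two facts about the quantities
$$
\mathrm{num}_n := \bmu(E_{nN}(\io)),\qquad \mathrm{den}_n := \bmu\big(B^\Pi(\io,2^{-(n-1)N})\cap[\io|_{t_{(n-1)N}}]\big),\qquad a_n := \log\frac{\mathrm{den}_n}{\mathrm{num}_n},
$$
and then to conclude by a Markov-type counting argument. The two facts are: \emph{(I)} for $\bmu$-a.e.\ $\io$ and all large $n$ one has $E_{nN}(\io)\subseteq B^\Pi(\io,2^{-(n-1)N})\cap[\io|_{t_{(n-1)N}}]$, so $a_n\geq 0$; and \emph{(II)} there is a constant $L=L(N)<\infty$ \emph{not depending on} $\io$ with $\limsup_{m\to\infty}\tfrac1m\sum_{n=1}^m a_n\leq L$ for $\bmu$-a.e.\ $\io$. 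Granting these, fix a typical $\io$ and $\varepsilon\in(0,1)$ and set $\mathcal N_\varepsilon:=\{n\in\N:\ a_n\leq (L+1)/\varepsilon\}$. Since $a_n\geq 0$ and $\sum_{n=1}^m a_n\leq(L+1)m$ for large $m$, Markov's inequality gives $\#([1,m]\setminus\mathcal N_\varepsilon)\leq\varepsilon m$ eventually, so $\mathcal N_\varepsilon$ (intersected with the density-one set from (I)) has lower density at least $1-\varepsilon$; and for $n\in\mathcal N_\varepsilon$ we have $\mathrm{num}_n/\mathrm{den}_n=e^{-a_n}\geq e^{-(L+1)/\varepsilon}=:c>0$.

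Fact (I) is immediate: for $\bmu$-a.e.\ $\io$ and large $n$, $\Pi(E_{nN}(\io))$ is a rectangle of height $|\lambda_2(\io|_{t_{nN}})|\leq 2^{-nN}$ and width at most $2^{-nN}$ that contains $\Pi(\io)$, hence has diameter at most $\sqrt 2\,2^{-nN}\leq 2^{-(n-1)N}$ (any $N\geq 1$); therefore $\Pi(E_{nN}(\io))\subseteq B(\Pi(\io),2^{-(n-1)N})$, while $[\io|_{t_{nN}}]\subseteq[\io|_{t_{(n-1)N}}]$ because $t_{nN}\geq t_{(n-1)N}$.

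For fact (II) we bound $a_n$ by a telescoping part plus a local ``doubling defect''. The set $B(\Pi(\io),2^{-(n-1)N})\cap\Pi([\io|_{t_{(n-1)N}}])$ has $x$-extent inside the interval of length $2\cdot 2^{-(n-1)N}$ centred at $\pi_\mathtt{x}\Pi(\io)$, which meets at most three members of $\mathcal D_{(n-1)N}$, and its $y$-extent is automatically inside $\pi_\mathtt{y}\Pi([\io|_{t_{(n-1)N}}])$; hence $B^\Pi(\io,2^{-(n-1)N})\cap[\io|_{t_{(n-1)N}}]$ is covered by $E_{(n-1)N}(\io)$ together with its at most two neighbours in $\cE_{(n-1)N}$ obtained by shifting the dyadic $x$-tube. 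Writing $D_k(\io)$ for the union of $E_k(\io)$ with those neighbours and $\gamma_k(\io):=\log\big(\bmu(D_k(\io))/\bmu(E_k(\io))\big)\geq 0$, we obtain $\mathrm{den}_n\leq\bmu(D_{(n-1)N}(\io))$ and so
$$
a_n\ \leq\ \log\frac{\bmu(E_{(n-1)N}(\io))}{\bmu(E_{nN}(\io))}\ +\ \gamma_{(n-1)N}(\io)\ =:\ u_n+\gamma_{(n-1)N}(\io),\qquad u_n,\gamma_{(n-1)N}\geq 0.
$$
Since $\cE_0=\{\Gamma^\N\}$ and the partitions are increasing, $\sum_{n=1}^m u_n=-\log\bmu(E_{mN}(\io))$; because the canonical projections of the cells of $\cE_n$ are approximate squares of side $2^{-n}$ and $\mu$ is exact-dimensional, $-\tfrac1n\log\bmu(E_n(\io))\to D_0$ $\bmu$-a.e.\ for a finite constant $D_0$ (the dimension of $\mu$, in the normalization matching $\log$), using that the covering multiplicity of $\Pi$ over $\cE_n$-cells grows subexponentially along $\bmu$-a.e.\ orbit; hence $\tfrac1m\sum_{n=1}^m u_n\to N D_0$ $\bmu$-a.e. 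For the defect, at \emph{every} scale $k$ the blocks $D_k(\cdot)$ cover $\Gamma^\N$ with each cell of $\cE_k$ contained in at most three of them, so concavity of $\log$ gives
$$
\int\gamma_k\,d\bmu=\sum_{E\in\cE_k}\bmu(E)\log\frac{\bmu(D_k(E))}{\bmu(E)}\ \leq\ \log\sum_{E\in\cE_k}\bmu(D_k(E))\ \leq\ \log 3 .
$$
To pass from this scale-wise estimate to a \emph{uniform} $\bmu$-a.e.\ bound on $\tfrac1m\sum_{n=1}^m\gamma_{(n-1)N}$, note that after the random translation of $\mu$ — which additionally forces the dyadic phases $\{2^{nN}\pi_\mathtt{x}\Pi(\io)\}$ to equidistribute in $[0,1)$ for $\bmu$-a.e.\ $\io$, by a Fubini argument over the translation parameter — the value $\gamma_{(n-1)N}(\io)$ is a fixed function of $\sigma^{t_{(n-1)N}}\io$, of the residual scale $2^{-(n-1)N}|\lambda_1(\io|_{t_{(n-1)N}})|^{-1}$, and of that phase; that is, a fixed function evaluated along the orbit of the \emph{ergodic} suspension semi-flow appearing in Lemma~\ref{lemma-equidistribution} (via Proposition~\ref{prop-suspensionergodic}), extended by the phase coordinate. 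Birkhoff's theorem then gives $\tfrac1m\sum_{n=1}^m\gamma_{(n-1)N}(\io)\to G_\ast$ $\bmu$-a.e.\ with $G_\ast$ \emph{constant}, and Fatou applied to these non-negative averages together with the displayed inequality forces $G_\ast=\int G_\ast\,d\bmu\leq\log 3$. Combining, $\limsup_m\tfrac1m\sum_{n=1}^m a_n\leq N D_0+\log 3=:L$, which proves (II).

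The main difficulty is precisely the uniformity in $\io$ in (II). The per-scale estimate $\int\gamma_k\,d\bmu\leq\log 3$ is soft, but without any separation one cannot bound $\gamma_{(n-1)N}$ pathwise — the $\pi_\mathtt{x}\mu$-mass near a point need not be comparable to the contribution of a single cylinder — and the position of $\pi_\mathtt{x}\Pi(\sigma^{t_{nN}}\io)$ inside the relevant dyadic tube is controlled by fractional parts whose equidistribution is not automatic, since $\pi_\mathtt{x}\mu$ may be singular and there is no irrationality hypothesis for it. Both obstructions are neutralised only in the mean, via (a) the random translation, which restores equidistribution of the phases on average, and (b) the ergodicity of the suspension semi-flow (Proposition~\ref{prop-suspensionergodic}), which converts the scale-wise $L^1$-bound into a genuinely uniform a.e.\ bound on the time averages. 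A further point that must be discharged en route is the exact-dimensionality of $\bmu$ along the filtration $(\cE_n)$ — i.e.\ the subexponential growth of the covering multiplicity of $\Pi$ over $\cE_n$-cells at $\bmu$-typical points — which likewise has no separation to rely on.
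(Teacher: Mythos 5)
Your step (I) and the closing Markov argument are fine, but both halves of your fact (II) have genuine gaps. For the telescoping part, note that $\tfrac1m\sum_{n=1}^m u_n=-\tfrac1m\log\bmu(E_{mN}(\io))$ is controlled by the mass of the \emph{single} terminal cell $E_{mN}(\io)$, so the density-$(1-\varepsilon)$ slack in the statement cannot absorb bad scales here: you genuinely need an a.e.\ exponential lower bound on $\bmu(E_{mN}(\io))$, a cylinder $[\io|_{t_{mN}}]$ (of width $|\lambda_1(\io|_{t_{mN}})|\gg 2^{-mN}$) intersected with a dyadic tube of width $2^{-mN}$ whose boundary may pass arbitrarily close to $\pi_\mathtt{x}\Pi(\io)$. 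You justify this by ``exact dimensionality of $\bmu$ along $(\cE_n)$'' via ``subexponential covering multiplicity of $\Pi$ over $\cE_n$-cells'', but that multiplicity statement is exactly what one may \emph{not} assume in the absence of separation (it is the content one is trying to avoid), and no argument is given for it; moreover the claimed limit $ND_0$ with $D_0=\dimh\mu$ is not correct in general (the symbolic cell mass decays at a Lyapunov--entropy rate, not at rate $\dimh\mu$), though only finiteness is used. A correct treatment of this cell-mass lower bound would require an argument of its own (e.g.\ a uniform bound $\pi_\mathtt{x}\mu(B(x,r))\geq r^{C}$ on the support together with a Borel--Cantelli estimate for distances of the translated grid to $\pi_\mathtt{x}\Pi(\io)$), none of which appears in the proposal.

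The second gap is the passage from the scale-wise estimate $\int\gamma_k\,d\bmu\leq\log 3$ to an a.e.\ bound on $\limsup_m\tfrac1m\sum_{n}\gamma_{(n-1)N}(\io)$. Fatou only controls $\int\liminf_m\tfrac1m\sum_n\gamma_{(n-1)N}\,d\bmu$, and your appeal to Birkhoff presupposes an \emph{ergodic} system on triples $(\io,t,\text{phase})$: Proposition~\ref{prop-suspensionergodic} and Lemma~\ref{lemma-equidistribution} concern the suspension flow without the phase coordinate, and the joint distribution of $\sigma^{t_{(n-1)N}}\io$ with the dyadic phase of the translated grid seen through $\varphi_{\io|_{t_{(n-1)N}}}^{-1}$ is precisely the delicate point; the random translation plus Fubini gives equidistribution of the phases alone, not ergodicity of the phase-extended skew product, so the constancy of your $G_\ast$ is unproved. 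The paper's proof avoids both issues: it sandwiches $E_{nN}(\io)$ between symbolic balls, using normality of the translated $x$-coordinate only to get $B^\Pi(\io,2^{-(n+1)N})\cap[\io|_{t_{(n+1)N}}]\subseteq E_{nN}(\io)$ on a density-$(1-\varepsilon)$ set of $n$, and then bounds the decay of $\bmu(B^\Pi(\io,2^{-mN})\cap[\io|_{t_{mN}}])$ trivially from below by the mass of a deep cylinder contained in the ball ($\geq(\min_i\bmu([i]))^{O(mN)}$), finishing with telescoping and Markov. If you want to salvage your scheme, replace the comparison of $E_{nN}(\io)$ with the neighbouring cells of $\cE_{(n-1)N}$ by a comparison with such symbolic balls or cylinders; as written, fact (II) does not stand.
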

\begin{proof}
    Since Lebesgue-almost every number is normal in every base, it follows from Fubini's theorem that after randomly translating $\mu$, for any $N\in\N$ we have
    $$
    \lim_{m\to\infty} \frac{1}{m} \sum_{n=1}^m \delta_{2^{nN} \pi_\mathtt{x}x \mod 1} = \mathcal{L}_{[0,1]}
    $$
    for $\mu$-almost every $x$. In particular, for any large enough $N\in\N$ and $\bmu$-almost every $\io\in\Gamma^\N$, there exists a set $\cN_\varepsilon^1\subseteq \N$ such that $\liminf_{m\to\infty} m^{-1}\#(\cN_\varepsilon^1\cap[0,m]) \geq1-\varepsilon/2$ and for every $n\in\cN_\varepsilon^1$, $2^{nN} \pi_\mathtt{x}\Pi(\io) \mod 1 \in [2^{-N},1-2^{-N}]$ or equivalently,
    $$
 B(\pi_\mathtt{x}\Pi(\io), 2^{-(n+1)N})\subseteq \mathcal{D}_{nN}(\pi_\mathtt{x}\Pi(\io)).
    $$
    In particular, taking preimages under $\pi_\mathtt{x}$ and $\Pi$,
    \begin{equation}\label{eq-lemma4.2.1}
B^{\Pi}(\io, 2^{-(n+1)N})\cap [\io|_{t_{(n+1)N}}] \subseteq E_{nN}(\io)
    \end{equation}
    for $n\in\cN_\varepsilon^1$. 
    
    Let $\ell :=\left\lceil \max_{i,j\in\Gamma}\frac{\log|\lambda_1(i)|}{ \log|\lambda_2(j)|}\right\rceil$ so that $[\io|_{t_{m}\ell}] \subseteq B^\Pi(\io, 2^{-m}) \cap [\io|_{t_m}]$ for any $\io\in\Gamma^\N$ and $m\in\N$. Since $\limsup_{m\to\infty} \frac{-\log \bmu([\io|_m])}{m} \leq \log \#\Gamma$ for $\bmu$-almost every $\io\in\Gamma^\N$ and $t_m \leq O(m)$ for every $m$, we have
    \begin{align*}
&\limsup_{m\to\infty} \frac{1}{m}\sum_{n=1}^{m-1} \log \frac{\bmu(B^{\Pi}(\io, 2^{-(n-1)N})\cap [\io|_{t_{(n-1)N}}])}{\bmu(B^{\Pi}(\io, 2^{-(n+1)N})\cap [\io|_{t_{(n+1)N}}])} \\
\leq\ &2\limsup_{m\to\infty} \frac{-\log \bmu(B^{\Pi}(\io, 2^{-mN})\cap [\io|_{t_{mN}}])}{m} \\
\leq\ &\limsup_{m\to\infty} \frac{-\log \bmu([\io|_{\ell t_{mN}}])}{\ell t_{mN}} \cdot \frac{\ell t_{mN}}{m}\\
\leq\ &O(N)
    \end{align*}
   for $\bmu$-almost every $\io\in\Gamma^\N$, where the implicit constant in $O$ only depends on the IFS $\Phi$. In particular, there exists $c>0$ such that for $\bmu$-almost every $\io\in\Gamma^\N$, there exists a set $\cN_\varepsilon^2\subseteq \N$ such that $\liminf_{m\to\infty} m^{-1}\#(\cN_\varepsilon^2\cap[0,m]) \geq1-\varepsilon/2$ and for every $n\in\cN_\varepsilon^2$, 
   \begin{equation}\label{eq-lemma4.2.2}
   \frac{\bmu(B^{\Pi}(\io, 2^{-(n+1)N})\cap [\io|_{t_{(n+1)N}}])}{\bmu(B^{\Pi}(\io, 2^{-(n-1)N})\cap [\io|_{t_{(n-1)N}}])} \geq c.
   \end{equation}
   Setting $\cN_\varepsilon = \cN_\varepsilon^1\cap\cN_\varepsilon^2$ and combining \eqref{eq-lemma4.2.1} and \eqref{eq-lemma4.2.2} completes the proof. 
\end{proof}

For $\delta>0$, let 
$$
E_n^\delta(\io) = \lbrace \jo\in \Gamma^*:\ [\jo]\subseteq E_n(\io)\ \text{and}\ |\lambda_1(\jo)|\leq \delta 2^{-n} < |\lambda_1(\jo|_{|\jo|-1})|\rbrace
$$
index the family of cylinders that are completely contained in $E_n(\io)$ with ``relative width'' $\approx\delta$. Note that $E_n^\delta(\io) = \emptyset$ if $\Pi(E_n(\io))$ has width $<2^{-n}\delta$ but this happens rarely. The essential content of the following lemma is that the measure $\bmu$ is not usually concentrated near the boundary of $E_n(\io)$. In order to simplify notation, we occasionally identify $E_n^\delta(\io)$ with $\bigcup_{\jo\in E_n^\delta(\io)} [\jo]$ and use notation such as $\bmu(E_{nN}(\io)\setminus E_{nN}^\delta(\io))$.

\begin{lemma}\label{lemma-bigdensity2}
For every $\varepsilon>0$ and large enough $N\in\N$, there exists $\delta>0$ such that the following holds: For $\bmu$-almost every $\io\in\Gamma^\N$, there exists a set $\cN_\varepsilon\subseteq \N$ such that $\liminf_{m\to\infty} m^{-1}\#(\cN_\varepsilon\cap[0,m]) \geq1-\varepsilon$ and for every $n\in\cN_\varepsilon$,
$$
\frac{\bmu(E_{nN}^\delta(\io))}{\bmu(E_{nN}(\io))} \geq 1-\varepsilon.
$$
\end{lemma}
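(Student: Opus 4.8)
The plan is to reduce the claim to a boundary estimate for the self-similar measure $\pi_{\mathtt x}\mu$ and then to carry out the averaging through the ergodic theorem, using the equidistribution in Lemma~\ref{lemma-equidistribution} and the normality argument in the proof of Lemma~\ref{lemma-bigdensity1}. For $\bmu$-a.e.\ $\io$ and all large $n$ one has $E_{nN}(\io)=[\io|_{t_{nN}}]\cap\Pi^{-1}\pi_{\mathtt x}^{-1}(D_{nN})$, with $D_{nN}\in\mathcal D_{nN}$ the dyadic interval containing $\pi_{\mathtt x}\Pi(\io)$, and $|\lambda_1(\io|_{t_{nN}})|=2^{-\rho nN+o(n)}$ with $\rho=\lambda_1^\mu/\lambda_2^\mu\in(0,1)$. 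Since $\bmu$ is Bernoulli, $\pi_{\mathtt x}\Pi\bmu_{[\io|_{t_{nN}}]}$ is $\pi_{\mathtt x}\mu$ pushed forward by the affine contraction $y\mapsto\lambda_1(\io|_{t_{nN}})y+\pi_{\mathtt x}\varphi_{\io|_{t_{nN}}}(0)$, which carries an interval $J_n$ of length $\ell_n:=2^{-nN}/|\lambda_1(\io|_{t_{nN}})|=2^{-(1-\rho)nN+o(n)}\to0$ onto $D_{nN}$ and carries $x_n:=\pi_{\mathtt x}\Pi(\sigma^{t_{nN}}\io)$ to $\pi_{\mathtt x}\Pi(\io)$; thus $x_n$ lies in $J_n$ at relative position $\gamma_n:=2^{nN}\pi_{\mathtt x}\Pi(\io)\bmod1$. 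A cylinder $[\jo]\subseteq[\io|_{t_{nN}}]$ with $|\lambda_1(\jo)|\approx\delta2^{-nN}$ fails to lie in $E_{nN}(\io)$ only when $\pi_{\mathtt x}\Pi([\jo])$, an interval of diameter $\le C\delta2^{-nN}$ (with $C=2$, as $\mu$ is supported on $[-1,1]^2$), is not contained in $D_{nN}$; transporting this back by the contraction yields
$$
1-\frac{\bmu(E_{nN}^\delta(\io))}{\bmu(E_{nN}(\io))}\ \le\ \frac{\pi_{\mathtt x}\mu(\{y\in J_n:\ \operatorname{dist}(y,\partial J_n)\le C\delta\ell_n\})}{\pi_{\mathtt x}\mu(J_n)}=:G_n(\io).
$$

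The normality argument in the proof of Lemma~\ref{lemma-bigdensity1} shows that, after a random translation of $\mu$, for $\bmu$-a.e.\ $\io$ the set $\mathcal N^1:=\{n:\ \gamma_n\in[2^{-N},1-2^{-N}]\}$ has lower density at least $1-2^{1-N}$; for $n\in\mathcal N^1$ we have $B(x_n,2^{-N}\ell_n)\subseteq J_n$, so $G_n(\io)\le\pi_{\mathtt x}\mu(\{y:\operatorname{dist}(y,\partial J_n)\le C\delta\ell_n\})\,\pi_{\mathtt x}\mu(B(x_n,2^{-N}\ell_n))^{-1}$. Since $G_n\le1$ always and Markov's inequality turns a bound on the Cesàro averages of $G_n$ into a density bound for $\{n:G_n(\io)>\varepsilon\}$, it suffices to find a constant $C'(N)$ depending only on $N$ and $\Phi$ with
$$
\limsup_{M\to\infty}\frac1M\sum_{n=1}^M\mathbf{1}_{\mathcal N^1}(n)\,G_n(\io)\ \le\ C'(N)\,\delta\qquad\text{for }\bmu\text{-a.e.\ }\io;
$$
then, choosing $N$ so large that $2^{1-N}<\varepsilon^2/2$ and afterwards $\delta<\varepsilon^2/(2C'(N))$, the set $\{n:\ 1-\bmu(E^\delta_{nN}(\io))/\bmu(E_{nN}(\io))>\varepsilon\}$ has upper density $\le\varepsilon$.

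Up to harmless errors, $(\sigma^{t_{nN}}\io,\ell_n)$ is an orbit of $T_N$ in (an enlargement of) the suspension $W$ from the proof of Lemma~\ref{lemma-equidistribution}, while $\gamma_{n+1}=2^N\gamma_n\bmod1$; after a generic translation of $\mu$, $\pi_{\mathtt x}\Pi(\io)$ is normal in base $2^N$ for $\bmu$-a.e.\ $\io$, so $\gamma_n$ equidistributes to Lebesgue measure on $[0,1]$ asymptotically independently of $(\sigma^{t_{nN}}\io,\ell_n)$. Birkhoff's theorem then identifies the preceding $\limsup$ with an integral over $(x,\gamma,\ell)$. Integrating first in $\gamma$ — a change of variables gives $\int_0^1\pi_{\mathtt x}\mu(\{y:\operatorname{dist}(y,\partial J(x,\gamma,\ell))\le C\delta\ell\})\,d\gamma\le2C\delta\,\pi_{\mathtt x}\mu(B(x,2\ell))$, where $J(x,\gamma,\ell)=[x-\gamma\ell,x+(1-\gamma)\ell]$ — and then applying the scale-uniform estimate $\int\pi_{\mathtt x}\mu(B(x,2\ell))\,\pi_{\mathtt x}\mu(B(x,2^{-N}\ell))^{-1}\,d\pi_{\mathtt x}\mu(x)\le2^{N+2}+1$ (partition each ball of radius $2\ell$ into at most $2^{N+2}+1$ intervals of length $2^{-N}\ell$, each contained in $B(z,2^{-N}\ell)$ whenever it contains $z$, and use Fubini), this integral is at most $C'(N)\delta$, since the $x$-marginal of the limit law of $(x_n,\ell_n)$ is absolutely continuous with respect to $\pi_{\mathtt x}\mu$ with density bounded in terms of $\Phi$.

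The step I expect to be the main obstacle is the last one, and within it the assertion that $\gamma_n$ equidistributes to Lebesgue measure \emph{independently} of the slow data $(\sigma^{t_{nN}}\io,\ell_n)$: this is a disjointness statement between a sequence produced by the suspension dynamics and the $\times2^N$-orbit of a generic point, for which the random translation of $\mu$ and the irrationality exploited in Proposition~\ref{prop-suspensionergodic} are the relevant tools, exactly as in the proofs of Lemmas~\ref{lemma-equidistribution} and~\ref{lemma-bigdensity1}.
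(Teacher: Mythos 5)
Your reduction of the lemma to the boundary--strip ratio $G_n(\io)$ is sound and matches the geometry the paper uses, but the way you then try to control $G_n$ has a genuine gap, and it is exactly the one you flag yourself. The bound $\limsup_M \frac1M\sum_{n\le M}\mathbf{1}_{\cN^1}(n)G_n(\io)\le C'(N)\delta$ rests on two unproved ingredients. First, the asymptotic independence of $\gamma_n=2^{nN}\pi_\mathtt{x}\Pi(\io)\bmod 1$ from the slow data $(\sigma^{t_{nN}}\io,\ell_n)$: neither Lemma \ref{lemma-equidistribution} nor Proposition \ref{prop-suspensionergodic} gives joint genericity of the pair, since the initial condition $(\io,\gamma_0)$ is a deterministic function of $\io$ (plus the translation) and is not distributed like the product measure; ergodicity of the product system does not make a specific orbit pair generic, so this is a real disjointness statement requiring a new argument. (The paper is careful to avoid precisely such joint-equidistribution claims; that is why Proposition \ref{prop-productprojections} is formulated uniformly in $\theta$.) Second, even granting independence, the step ``Birkhoff identifies the limsup with an integral over $(x,\gamma,\ell)$'' is not an application of Birkhoff's theorem: $\ell_n=2^{-(1-\rho)nN+o(n)}\to 0$, so $G_n$ is not a fixed observable evaluated along an orbit, the limit law of $\ell_n$ is $\delta_0$, and the quantities $\pi_\mathtt{x}\mu(B(x,2\ell))/\pi_\mathtt{x}\mu(B(x,2^{-N}\ell))$ have no limit as $\ell\to0$; one would need a scale-uniform statement together with a scenery/CP-chain type averaging argument, which is not supplied.

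The missed observation is that no equidistribution of $\gamma_n$ is needed at all: the numerator $\bmu(E_{nN}(\io)\setminus E^\delta_{nN}(\io))$ can be bounded uniformly in $\io$ and $n$. Decompose it over the stopping cylinders $[\jo|_{t_{nN}(\jo)}]$ meeting $E_{nN}(\io)$; by self-affinity, the conditional mass of the bad set inside each such cylinder equals the $\mu$-measure of a set whose $\pi_\mathtt{x}$-projection lies in a union of two intervals of length $2\delta$ at unit scale (here only the $\delta$-relative-width cylinders straddling $\partial D_{nN}$ matter), which is at most $\varepsilon'$ once $\delta$ is chosen from the uniform modulus of non-atomicity of $\pi_\mathtt{x}\mu$. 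Since every such stopping cylinder is contained in $B^{\Pi}(\io,2^{-(n-1)N})\cap[\io|_{t_{(n-1)N}}]$, summing gives
$\bmu(E_{nN}(\io)\setminus E^\delta_{nN}(\io))\le \varepsilon'\,\bmu(B^{\Pi}(\io,2^{-(n-1)N})\cap[\io|_{t_{(n-1)N}}])$
for \emph{every} $\io$ and $n$, and the only place a density-one set of $n$ enters is Lemma \ref{lemma-bigdensity1}, which converts the denominator into $\bmu(E_{nN}(\io))$ at the cost of the constant $c$. Your approach could in principle be completed, but only by proving the disjointness statement and reworking the averaging at shrinking scales, which is substantially harder than the lemma itself.
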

\begin{proof}
    It suffices to show that for any $\varepsilon'>0$ and $N\in\N$, there exists $\delta>0$ such that 
    \begin{equation}\label{eq-sufficestoshow}
\frac{\bmu(E_{nN}(\io)\setminus E_{nN}^\delta(\io))}{\bmu(B^{\Pi}(\io, 2^{-(n-1)N})\cap [\io|_{t_{(n-1)N}}])} \leq \varepsilon'
    \end{equation}
    for every $\io\in\Gamma^\N$ and $n\in\N$. Indeed, it is easy to see that
    $$
\frac{\bmu(E_{nN}^\delta(\io))}{\bmu(E_{nN}(\io))} \geq 1 - \frac{\bmu(E_{nN}(\io)\setminus E_{nN}^\delta(\io))}{\bmu(B^{\Pi}(\io, 2^{-(n-1)N})\cap [\io|_{t_{(n-1)N}}])} \frac{\bmu(B^{\Pi}(\io, 2^{-(n-1)N})\cap [\io|_{t_{(n-1)N}}])}{\bmu(E_{nN}(\io))}
    $$
    whence the claim will follow from \eqref{eq-sufficestoshow} and Lemma \ref{lemma-bigdensity1} by choosing $\varepsilon' < \varepsilon/c$.
    
    If the measure $\pi_\mathtt{x}\mu$ were atomic, the claim of the lemma would be trivial. Supposing that it is not, there exists $\delta>0$ such that if $I,J\subseteq \R$ are intervals of length at most $2\delta$, then $\mu(\pi_\mathtt{x}^{-1}(I\cup J))\leq \varepsilon'$. In particular, for small enough $\delta>0$ and any $\jo\in\Gamma^*$ with $|\lambda_1(\jo)|\geq 2^{-nN}$,
    $$
    \bmu_{[\jo]}(E_{nN}(\io)\setminus E_{nN}^\delta(\io)) = \mu( \varphi_{\jo}^{-1}(\Pi(E_{nN}(\io)\setminus E_{nN}^\delta(\io)))) < \varepsilon'
    $$
    since $\pi_\mathtt{x}(\varphi_{\jo}^{-1}(\Pi(E_{nN}(\io)\setminus E_{nN}^\delta(\io))))$ is contained in a union of two intervals of length at most $2\delta$, by the definition of $E_{nN}^\delta(\io)$.  
    
    Now, for every $\jo\in\Gamma^*$ such that $2^{-nN} \leq |\lambda_1(\jo)| \leq 2^{-(n-1)N}$ and $[\jo]\cap E_{nN}(\io)\neq \emptyset$, we have $[\jo]\subseteq B^{\Pi}(\io, 2^{-(n-1)N})\cap [\io|_{t_{(n-1)N}}]$, whence
    \begin{align*}
&\frac{\bmu(E_{nN}(\io)\setminus E_{nN}^\delta(\io))}{\bmu(B^{\Pi}(\io, 2^{-(n-1)N})\cap [\io|_{t_{(n-1)N}}])}  \\
\leq\ &\sum_{\jo} \frac{\bmu([\jo])}{\bmu(B^{\Pi}(\io, 2^{-(n-1)N})\cap [\io|_{t_{(n-1)N}}])}\bmu_{[\jo]}(E_{nN}(\io)\setminus E_{nN}^\delta(\io)) \\
\leq\ &\varepsilon' \sum_{\jo} \frac{\bmu([\jo])}{\bmu(B^{\Pi}(\io, 2^{-(n-1)N})\cap [\io|_{t_{(n-1)N}}])}\\
\leq\ &\varepsilon'
    \end{align*}
    for every $\io\in\Gamma^\N$ and $n\in\N$, where the sum is taken over those $\jo\in\Gamma^*$ with $2^{-nN} \leq |\lambda_1(\jo)| \leq 2^{-(n-1)N}$ and $[\jo]\cap E_{nN}(\io)\neq \emptyset$. This completes the proof.
\end{proof}

\begin{comment}
We will also need the following simple lemma, that restrictions of two measures which are close in $\dlp$ to a subset of large measure are still close in $\dlp$. 

\begin{lemma}[Lemma 3.3 of \cite{BaranyKaenmakiPyoralaWu2023}]\label{claim-restriction}
For any $r,\delta>0$, the following holds for all small enough $\varepsilon\geq\varepsilon'>0$:

If $\mu$ and $\nu$ are probability measures on $[-1,1]^2$ with $d_{\rm LP}(\mu,\nu)<\varepsilon'$ and $B = B(x,r)$ is a closed ball with $\min\lbrace \mu(B),\nu(B)\rbrace\geq \delta$ and $\nu(B(x,r+\varepsilon'))\leq \nu(B(x,r-\varepsilon'))+\varepsilon$, then 
$$
d_{\rm LP}(\mu_B, \nu_B) < O(\varepsilon/\delta).
$$
\end{lemma}
\end{comment}

\subsubsection{Product structure} Recall that $\pi_{\mathtt{x}}:\R^2\to\R$ denotes the orthogonal projection to the $x$-axis. Let $\mu = \int \delta_{\pi_{\mathtt{x}}(\Pi(\io))} \times \mu_\io\,d\bmu(\io)$ be the disintegration of $\mu$ with respect to $\pi_{\mathtt{x}}$, where each $\mu_\io$ is a probability measure on $[-1,1]$. 

The content of the following proposition is that the measures $\Pi\bmu_{E_n(\io)}$ enjoy a product structure. We use $\dlp$ to denote the L{\'e}vy-Prokhorov distance between probability measures. 

\begin{proposition}[Product structure]\label{prop-productstructure}
    For any $\varepsilon>0$ and large enough $N\in\N$, the following holds for $\bmu$-almost every $\io\in\Gamma^\N$: For any $\varepsilon'>0$, there exists a set $\cN_\varepsilon\subseteq \N$ such that $\liminf_{m\to\infty} m^{-1}\#(\cN_\varepsilon\cap[0,m]) \geq1-\varepsilon$ and for every $n\in\cN_\varepsilon$,
    $$
\dlp( S_{nN}\Pi\bmu_{E_{nN}(\io)},\ S_{nN}(\varphi_{\io|_{t_{nN}}}(\pi_{\mathtt{x}}\mu \times \mu_{\sigma^{t_{nN}}\io}))_{\Pi(E_{nN}(\io))}) <\varepsilon'.
    $$
\end{proposition}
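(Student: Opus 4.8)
The plan is to show that after magnifying to $E_{nN}(\io)$ and rescaling, the measure $\Pi\bmu_{E_{nN}(\io)}$ is, up to a small L\'evy--Prokhorov error, the image under the affine map $\varphi_{\io|_{t_{nN}}}$ of the product $\pi_\mathtt{x}\mu\times\mu_{\sigma^{t_{nN}}\io}$, restricted to $\Pi(E_{nN}(\io))$. The heuristic is that $\Pi(E_{nN}(\io))$ is a ``vertically tall'' approximate square whose horizontal extent cuts across many cylinders of level $t_{nN}$; along the vertical direction the measure inside $[\io|_{t_{nN}}]$ looks like $\varphi_{\io|_{t_{nN}}}\mu$, which has vertical marginal $\varphi_{\io|_{t_{nN}}}\mu_{\sigma^{t_{nN}}\io}$ (using the self-affinity $\mu = \sum_\jo \bmu([\jo])\varphi_\jo\mu$ together with the disintegration identity $\mu_{[\jo|_{t_{nN}}]}$-conditional-on-$\pi_\mathtt{x}$ matching $\varphi_{\io|_{t_{nN}}}\mu_{\sigma^{t_{nN}}\io}$ up to the horizontal position), while along the horizontal direction the union over all the level-$t_{nN}$ cylinders meeting $E_{nN}(\io)$ reassembles a horizontal marginal close to $\pi_\mathtt{x}\mu$.

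The key steps, in order, would be:
\emph{(1)} Fix $\varepsilon'>0$ and use Lemma~\ref{lemma-bigdensity2} to pass to a density-$(1-\varepsilon)$ set of $n$ on which $\bmu(E_{nN}^\delta(\io))\geq(1-\varepsilon')\bmu(E_{nN}(\io))$, so that the measure is concentrated on cylinders $[\jo]$, $\jo\in E_{nN}^\delta(\io)$, that sit well inside $E_{nN}(\io)$ with horizontal width $\approx\delta 2^{-nN}$; discarding the rest costs only $O(\varepsilon')$ in $\dlp$. On each such cylinder $[\jo]$ write $\bmu_{[\jo]} = $ push-forward structure coming from $\Pi\bmu_{[\jo]} = \varphi_\jo\mu$ (by self-affinity).
\emph{(2)} Group the cylinders $\jo\in E_{nN}^\delta(\io)$ according to their ``prefix'' $\jo|_{t_{nN}}$; since $\lambda_1^\mu>\lambda_2^\mu$, for most $n$ the interval $\pi_\mathtt{x}(\Pi(E_{nN}(\io)))$ has length $\approx 2^{-nN}$ which is much longer than $|\lambda_1(\jo|_{t_{nN}})|$, so $E_{nN}(\io)$ contains \emph{all} of $[\jo|_{t_{nN}}]$ for these prefixes, up to a boundary layer already controlled by Lemma~\ref{lemma-bigdensity2} and the non-atomicity of $\pi_\mathtt{x}\mu$. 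Thus $\bmu_{E_{nN}(\io)}$ is close to a convex combination $\sum_{\jo|_{t_{nN}}} w_{\jo|_{t_{nN}}}\,\bmu_{[\jo|_{t_{nN}}]}$, and after projecting, $\Pi\bmu_{E_{nN}(\io)}$ is $\dlp$-close to $\big(\sum w_{\jo|_{t_{nN}}}\varphi_{\jo|_{t_{nN}}}\mu\big)_{\Pi(E_{nN}(\io))}$.
\emph{(3)} Now compare this with the target $(\varphi_{\io|_{t_{nN}}}(\pi_\mathtt{x}\mu\times\mu_{\sigma^{t_{nN}}\io}))_{\Pi(E_{nN}(\io))}$. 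All the maps $\varphi_{\jo|_{t_{nN}}}$ with $\jo|_{t_{nN}}$ meeting $E_{nN}(\io)$ have the \emph{same} vertical contraction ratio $|\lambda_2(\io|_{t_{nN}})|$ and vertical translation up to an error $\leq 2^{-nN}$ (that's what being in the same element of the vertical dyadic partition means), so after $S_{nN}$ they all agree in the vertical coordinate with $S_{nN}\varphi_{\io|_{t_{nN}}}$ up to $O(1)$-bounded, vanishing-after-rescaling translations --- here I would invoke the random-translation trick already used in the text so that no mass sits near the dyadic vertical boundaries. In the horizontal coordinate, the $\varphi_{\jo|_{t_{nN}}}$ differ, but their horizontal images tile $\pi_\mathtt{x}(\Pi(E_{nN}(\io)))$, and the reassembled horizontal marginal $\sum w_{\jo|_{t_{nN}}}\pi_\mathtt{x}\varphi_{\jo|_{t_{nN}}}\mu$ restricted to this interval, after rescaling by $S_{nN}$, is exactly (a piece of) $\pi_\mathtt{x}\mu$ magnified --- this is the self-similarity of $\pi_\mathtt{x}\mu$, combined with the normalization built into Lemma~\ref{lemma-bigdensity1} ensuring $2^{nN}\pi_\mathtt{x}\Pi(\io)\bmod 1$ stays away from $0$ and $1$ so the rescaled interval is genuinely a dyadic cell of $\pi_\mathtt{x}\mu$. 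Combining the vertical and horizontal comparisons, and using that $\dlp$ is controlled coordinatewise up to a factor $2$, gives the product form with total error $O(\varepsilon')$; rescaling $\varepsilon'$ finishes.

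I expect the main obstacle to be step~(3): making precise that, after applying $\varphi_{\io|_{t_{nN}}}^{-1}$ and then $S_{nN}$, the horizontal marginals of $\bmu$ restricted to the various level-$t_{nN}$ cylinders inside $E_{nN}(\io)$ genuinely reassemble into a rescaled copy of $\pi_\mathtt{x}\mu$ (rather than some other self-similar piece), and that the conditional measures on vertical fibres are uniformly close to $\mu_{\sigma^{t_{nN}}\io}$ --- this is where one must use that the $x$-fibre conditional measures $\mu_\jo$ depend measurably on $\jo$ and vary little along a single level-$t_{nN}$ cylinder, together with a Lusin/Egorov-type argument to get uniformity over the density-$(1-\varepsilon)$ set of good $n$. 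The geometric bookkeeping (widths $\approx\delta 2^{-nN}$ versus $\approx 2^{-nN}$, heights $|\lambda_2(\io|_{t_{nN}})|$, boundary layers) is routine given Lemmas~\ref{lemma-bigdensity1} and~\ref{lemma-bigdensity2}, but translating all of it into a single $\dlp$ bound requires care with the order of quantifiers ($\io$ first, then $\varepsilon'$, then $\cN_\varepsilon$), which matches the statement as written.
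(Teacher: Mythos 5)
There is a genuine gap, and it starts with the geometric picture. Under $\lambda_1^\mu>\lambda_2^\mu$ the cylinders are \emph{wide}: $|\lambda_1(\io|_{t_{nN}})|\gg 2^{-nN}$ while $|\lambda_2(\io|_{t_{nN}})|\approx 2^{-nN}$, so the approximate square $E_{nN}(\io)$ is a thin vertical strip ($x$-width $\le 2^{-nN}$) contained in the \emph{single} cylinder $[\io|_{t_{nN}}]$; it does not ``cut across many cylinders of level $t_{nN}$'', and it contains no full cylinder of that level. Your step (2), which groups level-$t_{nN}$ cylinders whose horizontal images ``tile'' $\pi_\mathtt{x}(\Pi(E_{nN}(\io)))$ and then reassembles $\pi_\mathtt{x}\mu$ from them, therefore has the inequality $|\lambda_1(\jo|_{t_{nN}})|\ll 2^{-nN}$ exactly backwards, and the claim in step (3) that all these maps share the vertical contraction $|\lambda_2(\io|_{t_{nN}})|$ would be false even if several such cylinders were involved. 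The cylinders of $E^\delta_{nN}(\io)$ from Lemma \ref{lemma-bigdensity2} are much deeper words (width $\approx\delta 2^{-nN}$); they are used later in the proof of Theorem \ref{theorem-main}, not in this proposition, and Lemma \ref{lemma-bigdensity2} plays no role here.

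The correct reduction is the self-affinity identity $\Pi\bmu_{E_{nN}(\io)}=\varphi_{\io|_{t_{nN}}}\mu_{\varphi_{\io|_{t_{nN}}}^{-1}\Pi(E_{nN}(\io))}$, after which the whole content of the proposition is that the normalized restriction of $\mu$ to the thin vertical strip $\varphi_{\io|_{t_{nN}}}^{-1}\Pi(E_{nN}(\io))$ is $\dlp$-close to the corresponding restriction of $\pi_\mathtt{x}\mu\times\mu_{\sigma^{t_{nN}}\io}$. This is precisely the point you defer to the ``main obstacle'' paragraph, but it cannot be obtained from measurability of $\jo\mapsto\mu_\jo$ or from fibre measures ``varying little'' along a cylinder (nearby fibre conditionals need not be close at all). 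The paper's mechanism is different: apply the Lebesgue--Besicovitch differentiation theorem to the disintegration $\io\mapsto\delta_{\pi_\mathtt{x}\Pi(\io)}\times\mu_\io$ to get, at $\bmu$-a.e.\ point, that normalized restrictions of $\mu$ to thin strips converge to the product; use Lemma \ref{lemma-bigdensity1} (via the identity equating the $\pi_\mathtt{x}\mu$-density of the strip inside a comparable ball with the $\bmu$-density of $E_{nN}(\io)$ inside $B^\Pi(\io,2^{-(n-1)N})\cap[\io|_{t_{(n-1)N}}]$) to ensure the strip carries a definite proportion $c$ of that ball's mass, which is needed because the strip is not a ball centred at $\pi_\mathtt{x}\Pi(\sigma^{t_{nN}}\io)$; and then use Lemma \ref{lemma-equidistribution} (equidistribution of $\sigma^{t_{nN}}\io$ for a measure equivalent to $\bmu$, coming from the suspension-flow ergodicity) to convert the a.e.\ differentiation statement at the shifted points into a density-$(1-\varepsilon)$ set of times $n$. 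None of these three ingredients appears in your outline, so as written the proposal does not close the proof.
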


\begin{proof}
    The proof is similar to that of \cite[Lemma 6.3]{BaranyKaenmakiPyoralaWu2023}, with some technical differences arising from the fact that we are ``zooming in'' along the sets $E_n(\io)$ instead of Euclidean balls centered at $\Pi(\io)$. In this proof, if $R = [a,b]\times[c,d]\subset \R^2$ is a rectangle and $\nu$ is a Borel measure on $\R^2$, we let $\nu^R$ denote the push-forward of $\nu_R$ under the map $(x,y)\mapsto (x/(b-a), y/(d-c))$ which takes $R$ onto a translation of $[0,1]^2$. 

    It follows from self-affinity of $\mu$ and the Bernoulli property of $\bmu$ that for any $\jo\in\Gamma^*$,
    $$
    \Pi \bmu_{[\jo]} = \varphi_\jo \mu.
    $$
    Combining this with the equality $E_n(\io) = \Pi^{-1}(\Pi(E_n(\io)))\cap [\io|_{t_n}]$ we find that for any $\io\in\Gamma^\N$ and $n\in\N$,
    \begin{equation}\label{eq-selfaffinity}
\Pi\bmu_{E_n(\io)} = \Pi(\bmu_{[\io|_{t_n}]})_{E_n(\io)} = (\varphi_{\io|_{t_n}}\mu)_{\Pi(E_n(\io))} = \varphi_{\io|_{t_n}} \mu_{\varphi_{\io|_{t_n}}^{-1} \Pi(E_n(\io))}.
    \end{equation}
    Here the set $\varphi_{\io|_{t_n}}^{-1} \Pi(E_n(\io))$ is a rectangle centered at $\varphi_{\io|_{t_{n}}}^{-1} \Pi(\io) = \Pi(\sigma^{t_{n}}\io)$, of height $2|\lambda_2(\io|_{t_{n}})| |\lambda_2(\io|_{t_{n}})|^{-1} = 2$ and of width at most $2^{-n} |\lambda_1(\io_{t_{n}})|^{-1}$, which eventually allows us to relate $\mu_{\varphi_{\io|_{t_n}}^{-1} \Pi(E_n(\io))}$ to the conditional measure $\mu_{\sigma^{t_{n}}\io}$. In order to do this rigorously, it is convenient to first establish the equality
    \begin{equation}\label{eq-convenient}
        \frac{\pi_\mathtt{x}\mu(\pi_\mathtt{x}(\varphi_{\io|_{t_{n}}}^{-1} \Pi(E_{n}(\io))))}{\pi_\mathtt{x}\mu(B(\pi_\mathtt{x}\Pi(\sigma^{t_{n}}\io), 2^{-{n}} |\lambda_1(\io|_{t_{n}})|^{-1}))} = \frac{\bmu(E_{n}(\io))}{\bmu(B^{\Pi}(\io, 2^{-n})\cap [\io|_{t_{n}}])}
    \end{equation}
    for every $\io\in\Gamma^\N$ and $n\in\N$. To establish \eqref{eq-convenient}, we first note that by reasoning similarly as in \eqref{eq-selfaffinity} we have
    \begin{equation}\label{eq-sameasure} \bmu(E_{n}(\io)) 
= \bmu([\io|_{t_{n}}])\mu(\varphi_{\io|_{t_n}}^{-1}\Pi(E_{n}(\io))).
    \end{equation}
    Since $\varphi_{\io|_{t_n}}^{-1}\Pi(E_{n}(\io)) = \pi_\mathtt{x}^{-1}\pi_\mathtt{x} \varphi_{\io|_{t_n}}^{-1}\Pi(E_{n}(\io))$, this shows that the nominators of \eqref{eq-convenient} are equal up to the constant $\bmu([\io|_{t_{n}}])$. Regarding the denominators, first note that $\sigma(\Pi^{-1}(A) \cap [i]) = \Pi^{-1}( \varphi_i^{-1}(A))$ for any $A\subseteq \R^2$ and $i\in\Gamma$, whence
    \begin{align}\label{eq-sameasure2}
    \bmu(B^\Pi(\io,2^{-n})\cap [\io|_{t_{n}}])) &= \bmu([\io|_{t_{n}}]) \bmu(\sigma^{n}(\Pi^{-1} B(\Pi(\io),2^{-n}) \cap [\io|_{t_{n}}])) \nonumber\\
    &= \bmu([\io|_{t_{n}}]) \bmu(\Pi^{-1} \varphi_{\io|_{t_{n}}}^{-1} B(\Pi(\io, 2^{-n})))\nonumber \\
    &= \bmu([\io|_{t_{n}}]) \pi_\mathtt{x}\mu(B(\pi_\mathtt{x}\Pi(\sigma^{t_n}\io), 2^{-n} |\lambda_1(\io_{t_{n}})|^{-1})).
    \end{align}
    In the last equality we used the fact that 
    $$
    \varphi_{\io|_{t_{n}}}^{-1} B(\Pi(\io), 2^{-n}))) \cap [-1,1]^2 = B(\pi_\mathtt{x}\Pi(\sigma^{t_n}\io), 2^{-n} |\lambda_1(\io_{t_{n}})|^{-1})\times [-1,1]
    $$
    which is readily verified using the fact that $\Pi(E_{n}(\io)) \subseteq B(\Pi(\io), 2^{-n})$ and recalling how $\varphi_{\io|_{t_n}}^{-1}$ acts on the square $\Pi(E_{n}(\io))$. Combining \eqref{eq-sameasure} and \eqref{eq-sameasure2} yields \eqref{eq-convenient}.
    
    Now, Lemma \ref{lemma-bigdensity1} together with \eqref{eq-convenient} asserts that for any $\varepsilon>0$ and large enough $N\in\N$, there exists $c>0$ such that $\bmu$-almost every $\io\in\Gamma^\N$, there exists a set $\cN_\varepsilon^1\subseteq \N$ with $\liminf_{m\to\infty} m^{-1}\#(\cN_\varepsilon^1\cap[0,m]) \geq1-\varepsilon/2$ such that for every $n\in\cN_\varepsilon^1$, 
    \begin{equation}\label{eq-subsetoflargemeasure}
    \frac{\pi_\mathtt{x}\mu(\pi_\mathtt{x}(\varphi_{\io|_{t_{nN}}}^{-1} \Pi(E_{nN}(\io))))}{\pi_\mathtt{x}\mu(B(\pi_\mathtt{x}\Pi(\sigma^{t_{nN}}\io), 2^{-nN} |\lambda_1(\io|_{t_{nN}})|^{-1}))} \geq c.
    \end{equation}

    Let us fix such $\varepsilon, c$ and $N$ for the rest of the proof. Let 
    $$
    f_c(\io,r) := \sup_{U\subseteq B(\pi_\mathtt{x}\Pi(\io), r)} \dlp (\mu^{U\times [-1,1]},\ (\pi_\mathtt{x}\mu\times\mu_{\io})^{U\times [-1,1]})
    $$
    where the supremum is taken over those measurable sets $U\subseteq B(\pi_\mathtt{x}\Pi(\io), r)$ with $\pi_\mathtt{x}\mu(U) \geq c\cdot\pi_\mathtt{x}\mu(B(\pi_\mathtt{x}\Pi(\io), r))$. By the Lebesgue-Besicovitch differentiation theorem applied for the function $\io\mapsto \delta_{\pi_\mathtt{x}\Pi(\io)}\times \mu_\io$, for $\bmu$-almost every $\io\in\Gamma^\N$ we have
    \begin{equation}\label{eq-closetoaslice}
\lim_{r\to 0}  f_c(\io,r) =0.
   \end{equation}
   Under the assumption $\lambda_1^\mu>\lambda_2^\mu$, $\lim_{n\to\infty}2^{-n}|\lambda_1(\io|_{t_n})|^{-1} = 0$ as $n\to\infty$ for $\bmu$-almost every $\io\in\Gamma^\N$. Therefore it follows from \eqref{eq-closetoaslice} and Lemma \ref{lemma-equidistribution} that 
   $$
\lim_{m\to\infty}\frac{1}{m}\sum_{n=1}^m f_c(\sigma^{t_{nN}}\io, 2^{-nN} |\lambda_1(\io|_{t_{nN}})|^{-1}) = 0
   $$
for $\bmu$-almost every $\io\in\Gamma^\N$. In particular, for any $\varepsilon'>0$ there exists a set $\cN_\varepsilon^2\subseteq \N$ with $\liminf_{m\to\infty} m^{-1}\#(\cN_\varepsilon^2\cap[0,m]) \geq1-\varepsilon/2$ such that for every $n\in\cN_\varepsilon^2$, 
\begin{equation}\label{eq-closetoaslice2}
f_c(\sigma^{t_{nN}}\io, 2^{-nN} |\lambda_1(\io|_{t_{nN}})|^{-1}) < \varepsilon'.
\end{equation}
   \begin{comment}
   \begin{align}\label{eq-closetoaslice2}
\lim_{m\to\infty}\frac{1}{m}\sum_{n=1}^m \dlp (&\mu^{B(\pi_\mathtt{x}\Pi(\sigma^{t_{nN}}\io), 2^{-nN} |\lambda_1(\io|_{t_{nN}})|)\times [-1,1]}, \nonumber\\
&\qquad\qquad (\pi_{\mathtt{x}}\mu \times \mu_{\sigma^{t_{nN}}\io})^{B(\pi_\mathtt{x}\Pi(\sigma^{t_{nN}}\io), 2^{-nN} |\lambda_1(\io|_{t_{nN}})|)\times [-1,1]}) = 0
   \end{align}
   \end{comment}
    Therefore, combining \eqref{eq-subsetoflargemeasure} and \eqref{eq-closetoaslice2}, for $\bmu$-almost every $\io\in\Gamma^\N$ and every $n\in\cN_\varepsilon := \cN_\varepsilon^1\cap \cN_\varepsilon^2$ we have
    \begin{align}\label{eq-closetoaslice3}
    &\dlp (\mu^{\varphi_{\io|_{t_{nN}}}^{-1} \Pi(E_{nN}(\io)))},\ (\pi_{\mathtt{x}}\mu\times \mu_{\sigma^{t_{nN}}\io})^{\varphi_{\io|_{t_{nN}}}^{-1} \Pi(E_{nN}(\io)))}) \\
    \leq\ &f_c(\sigma^{t_{nN}}\io, 2^{-nN} |\lambda_1(\io|_{t_{nN}})|^{-1})\nonumber\\
    \leq\ &\varepsilon'.\nonumber
    \end{align}
    Since the operations 
    $$
    (\cdot)^{\varphi_{\io|_{t_{nN}}}^{-1} \Pi(E_{nN}(\io))}\qquad \text{and}\qquad S_{nN}  \varphi_{\io|_{t_{nN}}}(\cdot)_{\varphi_{\io|_{t_{nN}}}^{-1} \Pi(E_{nN}(\io))}
    $$
    differ only by a translation and a horizontal scaling, it follows from \eqref{eq-closetoaslice3} and \eqref{eq-selfaffinity} that for every $n\in\cN_\varepsilon$,
    $$
\dlp (S_{nN} \Pi\bmu_{E_{nN}(\io)},\ S_{nN} \varphi_{\io|_{t_{nN}}}(\pi_{\mathtt{x}}\mu  \times \mu_{\sigma^{t_{nN}}\io})_{\varphi_{\io|_{t_{nN}}}^{-1} \Pi(E_{nN}(\io))}) \leq \varepsilon'.
    $$
    This completes the proof.
\end{proof}

As the final preparation for the proof of Theorem \ref{theorem-main}, we present a lower bound for the entropies of projections of $\pi_{\mathtt{x}}\mu\times\mu_\io$. 

\begin{proposition}\label{prop-productprojections}
    Let $\mu$ be as in the statement of Theorem \ref{theorem-main}. For $\bmu$-almost every $\io\in\Gamma^\N$ and any $\varepsilon,M>0$, there exists $N_0 \in\N$ such that for any $N\geq N_0$ and any $\theta\in[0,M]$, 
    $$
\frac{1}{N} H_N(\pi_\theta(\pi_{\mathtt{x}}\mu\times \mu_\io)) \geq \min \lbrace 1, \dimh\mu\rbrace - \varepsilon.
    $$
\end{proposition}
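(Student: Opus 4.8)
The plan is to run the method of local entropy averages a \emph{second} time, now applied to the product measure $\pi_{\mathtt x}\mu\times\mu_\io$ and to the projection $\pi_\theta$ at an auxiliary scale $N'$ much smaller than $N$, and then to compare the resulting average of small-scale entropies with a space-average over the suspension $Z$ and the map $G$ from the notation table. Fix $\io$ in the full-$\bmu$-measure set carrying all the almost-sure statements used below. Since $\pi_{\mathtt x}\mu$ is a self-similar measure on $\R$ it is exact dimensional, and by Theorem \ref{dimensionconservation} the fibre measure $\mu_\io$ is exact dimensional with $\dimh\mu_\io=\dimh\mu-\dimh\pi_{\mathtt x}\mu$; hence $\pi_{\mathtt x}\mu\times\mu_\io$ is exact dimensional of dimension $\dimh\mu$. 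Conditioning this product on either coordinate and using concavity of entropy (Lemma \ref{lemma-concavityofentropy}) already gives, for $N$ large and uniformly in $\theta$, $\tfrac1N H_N(\pi_\theta(\pi_{\mathtt x}\mu\times\mu_\io))\geq\max\{\dimh\pi_{\mathtt x}\mu,\dimh\mu_\io\}-\varepsilon$, so the statement is immediate when $\dimh\pi_{\mathtt x}\mu=1$ or $\dimh\mu_\io=1$ and the task is to ``add'' the two dimensions (or to reach $1$). The two reproduction facts we use are: zooming into an $x$-cylinder of $\pi_{\mathtt x}\mu$ and rescaling returns $\pi_{\mathtt x}\mu$ up to a translation (self-similarity), and zooming into a $y$-cylinder of $\mu_\io$ and rescaling returns $S_w\mu_{\io'}$ for some $(\io',w)\in Z$ up to a translation (the self-affine relation between fibre measures, cf. \eqref{eq-selfaffinity}).

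Write $N=LN'$ and apply a local entropy averages estimate (the finite-scale analogue of Theorem \ref{thm-localentropyaverages}, in the spirit of \cite{HochmanShmerkin2012}), at scale $N'$, along a filtration adapted to the product structure that does \emph{not} depend on $\theta$ (so $\theta$ enters only through the final projection):
$$
\frac1N H_N\bigl(\pi_\theta(\pi_{\mathtt x}\mu\times\mu_\io)\bigr)\ \geq\ \frac1L\sum_{l=0}^{L-1}\frac1{N'}H_{N'}\bigl(\pi_\theta(\text{$l$-th magnification of }\pi_{\mathtt x}\mu\times\mu_\io)\bigr)\ -\ o_{N'}(1)-o_{L}(1).
$$
By the two reproduction facts — up to a $\dlp$-small error which, by Lemma \ref{lemma-continuityofentropy} and the L\'evy--Prokhorov continuity of $\nu\mapsto H_{N'}(\pi_\theta\nu)$, costs only $o_{N'}(1)$ after normalization — the $l$-th magnification equals $G(\io_l,w_l)=\pi_{\mathtt x}\mu\times S_{w_l}\mu_{\io_l}$, so that $\pi_\theta(\text{$l$-th magnification})\approx\pi_\theta G(\io_l,w_l)=\pi_{\theta+w_l}(\pi_{\mathtt x}\mu\times\mu_{\io_l})$, where $(\io_l,w_l)=\mathcal T_{-\log|\lambda_1(\jo|_l)|}(\io,0)$ is the suspension semi-flow $(\mathcal T_s)$ on $Z$ sampled along the Birkhoff times of the $x$-roof $-\log|\lambda_1(\cdot|_1)|$ — and is independent of $\theta$. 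The crucial point, and the only place the irrationality hypothesis is used, is that $(\io_l,w_l)_l$ equidistributes for $(\bmu\times\mathcal L)_Z$. This is proved exactly as Lemma \ref{lemma-equidistribution}: the skew product over $(\Gamma^\N,\sigma,\bmu)$ with fibre map $\mathcal T_{-\log|\lambda_1(j_0)|}$ on $Z$ is ergodic by Proposition \ref{prop-suspensionergodic}, because $((i,i,\dots),0)$ is $(\mathcal T_s)$-periodic with period $-\log|\lambda_2(i)|$ and, by the hypothesis of Theorem \ref{theorem-main}, some ratio $\tfrac{\log|\lambda_s(i)|}{\log|\lambda_t(j)|}$ is irrational; the four positions $(s,t)\in\{1,2\}^2$ are handled by permuting the roles of the two coordinates and the two roof functions (and, as in Lemma \ref{lemma-equidistribution}, by a choice of base for the logarithm). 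Since for a fixed $N'$ the family $\{(\io',w)\mapsto\tfrac1{N'}H_{N'}(\pi_{\theta+w}(\pi_{\mathtt x}\mu\times\mu_{\io'}))\}_{\theta\in[0,M]}$ is uniformly bounded and equicontinuous in $\theta$ (Lemma \ref{lemma-continuityofentropy}), Birkhoff's theorem gives, uniformly over $\theta\in[0,M]$,
$$
\liminf_{L\to\infty}\frac1L\sum_{l<L}\frac1{N'}H_{N'}\bigl(\pi_{\theta+w_l}(\pi_{\mathtt x}\mu\times\mu_{\io_l})\bigr)\ \geq\ \int_Z\frac1{N'}H_{N'}\bigl(\pi_{\theta+w}(\pi_{\mathtt x}\mu\times\mu_{\io'})\bigr)\,d(\bmu\times\mathcal L)_Z(\io',w).
$$

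It remains to show that for all large $N'$ and uniformly in $\theta\in[0,M]$ this integral is $\geq\min\{1,\dimh\mu\}-\varepsilon$, which is what breaks the apparent circularity. Disintegrating $(\bmu\times\mathcal L)_Z$ over $\io'\in\Gamma^\N$ turns the inner integral into an average of $\tfrac1{N'}H_{N'}(\pi_s(\pi_{\mathtt x}\mu\times\mu_{\io'}))$ over the window of angles $s\in[\theta,\theta-\log|\lambda_2(i_0')|]$. For $\bmu$-a.e. $\io'$ the measure $\pi_{\mathtt x}\mu\times\mu_{\io'}$ is exact dimensional of dimension $\dimh\mu$, so Marstrand's projection theorem gives $\dimh\pi_s(\pi_{\mathtt x}\mu\times\mu_{\io'})=\min\{1,\dimh\mu\}$ for Lebesgue-a.e. $s$, and hence $\liminf_{N'}\tfrac1{N'}H_{N'}(\pi_s(\pi_{\mathtt x}\mu\times\mu_{\io'}))\geq\min\{1,\dimh\mu\}$ for a.e. $s$ by \cite[Theorem 1.3]{FanLauRao2002}; an Egorov argument in $s$, then in $\io'$, together with the nonnegativity of entropy, upgrades this to the desired window-average bound uniformly in $\theta$ for all large $N'$. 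This is precisely where non-conformality pays off: the magnification process is recorded by a genuine \emph{flow} on $Z$, so the continuous ($\mathcal L$-)direction of $(\bmu\times\mathcal L)_Z$ integrates out the exceptional directions of Marstrand's theorem, and no ``projections in every direction'' statement is needed for the non-self-similar measures $\pi_{\mathtt x}\mu\times\mu_{\io'}$. Combining the three estimates and choosing first $N'$, then $L$, large enough completes the proof.

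The hard part is the equidistribution step: constructing the suspension $Z$ and the skew product so that the magnifications of $\pi_{\mathtt x}\mu\times\mu_\io$ are faithfully modelled by an orbit equidistributing for a flow-invariant — hence, in the $w$-variable, absolutely continuous — measure with the correct weights, and verifying that the irrationality hypothesis, in whichever of its four guises it holds, always yields the ergodicity demanded by Proposition \ref{prop-suspensionergodic}. The remaining steps are bookkeeping with the entropy lemmas of Section 2 and with L\'evy--Prokhorov continuity.
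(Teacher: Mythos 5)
Your overall architecture coincides with the paper's: symbolic magnification of $\pi_{\mathtt{x}}\mu\times\mu_\io$ along a product filtration, a product representation of the magnified pieces, the suspension $Z$ with a skew product driven by the $x$-itinerary, ergodicity from the irrationality hypothesis, Marstrand's theorem plus Theorem \ref{dimensionconservation}, Fubini and Egorov for the space average, and a chain-rule/concavity decomposition of $H_N$ into an average over magnifications. The genuine gap sits at exactly the step the proposition exists for, namely uniformity in $\theta$. You invoke Birkhoff's theorem along the orbit of $(\jo,(\ko,0))$ (in fact you write the fibre orbit as starting from $(\io,0)$, whereas the magnified fibre measures are affine copies of $\mu_{\sigma^{\tau_\ell}\ko}$ for $\bmu_\io$-typical $\ko$, to be averaged over $\ko$ by concavity). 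Birkhoff's theorem only gives convergence for $\bmu\times(\bmu\times\mathcal{L})_Z$-almost every initial point; after disintegrating, this yields, for almost every $(\jo,\ko)$, the entropy bound for Lebesgue-almost every initial height, equivalently for Lebesgue-almost every $\theta$ --- not for the deterministic height $0$ and \emph{every} $\theta\in[0,M]$, which is what is claimed and what the application needs (the directions $\theta(\io,\jo,n)$ in the main proof are uncontrollable, so an a.e.-$\theta$ statement is useless). Equicontinuity in $\theta$ reduces $[0,M]$ to a finite net, but it does not make the specific initial points generic: they lie in a section of zero $(\bmu\times\mathcal{L})_Z$-measure. Closing this requires a further argument, e.g.\ the paper's: fix a quantitative Egorov set $V$ of good initial points, use flow invariance of $(\bmu\times\mathcal{L})_Z$ and Markov's inequality to produce, for a large set of pairs $(\jo,\ko)$, a good height within $2^{-n}$ of any prescribed $\theta$, and transfer the bound via Lemma \ref{lemma-continuityofentropy}; or, equivalently, compare the orbit started at height $\theta$ with the orbit started at a nearby generic height while controlling the frequency of roof crossings. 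Your sketch simply asserts the conclusion at this point.

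Two smaller issues: ergodicity of the skew product $(\jo,(\ko,t))\mapsto(\sigma\jo,(\ko,t-\log|\lambda_1(\jo|_1)|))$ does not follow from Proposition \ref{prop-suspensionergodic} alone, which concerns a single time-$\beta$ map of the suspension; one also needs a skew-product (random) ergodic theorem, as the paper gets from Kakutani. And the ``reproduction fact'' for the fibre measures is not \eqref{eq-selfaffinity} (a statement about $\mu$ itself) but the dynamical self-similarity of the disintegration, $(\bmu_\io)_{[i]}=(\bmu_{\sigma\io}\circ\sigma)_{[i]}$, which is what identifies the rescaled fibre piece with $S_w\mu_{\sigma^{\tau_\ell}\ko}$. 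These are fixable with the tools already in the paper, but the uniformity-in-$\theta$ step above is the substantive missing piece.
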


The weight of the proposition lies in the fact that the number $N$ works simultaneously for all $\theta$ in a large interval. Since its proof is somewhat independent of that of the main result and requires some work, we first conclude the proof of Theorem \ref{theorem-main} and then proceed to prove Proposition \ref{prop-productprojections}.

\begin{proof}[Proof of Theorem \ref{theorem-main} assuming $\lambda_1^\mu>\lambda_2^\mu$]
Let $\mu$ be a self-affine measure associated to an IFS $\Phi = \lbrace \varphi_i\rbrace_{i\in\Gamma}$ as in the statement. We first point out that it suffices to prove the theorem for $\theta = 0$. Indeed, for any $\theta\in\R$, $\pi_\theta(x,y) = \pi_0(x, S_\theta y)$, and the push-forward of $\mu$ through $(x,y)\mapsto (x, S_{\theta}y)$ is again a self-affine measure which satisfies the conditions of the theorem. In this proof, it is convenient to consider the quantity 
$$
\tilde{H}_N(\mu) := \int_0^1 H_N(\delta_x*\mu)\,dx
$$
in place of $H_N(\mu)$ since the function $\mu\mapsto \tilde{H}_N(\mu)$ is continuous. By Lemma \ref{lemma-continuityofentropy}, $|\tilde{H}_N(\mu) - H_N(\mu)|\leq O(1)$ for any probability measure $\mu$. 

Let $\varepsilon>0$, and let $m, N\in\N$ be large with respect to $\varepsilon$; through the course of the proof, we will see how large they have to be. Recall that by Theorem \ref{thm-localentropyaverages} it is enough to find a lower bound for the entropy of the measure $\pi_0 S_{nN} \Pi \bmu_{E_n(\io)}$, for almost every $\io$ and ``most'' $n$. Applying Proposition \ref{prop-productstructure} with small enough $\varepsilon'$ and using continuity of $\tilde{H}_N(\mu)$, we have
\begin{align}\label{eq-firstestimate}
&\frac{1}{m}\sum_{n=1}^m \frac{1}{N} \tilde{H}_N(\pi_0 S_{nN} \Pi\bmu_{E_{nN}(\io)}) \nonumber \\
\geq\ &\frac{1}{m}\sum_{n=1}^m \frac{1}{N} \tilde{H}_N\left(\pi_0S_n(\pi_{\mathtt{x}}\varphi_{\io|_{t_n}}\mu \times \varphi_{\io|_{t_n}}\mu_{\sigma^{t_n}\io})_{\Pi(E_n(\io))})\right) - 2\varepsilon
\end{align}
for $\bmu$-almost every $\io\in\Gamma^\N$, when $N$ and $m$ are large enough. Just by iterating the relation $\mu = \int \varphi_{\io|_1} \mu\,d\bmu(\io)$, we have 
\begin{align*}
&(\pi_{\mathtt{x}}\varphi_{\io|_{t_n}}\mu \times \varphi_{\io|_{t_n}}\mu_{\sigma^{t_n}\io})_{\Pi(E_n(\io))} \\
&\qquad = \bmu(E_n(\io))^{-1} \int_{\jo\in E_{nN}(\io)} (\pi_{\mathtt{x}}\varphi_{\jo|_{n(\jo)}}\mu \times \varphi_{\io|_{t_n}}\mu_{\sigma^{t_n}\io}))|_{\Pi(E_{nN}(\io))} \,d\bmu(\jo)
\end{align*}
for any measurable $\jo \mapsto n(\jo)\in\N$ such that $[\jo|_{n(\jo)}] \subseteq [\io|_{t_n}]$. By Lemma \ref{lemma-bigdensity2}, those $\jo$ in the area of integration which do not belong to $\bigcup_{\jo \in E_{nN}^\delta(\io)}[\jo]$ carry negligible measure for most $n$, as long as $\delta>0$ is small enough. In particular, 
\begin{align*}
\frac{1}{m}\sum_{n=1}^m \dlp\big(& S_n(\pi_{\mathtt{x}}\varphi_{\io|_{t_n}}\mu \times \varphi_{\io|_{t_n}}\mu_{\sigma^{t_n}\io})_{\Pi(E_n(\io))},\\
&\qquad\qquad\sum_{\jo\in E_{nN}^\delta(\io)} q_\jo \cdot S_{nN} (\pi_{\mathtt{x}}\varphi_{\jo}\mu \times \varphi_{\io|_{t_n}}\mu_{\sigma^{t_n}\io})\big) < \varepsilon
\end{align*}
where $q_\jo := \frac{\bmu([\jo])}{\bmu(E_{nN}^\delta(\io))}$, for $\bmu$-almost every $\io\in\Gamma^\N$ and large enough $m$. Plugging this into \eqref{eq-firstestimate} and using concavity of entropy, we find that
\begin{align}\label{eq-concavityapplication}
&\frac{1}{m}\sum_{n=1}^m \frac{1}{N} \tilde{H}_N(\pi_0 S_{nN} \Pi\bmu_{E_{nN}(\io)}) \nonumber \\
\geq\ &\frac{1}{m}\sum_{n=1}^m \sum_{\jo\in E_{nN}^\delta(\io)}  \frac{q_\jo}{N} \tilde{H}_N\left(\pi_0 S_{nN} (\pi_{\mathtt{x}}\varphi_{\jo}\mu \times \varphi_{\io|_{t_{nN}}}\mu_{\sigma^{t_{nN}}\io}))\right)-3\varepsilon.
\end{align}
Here
\begin{align}\label{eq-notranslations}
    &\frac{1}{N} \tilde{H}_N\left(\pi_0 S_{nN}(\pi_{\mathtt{x}}\varphi_{\jo}\mu \times \varphi_{\io|_{t_{nN}}}\mu_{\sigma^{t_{nN}}\io})_{\Pi(E_{nN}(\io))})\right) \nonumber \\
    &\qquad \geq \frac{1}{N}\tilde{H}_N\left(\pi_0 S_{nN} (S_{\log|\lambda_1(\jo)|}\pi_{\mathtt{x}}\mu \times S_{\log|\lambda_2(\io|_{t_{nN}})|}\mu_{\sigma^{t_{nN}}\io})\right) - O(1/N)
\end{align}
for each $\jo\in E_{nN}^\delta(\io)$, using Lemma \ref{lemma-continuityofentropy} to get rid of the translations involved in $\varphi_\jo$ and $\varphi_{\io|_{t_n}}$. Next we apply the identity
\begin{align}\label{eq-piidentity}
&\pi_0 S_{nN} (S_{\log|\lambda_1(\jo)|} x , S_{\log|\lambda_2(\io|_{t_{nN}})|}y) \nonumber \\
&\qquad = S_{nN + \log|\lambda_1(\jo)|}\circ\pi_{\log|\lambda_2(\io|_{t_{nN}})|-\log|\lambda_1(\jo)|}(x,y)
\end{align}
which is immediate from the definition of $\pi_\theta$. Inserting both \eqref{eq-notranslations} and \eqref{eq-piidentity} into \eqref{eq-concavityapplication}, we find that
\begin{align}\label{eq-randomdirection}
&\frac{1}{m}\sum_{n=1}^m \frac{1}{N} \tilde{H}_N(\pi_0 S_{nN} \Pi\bmu_{E_{nN}(\io)}) \\
\geq\ &\frac{1}{m}\sum_{n=1}^m  \sum_{\jo\in E_{nN}^\delta(\io)}\frac{q_\jo}{N} \tilde{H}_N(S_{nN + \log|\lambda_1(\jo)|}(\pi_{\log|\lambda_2(\io|_{t_{nN}})|-\log|\lambda_1(\jo)|} (\pi_{\mathtt{x}}\mu \times \mu_{\sigma^{t_{nN}}\io}))) - 4\varepsilon.\nonumber
\end{align}
Since $\log \delta \leq nN + \log|\lambda_1(\jo)| \leq 0$ for every $\jo\in E_{nN}^\delta(\io)$, by Lemma \ref{lemma-continuityofentropy} we may replace the scaling map $S_{nN + \log|\lambda_1(\jo)|}$ by the identity in \eqref{eq-randomdirection}, at the cost of adding one more $\varepsilon$ to the right-hand side. Writing 
$$
\theta(\io,\jo,n):= \log|\lambda_2(\io|_{t_{nN}})|-\log|\lambda_1(\jo)|,
$$ 
we have ended up with the estimate
\begin{align}\label{eq-conclusion}
    &\frac{1}{m}\sum_{n=1}^m \frac{1}{N} \tilde{H}_N(\pi_0 S_{nN} \Pi\bmu_{E_{nN}(\io)}) \nonumber \\
\geq\ &\frac{1}{m}\sum_{n=1}^m  \sum_{\jo\in E_{nN}^\delta(\io)}\frac{q_\jo}{N} \tilde{H}_N(\pi_{\theta(\io,\jo,n)} (\pi_{\mathtt{x}}\mu \times \mu_{\sigma^{t_{nN}}\io})) - 5\varepsilon
\end{align}
for $\bmu$-almost every $\io\in\Gamma^\N$ and every large enough $m$. Observe that 
\begin{alignat*}{2}
\theta(\io,\jo,n) &\geq -n + \min_{i\in\Gamma}\log |\lambda_2(i)|^{-1} + n -\log \delta &&> 0\ \text{and} \\
\theta(\io,\jo,n) &\leq -n + n + \max_{i\in\Gamma}\log |\lambda_2(i)|^{-1} -\log \delta &&\leq -2\log\delta
\end{alignat*}
as long as $\delta$ is small enough. 

By Proposition \ref{prop-productprojections} and Egorov's theorem, if $N$ is large enough, there exists a set $F\subseteq \Gamma^\N$ such that $\bmu(F)\geq 1-\varepsilon$ and for every $\io\in F$ and $\eta\in [0,-2\log\delta]$, 
\begin{equation}\label{eq-projectionset}
\frac{1}{N} \tilde{H}_N(\pi_\eta(\pi_{\mathtt{x}}\mu\times\mu_\io))\geq \min \lbrace 1,\dimh\mu \rbrace-\varepsilon.
\end{equation}
Continuing from \eqref{eq-conclusion},
\begin{align}
&\frac{1}{m}\sum_{n=1}^m  \sum_{\jo\in E_{nN}^\delta(\io)}\frac{q_\jo}{N} \tilde{H}_N(\pi_{\theta(\io,\jo,n)} (\pi_{\mathtt{x}}\mu \times \mu_{\sigma^{t_{nN}}\io})) \nonumber\\
\geq\ &\frac{1}{m}\sum_{n=1}^m \delta_{\sigma^{t_{nN}}\io}(F)  \sum_{\jo\in E_{nN}^\delta(\io)}\frac{q_\jo}{N} \tilde{H}_N(\pi_{\theta(\io,\jo,n)} (\pi_{\mathtt{x}}\mu \times \mu_{\sigma^{t_{nN}}\io})) \nonumber\\
\geq\ &\frac{1}{m}\sum_{n=1}^m \delta_{\sigma^{t_{nN}}\io}(F)  \sum_{\jo\in E_{nN}^\delta(\io)} q_\jo (\min\lbrace 1,\dimh \mu\rbrace - \varepsilon)  \label{eq-keypart2}\\
\geq\ &\min\lbrace 1,\dimh\mu\rbrace - 3\varepsilon.\label{eq-keypart3} 
\end{align} 
In \eqref{eq-keypart2} we used \eqref{eq-projectionset} and in \eqref{eq-keypart3} we used Lemma \ref{lemma-equidistribution} and the fact that $\sum_{\jo\in E_{nN}^\delta(\io)} q_\jo = 1$. By \eqref{eq-conclusion} and Theorem \ref{thm-localentropyaverages}, this completes the proof of Theorem \ref{theorem-main}.
\end{proof}

\begin{remark} 
It is the estimate \eqref{eq-keypart2} in which it is crucial that Proposition \ref{prop-productprojections} gives the desired lower bound \emph{uniformly} for every $\theta$ in an arbitrarily large fixed interval. Indeed, the exact value of $\theta(\io,\jo,n)$ is something we do not know how to control as $\jo$ varies in $E_{nN}^\delta(\io)$. 
\end{remark}

\subsection{The case $\lambda_1^\mu= \lambda_2^\mu$}\label{case-equal}

In the case $\lambda_1^\mu > \lambda_2^\mu$, the cylinders $\Pi([\io|_n])$ were ``vertically flat'' for almost all $\io$ and all large enough $n$, which allowed us to deduce the product structure of the measures $\Pi\bmu_{E_n(\io)}$ in Proposition \ref{prop-productstructure}. In the case $\lambda_1^\mu = \lambda_2^\mu$ this ``uniform vertical flatness'' is no longer the case; However, \cite[Theorem 1]{Robbins1953} asserts that the sequence
$$
\left( \log \frac{|\lambda_1(\io|_k)|}{|\lambda_2(\io|_k)|} \right)_k= \left(\sum_{\ell=1}^k \log \frac{|\lambda_1(\sigma^\ell \io|_1)|}{|\lambda_2(\sigma^\ell \io|_1)|}\right)_k
$$
equidistributes (in a sense) for the Lebesgue measure on $\R$, in particular, for any $M>0$, 
$$
\lim_{n\to\infty} \frac{1}{n}\#\left\lbrace 1\leq k \leq n:\ \left| \log \frac{|\lambda_1(\io|_k)|}{|\lambda_2(\io|_k)|}\right| \leq M \right\rbrace = 0.
$$
In other words, the number $|\lambda_1(\io|_k)||\lambda_2(\io|_k)|^{-1}$ (the ``eccentricity'' of the rectangle $\Pi([\io|_k])$) is usually far from one in the sense that for any $M>0$, with full frequency, either 
\begin{equation}\label{eq-large}
|\lambda_1(\io|_n)| > 2^{M}|\lambda_2(\io|_n)|\ \text{or}\ |\lambda_2(\io|_n)| > 2^{M}|\lambda_1(\io|_n)|.
\end{equation}
In particular, with full frequency, the rectangle $\Pi([\io|_n])$ is either vertically or horizontally flat, and so we are essentially reduced to a combination of the cases $\lambda_1^\mu > \lambda_2^\mu$ and $\lambda_2^\mu > \lambda_1^\mu$. Indeed, in Proposition \ref{prop-productstructure}, for every $n$ we have to switch between $\pi_\mathtt{x}$ and $\pi_\mathtt{y}$ depending on which one of the numbers $|\lambda_1(\io|_n)|$ and $|\lambda_2(\io|_n)|$ dominates in \eqref{eq-large}. Proposition \ref{prop-productprojections} does not assume anything regarding the Lyapunov exponents of $\mu$ and so works also if $\pi_\mathtt{x}$ is replaced by $\pi_\mathtt{y}$ and $\mu_\io$ by a conditional measure supported on a horizontal line segment. Regarding the proof of Theorem \ref{theorem-main}, one first duplicates it for the case $\lambda_2^\mu> \lambda_1^\mu$, and then in the case $\lambda_1^\mu=\lambda_2^\mu$, the average $\frac{1}{m}\sum_{\ell=1}^m \frac{1}{N} H_N(\pi_\theta \Pi\bmu_{E_{nN}(\io)})$ is split into two parts: The average of those terms in which $\lambda_1(\io|_{nN}) \gg \lambda_2(\io|_{nN})$ is handled as in the case $\lambda_1^\mu > \lambda_2^\mu$, and the average of those terms in which $\lambda_2(\io|_{nN}) \gg \lambda_1(\io|_{nN})$ is handled as in the case $\lambda_2^\mu> \lambda_1^\mu$. We leave the details to the interested reader.

\section{Proof of Proposition \ref{prop-productprojections}}

It remains to prove Proposition \ref{prop-productprojections}. We begin by introducing some additional notation. Let $\bmu = \int \bmu_\io \,d \bmu(\io)$ denote the disintegration of $\bmu$ with respect to $\pi_{\mathtt{x}}\circ \Pi: \Gamma^\N \to \R$. Note that by uniqueness of disintegration, for almost every $\io$, $\Pi \bmu_\io = \delta_{\pi_\mathtt{x}\Pi(\io)}\times \mu_\io$. Let $\Gamma_\io := \Pi^{-1}(\pi_\mathtt{x}^{-1}(\pi_\mathtt{x}((\Pi(\io)))))\subset \Gamma^\N$ be the ``symbolic vertical slice through $\io$'', so that ${\rm supp}\,\bmu_\io\subseteq \Gamma_\io$. Let $\tilde{\Pi}:\Gamma^\N\times \Gamma^\N \to \R^2$ denote the map $\tilde{\Pi}(\jo,\ko) = (\pi_{\mathtt{x}} \Pi(\jo), \pi_{\mathtt{y}} \Pi(\ko))$, where $\pi_{\mathtt{y}}:\R^2\to\R$ denotes the orthogonal projection to the $y$-axis (identified with $\R$). With this notation, the following equality is immediate for $\bmu$-almost every $\io\in\Gamma^\N$:
$$
\tilde{\Pi} (\bmu \times \bmu_\io) = \pi_{\mathtt{x}}\mu\times\mu_\io.
$$

We will next define partitions of $\Gamma^\N\times\Gamma^\N$ similar to the partitions $\cE_n$ of the previous section. However, it is now more convenient to build for every $\theta\in\R$ a partition which produces rectangles of eccentricity $\approx 2^{-\theta}$ instead of approximate squares: Define the stopping time $\tau_\ell$ on $\Gamma^\N\times \Gamma^\N\times \R$,  
\begin{equation}\label{eq-stoppingtime}
    \tau_\ell = \tau_\ell(\jo,\ko,\theta) = \max\lbrace n\in\N:\ |\lambda_2(\ko|_n)|\geq 2^{-\theta}|\lambda_1(\jo|_\ell)| \rbrace
\end{equation}
for every $\ell\in\N$. For every $(\jo, \ko)\in\Gamma^\N\times \Gamma^\N$ and $n\in \N$, write $F_n^\theta(\jo,\ko) = [\jo|_{n}]\times [\ko|_{\tau_n}]$. The set $\tilde{\Pi} (F_n^\theta(\jo,\ko)) \subset \R^2$ is a rectangle of width $|\lambda_1(\jo|_n)|$ and height $|\lambda_2(\ko|_{\tau_n})|\approx 2^{-\theta}|\lambda_1(\jo|_n)|$.

Most of the work in proving Proposition \ref{prop-productprojections} goes towards proving the following local variant. 
\begin{proposition}\label{prop-localvariant}
For any $\varepsilon,M>0$, large enough $n\in\N$ and small enough $\delta>0$, there exist $m(\delta)\in\N$ and a set $U\subseteq \Gamma^\N\times\Gamma^\N$ with $\bmu\times\bmu(U)\geq 1-\delta$ such that for any $(\jo,\ko)\in U$, $\theta\in [0,M]$ and $m\geq m_0$,
    $$
\frac{1}{m} \sum_{\ell=1}^m \frac{1}{n} H_n(\pi_\theta S_{-\log|\lambda_1(\jo|_{\ell})|} \tilde{\Pi} (\bmu \times \bmu_{\ko})_{F_\ell^\theta(\jo,\ko)}) \geq \min\lbrace 1,\dimh\mu\rbrace-\varepsilon.
$$
\end{proposition}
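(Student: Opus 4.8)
The plan is to reduce the statement to an ergodic-theoretic equidistribution statement combined with the uniform positivity of fiber entropy for projections of product measures $\pi_\theta(\pi_\mathtt{x}\mu\times\nu)$ where $\nu$ ranges over conditional measures $\mu_\io$. First I would fix $\varepsilon, M>0$ and observe that, after rescaling by $S_{-\log|\lambda_1(\jo|_\ell)|}$, the measure $\tilde\Pi(\bmu\times\bmu_\ko)_{F_\ell^\theta(\jo,\ko)}$ is, up to a bounded horizontal translation and a bounded rescaling, a product of the form $(\pi_\mathtt{x}\mu)^{I_\ell}\times S_{s_\ell}\mu_{\sigma^{?}\ko}$ with $s_\ell$ roughly in the range $[0,\theta]$ (this is where the choice of $\tau_\ell$ enters: it makes the rescaled rectangle have eccentricity $\approx 1$, so both factors have been zoomed to unit scale). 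More precisely I would argue, exactly as in the proof of Proposition \ref{prop-productstructure} but now in the two-variable symbolic space and using the slice structure ${\rm supp}\,\bmu_\io\subseteq\Gamma_\io$, that for $\bmu\times\bmu$-a.e.\ $(\jo,\ko)$ and a density-$(1-\varepsilon)$ set of $\ell$, the rescaled measure is $\dlp$-close to $\varphi_{\jo|_\ell}(\pi_\mathtt{x}\mu)$-translated copies of $\pi_\mathtt{x}\mu\times S_{s_\ell}\mu_{\sigma^{\tau_\ell}\ko}$, where $s_\ell=\log|\lambda_2(\ko|_{\tau_\ell})|-\log|\lambda_1(\jo|_\ell)|+\theta\in[0,-\log\min_i|\lambda_2(i)|]$.

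Next I would invoke the suspension-flow machinery. The natural object is the suspension $Z=\{(\io,t):0\le t\le -\log|\lambda_2(i_0)|\}$ with the roof function recording vertical contraction, and the map $G(\io,t)=\pi_\mathtt{x}\mu\times S_t\mu_\io$; the sequence of ``vertical times'' $\tau_\ell$ corresponds to iterating a return map on $Z$. Using Proposition \ref{prop-suspensionergodic} (the irrationality hypothesis supplies a periodic point of irrational period, via $\frac{\log|\lambda_s(i)|}{\log|\lambda_t(j)|}\notin\Q$), together with Birkhoff's ergodic theorem — the same scheme as Lemma \ref{lemma-equidistribution} — the empirical distribution of the rescaled slices $(\sigma^{\tau_\ell}\ko, s_\ell)$ along $\ell=1,\dots,m$ converges, for $\bmu$-a.e.\ $\ko$, to $(\bmu\times\mathcal L)_Z$; the dependence on $\theta$ only shifts the fiber coordinate $t$ by the constant $\theta$, so the limiting distribution of the \emph{base} point $\sigma^{\tau_\ell}\ko$ is $\theta$-independent, while the fiber coordinate stays in a bounded window. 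Averaging over $\ell$ and using concavity of entropy (Lemma \ref{lemma-concavityofentropy}) together with continuity (Lemma \ref{lemma-continuityofentropy}) to pass between $\tilde H_n$ and $H_n$ and to discard translations, the left-hand average is bounded below (up to $O(\varepsilon)$) by $\frac1n\int H_n(\pi_0(\pi_\mathtt{x}\mu\times S_t\mu_\io))\,d(\bmu\times\mathcal L)_Z(\io,t)$.

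Finally I would quote Proposition \ref{prop-productprojections} itself: it gives, for $\bmu$-a.e.\ $\io$ and every $\theta$ in a fixed interval, $\frac1n H_n(\pi_\theta(\pi_\mathtt{x}\mu\times\mu_\io))\ge\min\{1,\dimh\mu\}-\varepsilon$ for all large $n$; rewriting $\pi_0(\pi_\mathtt{x}\mu\times S_t\mu_\io)=$ a rescaling of $\pi_t(\pi_\mathtt{x}\mu\times\mu_\io)$ (via the identity $\pi_0(x,S_ty)=S_?\circ\pi_t(x,y)$ used already in \eqref{eq-piidentity}), and using Egorov to get a single large $n$ and a single good set $U$ of measure $\ge1-\delta$ working simultaneously for all $\io$ in it, all $t$ in the bounded window and all $\theta\in[0,M]$, the integrand is $\ge\min\{1,\dimh\mu\}-2\varepsilon$ off a set of small $(\bmu\times\mathcal L)_Z$-measure. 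Choosing $m$ large enough that the Birkhoff averages are within $\varepsilon$ of their limit for $(\jo,\ko)$ outside a $\delta$-set, and absorbing the exceptional $\ell$'s (those outside the density-$(1-\varepsilon)$ set, or where the slice lands in the bad set) into the error since the entropy is always $\ge0$, yields the claim. The main obstacle is the middle step: setting up the two-variable analogue of Proposition \ref{prop-productstructure} so that the rescaled slices genuinely equidistribute according to an honest suspension flow \emph{uniformly in} $\theta$ — in particular checking that the $\theta$-shift really does leave the base dynamics untouched and only translates the (bounded) fiber coordinate, and that the exceptional set $U$ and the integer $m$ can be chosen independently of $\theta\in[0,M]$; this is exactly the point flagged in the remark after the proof of Theorem \ref{theorem-main} that uniformity over $\theta$ is essential and delicate.
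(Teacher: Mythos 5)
Your plan has the right general shape (exact/approximate product structure of the rescaled pieces, a suspension over the vertical contraction, equidistribution, then a projection theorem), but it contains a genuine circularity and two gaps at exactly the points where the work lies. First, you close the argument by ``quoting Proposition \ref{prop-productprojections} itself''. That is circular: in the paper Proposition \ref{prop-productprojections} is deduced \emph{from} Proposition \ref{prop-localvariant}, and you offer no independent proof of it. The non-circular ingredient is much weaker and this is the key insight you are missing: since the limiting distribution on $Z$ has Lebesgue measure along the fiber coordinate $t$, it suffices to know the entropy bound for $(\bmu\times\mathcal{L})_Z$-almost every $(\io,t)$, and this follows from Marstrand's projection theorem (an a.e.-$\theta$ statement) combined with the Feng--Hu dimension conservation (Theorem \ref{dimensionconservation}), the inequality $\dimh\nu\le\liminf_n\frac1nH_n(\nu)$, and Egorov. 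Uniformity in $\theta$ is then \emph{manufactured afterwards} by ergodic averaging, not assumed at the pointwise level.

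Second, your equidistribution step is stated ``for $\bmu$-a.e.\ $\ko$'' via ``the same scheme as Lemma \ref{lemma-equidistribution}'', but the time increments here are $-\log|\lambda_1(j_\ell)|$, which depend on the \emph{other} coordinate $\jo$ and are not constant; the relevant system is the skew product $(\jo,(\ko,t))\mapsto(\sigma\jo,(\ko,t-\log|\lambda_1(\jo|_1)|))$ on $\Gamma^\N\times Z$, whose ergodicity does not follow from Proposition \ref{prop-suspensionergodic} alone — the paper needs Kakutani's random ergodic theorem on top of it (Claim \ref{claim-equidistribution}). Third, the uniformity over $\theta\in[0,M]$, which you yourself flag as the main obstacle, is not something Egorov can deliver (``all $t$ in the bounded window and all $\theta\in[0,M]$'' is not a conclusion of Egorov, which only exempts a small-measure set of starting points): the actual mechanism is to discretize $[0,M]$ in steps $\sqrt\delta$, use flow-invariance and Markov's inequality to produce, for every $(\jo,\ko)$ in a large set, a good fiber time within $2^{-n}$ of any prescribed $\theta$, and then transfer by the entropy continuity Lemma \ref{lemma-continuityofentropy}; without this (or an equivalent) device the statement you need for every $\theta$ simply does not follow from a.e.\ statements. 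A minor further point: the product structure of $S_{-\log|\lambda_1(\jo|_\ell)|}\tilde\Pi(\bmu\times\bmu_\ko)_{F_\ell^\theta(\jo,\ko)}$ is \emph{exact} (the first marginal is a full translated copy of $\pi_\mathtt{x}\mu$, not a restriction $(\pi_\mathtt{x}\mu)^{I_\ell}$), because $F_\ell^\theta$ is a product of cylinders and the conditional measures satisfy the dynamical self-similarity $(\bmu_\io)_{[i]}=(\bmu_{\sigma\io}\circ\sigma)_{[i]}$; your proposed density-point/$\dlp$-approximation detour is unnecessary and would also create trouble, since entropy is not continuous in $\dlp$ without the smoothing $\tilde H_n$.
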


\begin{proof}[Proof of Proposition \ref{prop-localvariant}]
For every $\theta$, we have the following description of the magnifications of the measure $\bmu\times\bmu_\io$. 

\begin{claim}\label{productrepresentation}
For $\bmu$-almost every $\io\in\Gamma^\N$, for $\bmu\times \bmu_\io$-almost every $(\jo,\ko)\in\Gamma^\N\times \Gamma_\io$ and for every $\ell\in \N$, there exists a translation $T_\ell$ such that 
    $$
S_{-\log |\lambda_1(\jo|_\ell)|} T_\ell \tilde{\Pi} (\bmu \times \bmu_\io)_{F_\ell^\theta(\jo,\ko)} =  \pi_{\mathtt{x}}\mu \times S_{\log |\lambda_2(\ko|_{\tau_\ell})|-\log |\lambda_1(\jo|_\ell)|}\mu_{\sigma^{\tau_\ell}\ko}.
$$
Here $\log |\lambda_2(\ko|_{\tau_\ell})| - \log |\lambda_1(\jo|_\ell)| \in [-\theta, -\theta+ \max_{i\in\Gamma} \log |\lambda_2(i)|^{-1}].$
\end{claim}
See Figure \ref{fig:productmeasure}.
\begin{figure}[H]
    \begin{tikzpicture}
    \draw (0.3,0) -- (6.3,0) -- (6.3, 2) -- (0.3, 2) -- (0.3,0);
    \draw (3.3, 1) node{$ T_\ell S_{-\log |\lambda_1(\jo|_\ell)|}\tilde{\Pi} (\bmu \times \bmu_\io)_{F_\ell^\theta(\jo,\ko)}$};
    \draw[decoration={brace,raise=5pt},decorate] (0,0) --node[left = 6pt]{$\approx S_{-\theta}\mu_{\sigma^{\tau_\ell}\ko}$} (0,2);
    \draw[decoration={brace,mirror,raise=5pt},decorate] (0.3,0) --node[below = 6pt]{$\pi_\mathtt{x}\mu$} (6.3,0);
\end{tikzpicture}
    \caption{The statement of Claim \ref{productrepresentation}. The measure $S_{-\log |\lambda_1(\jo|_\ell)|}T_\ell\tilde{\Pi} (\bmu \times \bmu_\io)_{F_\ell^\theta(\jo,\ko)}$ is a product of $\pi_\mathtt{x}\mu$ and $S_{\log |\lambda_2(\ko|_{\tau_\ell})|-\log |\lambda_1(\jo|_\ell)|}\mu_{\sigma^{\tau_\ell}\ko} \approx S_{-\theta}\mu_{\sigma^{\tau_\ell}\ko}$.}
    \label{fig:productmeasure}
\end{figure}
\begin{proof}[Proof of Claim]
The measures $\bmu_\io$ enjoy a form of dynamical self-similarity, namely
\begin{equation}\label{eq-dynamicalselfsimilarity}
(\bmu_\io)_{[i]} = (\bmu_{\sigma\io} \circ \sigma)_{[i]}
\end{equation}
for any $i\in\Gamma$ and $\bmu$-almost every $\io \in\Pi^{-1}(\pi_{\mathtt{x}}^{-1}(\pi_{\mathtt{x}}(\Pi([i]))))$. This follows from the equality
\begin{align*}
\bmu &= \sum_{i\in\Gamma} (\bmu\circ\sigma)|_{[i]} = \sum_{i\in\Gamma} \int (\bmu_\io\circ \sigma)|_{[i]}\,d\bmu(\io) \\
& \hspace{2cm}= \sum_{i\in\Gamma} \int (\bmu_\io\circ \sigma)|_{[i]}\,d\sigma\bmu(\io) = \sum_{i\in\Gamma} \int (\bmu_{\sigma\io}\circ \sigma)|_{[i]}\,d\bmu(\io)
\end{align*}
together with the uniqueness of disintegration. Taking also into account that we have $\sigma((\Pi^{-1}(A) \cap [i]) = \Pi^{-1}(\varphi_i^{-1}(A))$ for any $A\subseteq \R^2$, \eqref{eq-dynamicalselfsimilarity} asserts that
\begin{equation}\label{claimeq-1}
\Pi (\bmu_{\io})_{[i]} = (\bmu_{\sigma\io} \circ \sigma)_{[i]}\circ\Pi^{-1} = \varphi_i \Pi\bmu_{\sigma\io} = \varphi_i  (\delta_{\pi_\mathtt{x}\Pi(\sigma\io)} \times \mu_{\sigma \io}).
\end{equation}
On the other hand, by self-similarity of $\pi_\mathtt{x}\mu$ we have 
\begin{equation}\label{claimeq-2}
\Pi\bmu_{[i]} = \varphi_i \mu.
\end{equation}
Since $\tilde{\Pi} (\bmu \times \bmu_\io)_{F_\ell^\theta(\jo,\ko)} = \pi_\mathtt{x}\Pi \bmu_{[\jo|_n]}\times\pi_{\mathtt{y}}\Pi (\bmu_\io)_{[\ko|_{\tau_\ell}]}$, the first statement follows by combining \eqref{claimeq-1} and \eqref{claimeq-2}. The second is immediate from the definition of $\tau_\ell$.
\end{proof}

Recall that $\pi_\theta(x,y) = x + 2^\theta y$. By Claim \ref{productrepresentation}, we have 
\begin{align}\label{eq-projection}
&\pi_\theta S_{-\log |\lambda_1(\jo|_\ell)|} T_\ell\tilde{\Pi} (\bmu \times \bmu_\io)_{F_\ell^\theta(\jo,\ko)} \nonumber\\
=\ &\pi_0 ( \pi_{\mathtt{x}}\mu \times S_{\log |\lambda_2(\ko|_{\tau_\ell})|-\log |\lambda_1(\jo|_\ell)|+\theta}\mu_{\sigma^{\tau_\ell}\ko}).
\end{align}
Next we want to show that under the irrationality assumption of Theorem \ref{theorem-main}, the sequence 
$$
(\sigma^{\tau_\ell}\ko, \log|\lambda_2(\ko|_{\tau_\ell})|- \log |\lambda_1(\jo|_{\ell})|+\theta)_{\ell\in\N}.
$$
equidistributes for a measure absolutely continuous with respect to $\bmu\times\mathcal{L}$. This allows us to eventually bound the dimension of \eqref{eq-projection} using Marstrand's projection theorem. To establish this equidistribution, we define the set
$$
Z = \lbrace (\io, t):\ \io\in\Gamma^\N, 0\leq t \leq -\log |\lambda_2(\io|_1)|\rbrace
$$
equipped with the identification $(\io, -\log|\lambda_2(\io|_1)|) = (\sigma\io, 0)$. The reason for choosing such an identification is that it gives the equality 
\begin{equation}\label{eq-identification}
(\ko, \theta- \log |\lambda_1(\jo|_\ell)|) = (\sigma^{\tau_\ell}\ko,  \log |\lambda_2(\ko|_{\tau_\ell})| - \log |\lambda_1(\jo|_\ell)|+\theta)
\end{equation}
for every $\jo,\ko$ and $\ell$, by the definition of $\tau_\ell$. Let $(\mathcal{T}_s)_{s\geq 0}$ denote the action of non-negative reals on $Z$ given by $\mathcal{T}_s(\io,t) = (\io, t+s)$, and recall that $(\bmu\times \mathcal{L})_Z$ is invariant under $(\mathcal{T}_s)_{s\geq 0}$. 

\begin{claim}\label{claim-equidistribution}
Under the irrationality assumption of Theorem \ref{theorem-main}, the measure $\bmu\times(\bmu\times\mathcal{L})_Z$ on $\Gamma^\N\times Z$ is ergodic under the map $\sigma^*: (\jo, (\ko, t))\mapsto (\sigma \jo, (\ko, t- \log |\lambda_1(\jo|_1)|))$. 
\end{claim}
\begin{proof}[Proof of Claim]
By the assumption, there exists a pair $(i,j)\in\Gamma$ such that $\frac{\log|\lambda_2(i)|}{\log|\lambda_1(j)|}\not\in \Q$. The point $((i,i,i,\ldots), 0)\in Z$ is periodic with period $-\log|\lambda_2(i)|$, so by Proposition \ref{prop-suspensionergodic}, the measure $(\bmu\times\mathcal{L})_{Z}$ is ergodic under the map $\mathcal{T}_{-\log|\lambda_1(j)|}$. It now follows from \cite[Theorem 3]{Kakutani1951} that the skew-product system $(\Gamma^\N \times Z, \sigma^*, \bmu\times (\bmu\times\mathcal{L})_Z)$ is ergodic. 
\end{proof}

Define the function $G: Z \to \mathcal{P}(\R^2)$, 
$$
G(\io,t) = \pi_0(\pi_{\mathtt{x}}\mu \times S_t\mu_\io)
$$
and note that by \eqref{eq-projection}, \eqref{eq-identification} and Claim \ref{productrepresentation},
\begin{align}\label{eq-Gproperty}
G(\ko, \theta - \log |\lambda_1(\jo|_\ell)|) &= \pi_0(\pi_{\mathtt{x}}\mu \times S_{\log |\lambda_2(\ko|_{\tau_\ell})|- \log |\lambda_1(\jo|_\ell)|+\theta}\mu_{\sigma^{\tau_\ell}\ko}) \nonumber\\
&= \pi_\theta S_{-\log|\lambda_1(\jo|_{\ell})|} T_\ell \tilde{\Pi} (\bmu \times \bmu_{\ko})_{F_\ell^\theta(\jo,\ko)}.
\end{align}
By Marstrand's projection theorem for measures, Theorem \ref{dimensionconservation} and Fubini, we have for $(\bmu\times\mathcal{L})_Z$-almost every $(\io,t)$ that 
$$
\dimh\pi_0(\pi_{\mathtt{x}}\mu\times S_t \mu_\io) = \min\lbrace 1,\dimh\mu\rbrace.
$$
Let $\varepsilon>0$. Combining the well-known fact that $\dimh\nu \leq \liminf_{n\to\infty}\frac{1}{n}H_n(\nu)$ for any Borel probability measure $\nu$ with Egorov's theorem, we find a set $F\subseteq Z$ with $(\bmu\times \mathcal{L})_{Z}(F)>1-\varepsilon$ and an integer $N = N(\varepsilon) \in\N$ such that for all $(\io,t)\in F$ and $n\geq N$, 
\begin{equation}\label{eq-marstrand}
\frac{1}{n} H_n(\pi_0 (\pi_{\mathtt{x}}\mu\times S_t \mu_\io)) \geq \min \lbrace 1,\dimh\mu\rbrace - \varepsilon.
\end{equation}
 Thus, by Claim \ref{claim-equidistribution}, Birkhoff's ergodic theorem and Egorov's theorem, for $\delta>0$ small enough with respect to $\varepsilon$ there exists $m(\delta)\in\N$ and a set $V\subseteq \Gamma^\N\times Z$ such that $\bmu\times(\bmu\times\mathcal{L})_Z(V)\geq 1-\delta$ and 
\begin{equation}\label{eq-entropybound}
\frac{1}{m}\sum_{\ell=1}^m \frac{1}{n} H_n(G(\ko, t-\log|\lambda_1(\jo|_\ell)|)) \geq \min\lbrace 1,\dimh\mu\rbrace-\varepsilon
\end{equation}
for every $(\jo, (\ko, t))\in V$ and $m\geq m(\delta)$. The remainder of the proof is dedicated to showing that \eqref{eq-entropybound} in fact holds for every $t\in[0,M]$, where $M>0$ is as in the statement of the proposition. 

Let $(\mathcal{T}_s)_{s\geq 0}$ also denote the action of non-negative reals on $\Gamma^\N \times Z$ given by $\mathcal{T}_s(\jo, (\ko, t)) = (\jo, (\ko,t+s))$. Then $\bmu \times (\bmu\times\mathcal{L})_Z$ is invariant under $(\mathcal{T}_s)_{s\geq0}$, in particular, $\bmu\times(\bmu\times\mathcal{L})_Z(\bigcap_{\ell=1}^{\lfloor M/\sqrt{\delta}\rfloor} \mathcal{T}_{\ell\sqrt{\delta}}^{-1}V) \geq 1-M\sqrt{\delta} =: 1-\delta'$, so by Markov's inequality, there exists a set $V_{\delta}\subseteq \Gamma^\N\times \Gamma^\N$ such that $\bmu\times\bmu(V_{\delta})\geq 1-\sqrt{\delta'}$ and for every $(\jo,\ko)\in V_{\delta}$, 
$$
\mathcal{L}(\lbrace t\in [0,-\log|\lambda_2(\ko|_1)):\ (\jo,(\ko, t))\in \bigcap_{\ell=1}^{\lfloor M/\sqrt{\delta}\rfloor} \mathcal{T}_{\ell\sqrt{\delta}}^{-1}V\rbrace) \geq (1-\sqrt{\delta'}) \log|\lambda_2(\ko|_1)|^{-1}.
$$
Let now $\theta\in[0,M]$, and let $k$ be such that $k\sqrt{\delta} \leq \theta \leq (k+1)\sqrt{\delta}$. If $\delta$ is small enough with respect to $n$, then for every $(\jo,\ko)\in V_{\delta}$ there exists $0<t(\jo,\ko)<2^{-n}$ such that $(\jo,(\ko,t(\jo,\ko)))\in \mathcal{T}_{k\sqrt{\delta}}^{-1}V$, that is, \eqref{eq-entropybound} holds with $t=k\sqrt{\delta} + t(\jo,\ko) \in [\theta-2^{-n}, \theta+2^{-n}]$. By Lemma \ref{lemma-continuityofentropy},
$$
|H_n(G(\ko, k\sqrt{\delta} + t(\jo,\ko))) - H_n(G(\ko, \theta))| \leq 1,
$$
so \eqref{eq-entropybound} holds also for $t = \theta$ by adding another $\varepsilon$ to the right-hand side. In particular, applying \eqref{eq-Gproperty} to \eqref{eq-entropybound} with $t=\theta$ and $(\jo,\ko) \in V_\delta$, we obtain the statement with $U := V_\delta$.
\end{proof}

\begin{proof}[Proof of Proposition \ref{prop-productprojections}]
Let $\varepsilon,M>0$, let $n\in\N$ be large, let $\delta>0$ be small and let $m(\delta)\in\N$ be the integer and $U\subseteq\Gamma^\N\times\Gamma^\N$ the set given by Proposition \ref{prop-localvariant} with $\bmu\times\bmu(U)\geq 1-\delta$. Disintegrating $\bmu\times\bmu = \int \bmu\times\bmu_\io\,d\bmu(\io)$ and applying Markov's inequality, we find a set $X\subseteq\Gamma^\N$ with $\bmu(X)\geq 1-\sqrt{\delta}$ such that for every $\io\in X$, $\bmu\times\bmu_\io(U)\geq 1-\sqrt{\delta}$. Letting eventually $\delta\to 0$ along a countable sequence, it suffices to prove the statement of the proposition for $\io\in X$. 

Let $\theta \in [0,M]$, $\io\in X$ and $m\geq m(\delta)$. Similarly as in the proof of \cite[Lemma 3.4]{Hochman2014}, we will bound the entropy of $\pi_\theta(\pi_{\mathtt{x}}\mu\times\mu_\io)$ from below by the average of the entropies of the components $\pi_\theta \tilde{\Pi}(\bmu \times \bmu_\io)_{F_\ell^\theta(\jo)}$, $\jo\in\Gamma^\N\times\Gamma_\io$. Using the estimate 
$$
|H(\nu, \mathcal{D}_{m+p}|\mathcal{D}_p) - H_m(\nu)| \leq O(p)
$$
which holds for any integer $p$ and probability measure $\nu$ by Lemma \ref{lemma-continuityofentropy}, and the chain rule, we have 
    \begin{align}\label{eq-chainruleapplication}
        n H_{nm}(\pi_\theta(\pi_{\mathtt{x}}\mu\times\mu_\io)) &\geq \sum_{p=0}^n H(\pi_\theta (\pi_{\mathtt{x}}\mu\times\mu_\io), \mathcal{D}_{nm+p}|\mathcal{D}_p) - O(n^2) \nonumber \\
        &= \sum_{\ell=0}^{nm} H(\pi_\theta(\pi_{\mathtt{x}}\mu\times\mu_\io), \mathcal{D}_{\ell+n}|\mathcal{D}_{\ell}) - O(n^2)
    \end{align}
for every $\ell\in\N$. For $\jo\in\Gamma^\N$ and $\ell\in\N$, define the stopping time 
$$
\kappa(\jo,\ell) = \min \lbrace n:\ \lambda_1(\jo|_n) \leq 2^{-\ell}\rbrace
$$
so that $\pi_\theta \tilde{\Pi}(F_{\kappa(\jo,\ell)}(\jo,\ko)) \subset \R$ is an interval of length at most $2^{-\ell+1}$. Disintegrate $\bmu \times \bmu_\io = \int (\bmu \times \bmu_\io)_{F_{\kappa(\jo,\ell)}(\jo,\ko)}\,d\bmu \times \bmu_\io(\jo,\ko)$ for each $\ell$ and recall that $\pi_\mathtt{x}\mu \times \mu_\io = \tilde{\Pi}(\bmu \times \bmu_\io)$. Using \eqref{eq-chainruleapplication} and concavity of entropy, we have
    \begin{align}\label{eq-chainrule+concavity}
        &H_{nm}(\pi_\theta(\pi_{\mathtt{x}}\mu\times\mu_\io)) \nonumber\\
        \geq\ &\sum_{\ell=1}^{nm} \frac{1}{n}H(\pi_\theta(\pi_{\mathtt{x}}\mu\times\mu_\io), \mathcal{D}_{\ell+n}|\mathcal{D}_{\ell}) - O(n) \nonumber\\
        =\ &\sum_{\ell=1}^{nm}\frac{1}{n} H\left(\int\pi_\theta \tilde{\Pi}(\bmu \times \bmu_\io)_{F_{\kappa(\jo,\ell)}(\jo,\ko)}\,d\bmu \times \bmu_\io(\jo,\ko), \mathcal{D}_{\ell+n}|\mathcal{D}_{\ell}\right) - O(n)\nonumber\\
        \geq\ &\int \sum_{\ell=1}^{nm} \frac{1}{n}H(\pi_\theta\tilde{\Pi}(\bmu \times \bmu_\io)_{F_{\kappa(\jo,\ell)}(\jo,\ko)}, \mathcal{D}_{\ell+n}|\mathcal{D}_{\ell}) \,d\bmu \times \bmu_\io(\jo,\ko)- O(n).
    \end{align}
Here the support of $\pi_\theta \tilde{\Pi} (\bmu \times \bmu_\io)_{F_{\kappa(\jo,\ell)}(\jo,\ko)}$ may intersect at most $2$ intervals of $\mathcal{D}_\ell$ for each $\ell=1,\ldots, nm$. On the other hand, $\lambda_1(\jo|_{{\kappa(\jo,\ell)}})$ is comparable to $2^{-\ell}$ by the definition of $\kappa(\jo,\ell)$, so by introducing an error $O(1/n)$ to each term in the sum, \eqref{eq-chainrule+concavity} is bounded from below by
    \begin{align}\label{eq-lowerbound}
        \int \sum_{\ell=1}^{nm} \frac{1}{n}H_n( \pi_\theta S_{-\log|\lambda_1(\jo|_{{\kappa(\jo,\ell)}})|} \tilde{\Pi} (\bmu \times \bmu_\io)_{F_{\kappa(\jo,\ell)}(\jo,\ko)})\,d\bmu \times \bmu_\io(\jo,\ko)& \\  & \hspace{-4cm} - O(n+m) \nonumber.
    \end{align}
For $n$ and $m$ are large enough so that $\frac{n+m}{nm}\leq \varepsilon$, inserting the lower bound \eqref{eq-lowerbound} into \eqref{eq-chainrule+concavity} yields that 
\begin{align}\label{eq-lowerbound2}
    &\frac{1}{nm} H_{nm} (\pi_\theta(\pi_{\mathtt{x}}\mu\times\mu_\io)) \nonumber\\
    \geq\ &\int \sum_{\ell=1}^{nm} \frac{1}{n}H_n( \pi_\theta S_{-\log|\lambda_1(\jo|_{{\kappa(\jo,\ell)}})|} \tilde{\Pi} (\bmu \times \bmu_\io)_{F_{\kappa(\jo,\ell)}(\jo,\ko)})\,d\bmu \times \bmu_\io(\jo,\ko) - O(\varepsilon).
\end{align}
Since 
$$
\min_{i\in\Gamma}\log(|\lambda_1(i)|^{-1}) \leq \frac{\ell}{\kappa(\jo,\ell)}\leq \max_{i\in\Gamma}\log(|\lambda_1(i)|^{-1})
$$
for every $\jo\in\Gamma^\N$ and $\ell\in\N$, Proposition \ref{prop-localvariant} states that
$$
\frac{1}{nm}\sum_{\ell=1}^{nm} \frac{1}{n}H_n( \pi_\theta S_{-\log|\lambda_1(\jo|_{{\kappa(\jo,\ell)}})|} \tilde{\Pi} (\bmu \times \bmu_\io)_{F_{\kappa(\jo,\ell)}(\jo,\ko)}) \geq \min\lbrace 1,\dimh\mu\rbrace - O(\varepsilon)
$$
for every $(\jo,\ko)\in U\cap (\Gamma^\N\times\Gamma_\io)$ (recall that $U$ is the set given by Proposition \ref{prop-localvariant}). Here  the implicit constant in $O(\varepsilon)$ depends only on the ratio of $\kappa(\jo,\ell)$ and $\ell$ which in turn depends only on the numbers $\lambda_1(i)$ and $\lambda_2(i)$. Plugging this into \eqref{eq-lowerbound2} and recalling that $\bmu\times\bmu_\io(U) \geq 1-\sqrt{\delta}$ since $\io\in X$, we obtain that
$$
\frac{1}{nm}H_{nm}(\pi_\theta(\pi_{\mathtt{x}}\mu\times\mu_\io))\geq \min\lbrace 1,\dimh\mu\rbrace -O(\varepsilon)
$$
if $\delta$ is small enough with respect to $\varepsilon$. Setting $N_0= nm$, this is what we wanted to prove.
\end{proof}

\bibliographystyle{abbrv}
\bibliography{Bibliography}

\end{document}